\newtheorem{theorem}{Theorem}[section]
\newcounter{citedtheorems}
\newtheorem{defn}{Definition}[section]
\newtheorem{theorem-m1}[defn]{Main Theorem}
\newtheorem{theorem-c1}[defn]{Central Theorem}
\newtheorem*{theorem-x}{Theorem}
\newtheorem*{theorem-m}{Main Theorem}
\newtheorem*{theorem-n}{Main Theorem}
\newtheorem*{cor-x}{Corollary}
\newtheorem*{lemma-x}{Lemma}
\newtheorem*{concl-x}{Conclusion}
\newtheorem*{claim-x}{Claim}
\newtheorem*{thm-r}{Theorem \ref{concl:sop2-max}}
\newtheorem*{thm-q}{Theorem \ref{theorem:p-t}}
\newtheorem*{claim-s}{Claim \ref{m1-sat}}
\newtheorem{prob}{Problem}
\newtheorem{red}[defn]{Reduction}
\newtheorem{thm-lit}[citedtheorems]{Theorem}
\newtheorem{defn-lit}[citedtheorems]{Definition}
\newtheorem{fact}[defn]{Fact}
\newtheorem{cor}[defn]{Corollary}
\newtheorem{concl}[defn]{Conclusion}
\newtheorem{conv}[defn]{Convention}
\newtheorem{conv-r}[defn]{Conventions and Remarks}
\newtheorem{claim}[defn]{Claim}
\newtheorem{lemma}[defn]{Lemma}
\newtheorem{obs}[defn]{Observation}
\newtheorem{rmk}[defn]{Remark}
\newtheorem{disc}[defn]{Discussion}
\newtheorem*{scl}{Subclaim}
\newtheorem{conj}[defn]{Conjecture}
\newtheorem{qst}[defn]{Question}
\newcommand{\mb}{\mathbf{b}}
\newcommand{\lost}{\L os' }
\newcommand{\los}{\L os }
\newcommand{\br}{\vspace{2.5mm}}
\newcommand{\bbr}{\vspace{3mm}}
\newcommand{\rstr}{\upharpoonright}
\newcommand{\mci}{\mathcal{I}}
\newcommand{\cis}{\operatorname{cis}}
\newcommand{\hx}{\hspace{3.5mm}}
\newcommand{\hz}{\hspace{1mm}}
\newcommand{\kleq}{\trianglelefteq}
\newcommand{\ml}{\mathcal{L}}
\newcommand{\mch}{\mathcal{H}}
\newcommand{\tlf}{\trianglelefteq}
\newcommand{\step}{\vspace{3mm}\noindent\emph}
\newcommand{\lp}{\emph{(}}
\newcommand{\rp}{\emph{)}}
\newcommand{\mct}{\mathcal{T}}
\newcommand{\ord}{\operatorname{Or}}
\newcommand{\tr}{\operatorname{Tr}}
\newcommand{\cts}{\mc^{\mathrm{ct}}(\cs)}
\newcommand{\fun}{\operatorname{fun}}
\newcommand{\de}{\mathcal{D}}
\newcommand{\eff}{\mathcal{F}}
\newcommand{\fss}{{\mathcal{P}}_{\aleph_0}}
\newcommand{\Los}{\L o\'s }
\newcommand{\uu}{{W}}
\newcommand{\trg}{{T_{rg}}}
\newcommand{\tfeq}{{T_{feq}}}
\newcommand{\rn}{\operatorname{range}}
\newcommand{\jj}{\mathbf{j}}
\newcommand{\mc}{\mathcal{C}}
\newcommand{\pr}{\operatorname{Pr}}
\newcommand{\vp}{\varphi}
\newcommand{\lcf}{\operatorname{lcf}}
\newcommand{\cf}{\operatorname{cf}}
\newcommand{\bn}{\mathbf{n}}
\newcommand{\mcp}{\mathcal{P}}
\newcommand{\ma}{\mathbf{a}}
\newcommand{\dom}{\operatorname{dom}}
\newcommand{\ts}{T_{SOP_2}}
\newcommand{\bij}{\operatorname{Par}} 
\newcommand{\maxdom}{\max (\dom}
\newcommand{\xp}{\mathfrak{p}}
\newcommand{\xt}{\mathfrak{t}}
\newcommand{\xb}{\mathfrak{b}}
\newcommand{\xh}{\mathfrak{h}}
\newcommand{\mg}{\mathbf{G}}
\newcommand{\vv}{\mathbf{V}}
\newcommand{\cn}{\mathcal{N}}
\newcommand{\bq}{\mathbf{Q}}
\newcommand{\bc}{\mathbf{c}}
\newcommand{\bd}{\mathbf{d}}
\newcommand{\lls}{<^*}
\newcommand{\cs}{\mathbf{s}}
\newcommand{\tc}{\mathbf{c}}
\newcommand{\lgn}{\operatorname{lg}}
\newcommand{\xr}{\operatorname{val}}
\newcommand{\nt}{\operatorname{Int}}
\def\mathunderaccent#1#2 {\let\theaccent#1\skewfactor#2
\mathpalette\putaccentunder}
\def\putaccentunder#1#2{\oalign{$#1#2$\crcr\hidewidth
\vbox to.2ex{\hbox{$#1\skew\skewfactor\theaccent{}$}\vss}\hidewidth}}
\def\name{\mathunderaccent\tilde-3 }
\theoremstyle{definition}
\theoremstyle{remark}
\numberwithin{equation}{section}
\begin{document}

\title[Cofinality spectrum theorems]{Cofinality spectrum theorems in model theory, set theory and general topology}


\author{M. Malliaris and S. Shelah}\thanks{Malliaris was partially supported by NSF DMS-1001666,  
Shelah's NSF grant DMS-1101597, a Godel research fellowship, and a Sloan fellowship. 
Shelah was partially supported by the Israel Science Foundation grants 710/07 and 1053/11.
This is paper 998 in Shelah's list of publications.}

\address{Department of Mathematics, University of Chicago, 5734 S. University Avenue, Chicago, IL 60637, USA}
\email{mem@math.uchicago.edu}

\author{S. Shelah}
\address{Einstein Institute of Mathematics, Edmond J. Safra Campus, Givat Ram, The Hebrew
University of Jerusalem, Jerusalem, 91904, Israel, and Department of Mathematics,
Hill Center - Busch Campus, Rutgers, The State University of New Jersey, 110
Frelinghuysen Road, Piscataway, NJ 08854-8019 USA}
\curraddr{}
\email{shelah@math.huji.ac.il}

\subjclass[2010]{Primary 03C20, 03C45, 03E17}


\dedicatory{}

\begin{abstract}
We connect and solve two longstanding open problems in quite different areas: the model-theoretic question of 
whether $SOP_2$ is maximal in Keisler's order, 
and the question from general topology/set theory of whether 
$\xp = \xt$, the oldest problem on cardinal invariants of the continuum. 
We do so by showing these problems can be translated into instances of 
a more fundamental problem which we state and solve completely, using model-theoretic methods. 
\end{abstract}

\maketitle


\tableofcontents
\newpage

\section{Introduction}

We connect and solve two fundamental open problems in quite different areas: 
the model-theoretic problem of the maximality of $SOP_2$ in Keisler's order, 
and the problem from general topology/set theory of whether $\xp = \xt$, as well as some natural set-theoretic
questions about cuts in regular ultrapowers of linear order. 

Let us begin with the simpler of the two problems to state: whether $\xp = \xt$.
Cantor proved in 1874 that the continuum is uncountable, i.e. $\aleph_0 < 2^{\aleph_0}$ \cite{cantor}. 
The study of cardinal invariants or characteristics of the continuum illuminates this gap by 
studying connections between cardinals measuring the continuum which arise from different perspectives: 
combinatorics, algebra, topology, and measure theory. 
Although there are many cardinal invariants and many open questions about them (see e.g. 
the surveys of van Douwen \cite{douwen}, Vaughan \cite{vaughan}, and Blass \cite{blass}), 
the problem of whether $\xp = \xt$ is the oldest and so holds a place of honor. (Moreover, usually if such an equality was 
not obviously true it was consistently false, by forcing.) 

Before reviewing the history, we give the easily stated definition:

\begin{defn} \emph{(see e.g. \cite{douwen})} \label{d:intro}
We define several properties which may hold of a family $D \subseteq [\mathbb{N}]^{\aleph_0}$, i.e. a family of 
infinite sets of natural numbers. 
Let $A \subseteq^* B$ mean that $\{ x : x \in A, ~x \notin B \}$ is finite.

\begin{itemize}
\item $D$ has a \emph{pseudo-intersection} if there is an infinite $A \subseteq \mathbb{N}$ such that
for all $B \in D$, $A \subseteq^* B$.
\item $D$ has the s.f.i.p. $($strong finite intersection property$)$
if every nonempty finite subfamily has infinite intersection.
\item $D$ is
called a tower if it is well ordered by ${\supseteq^*}$ and has no infinite pseudo-intersection.
\end{itemize}

Then: 
\begin{align*}
\xp = & \min \{ |\eff| ~ : ~ \eff \subseteq [\mathbb{N}]^{\aleph_0}~ \mbox{has the s.f.i.p. but has no infinite pseudo-intersection} \} \\
\xt = & \min \{ |\mct| ~ : ~ \mct \subseteq [\mathbb{N}]^{\aleph_0} ~\mbox{is a tower} \} 
\end{align*}
\end{defn}

Clearly, both $\xp$ and $\xt$ are at least $\aleph_0$ and no more than $2^{\aleph_0}$. 
It is easy to see that $\xp \leq \xt$, since a tower has the s.f.i.p. 
By a 1934 theorem of Hausdorff $\aleph_1 \leq \xp$ \cite{hausdorff}. In 1948 \cite{roth-b},
Rothberger proved (in our terminology) that $\xp = \aleph_1$ implies $\xp = \xt$, which begs the 
question of whether $\xp = \xt$.

\br
\begin{prob} \label{prob-2}
Is $\xp = \xt$? 
\end{prob}

Problem \ref{prob-2} appears throughout the literature. 
Van Douwen presents six primary cardinal invariants (of a model of set theory), 
$\mathfrak{a}, \mathfrak{b}, \mathfrak{d}, \mathfrak{p}, \mathfrak{s}$, 
and $\mathfrak{t}$; he attributes $\mathfrak{b}, \mathfrak{p}, \mathfrak{t}$ to Rothberger 1939 and 1948,
$\mathfrak{d}$ to Katetov 1960, $\mathfrak{a}$ to Hechler 1972 and Solomon 1977, and $\mathfrak{s}$ to Booth 1974 
(see \cite{douwen} p. 123). Vaughan \cite{vaughan} Problem 1.1 includes 
the only inequalities about van Douwen's six cardinals which remained open in 1990; it is noted there that
``we believe (a)'' (i.e. whether $\xp < \xt$ is consistent with $ZFC$) ``is the most interesting.''  
Following Shelah's solution of Vaughan's 1.1(b) in \cite{Sh700} (showing it was independent),  
Problem \ref{prob-2} is therefore both the oldest {and} the only remaining open inequality about van Douwen's cardinals. 

There has been much work on $\xp$ and $\xt$, for example:  
Bell \cite{bell} proved that $\xp$ is the first cardinal $\mu$ for which $MA_\mu$($\sigma$-centered) fails; 
in topological language, this asserts that no separable compact Hausdorff space can be covered by 
fewer than $\xp$ nowhere dense sets. 
Szyma\'nski proved that $\xp$ is regular. 
Piotrowski and Szyma\'nski \cite{p-s} proved that $\xt \leq \operatorname{add}(\mathcal{M})$, where $\mathcal{M}$ is the 
ideal of meager sets, and $\operatorname{add}(I)$ denotes the smallest number of sets in an ideal $I$ with union not in $I$.
Shelah proved in \cite{Sh:885} that if $\xp < \xt$ then there is a so-called peculiar cut in
${^\omega \omega}$, see Section \ref{s:p-t}, Theorem \ref{t:885} below; we will leverage this result in the present work.

In \S \ref{s:p-t} of this paper, we apply the methods developed in earlier sections to answer Problem \ref{prob-2}:

\begin{theorem-x} \emph{(Theorem \ref{theorem:p-t})}
$\xp = \xt$.
\end{theorem-x}

In the context of the general framework we build (of cofinality spectrum problems), this answer is natural, but 
it is a priori very surprising. 
Given the length of time this problem had remained open, the expectation was an independence result.

\br

We now describe the second problem, which concerns a criterion for maximality in Keisler's 1967 order on theories. In this discussion, all theories are complete and countable.  
If $\de$ is an ultrafilter on $\lambda$, 
let us say that \emph{$\de$ saturates $M$} if the ultrapower $M^\lambda/\de$ is $\lambda^+$-saturated.\footnote{Saturation is a 
fullness condition. Given a model $M$ 
and $A \subseteq \dom(M)$, the $n$-types over $A$ are the maximal consistent sets of formulas in $n$ free variables with parameters from $A$. 
A type $p(\bar{x})$ is realized in $M$ if there is $\bar{a}$ such that $M \models \vp(\bar{a})$ for each $\vp \in p$, otherwise it is omitted.  
A model is called $\kappa$-saturated if all types, or equivalently all $1$-types, over all subsets of $M$ of size $<\kappa$ are realized.}
When $\de$ is a regular ultrafilter on $\lambda$ 
and $M, N$ are both models of the same complete, countable first-order theory $T$, 
then $\de$ saturates $M$ iff $\de$ saturates $N$, so we may simply say that $\de$ saturates $T$. 

Keisler's order is the relation: 
$T_1 \tlf T_2$  ~ iff ~ for any infinite cardinal $\lambda$ and any regular ultrafilter $\de$ on $\lambda$, if $\de$ saturates $T_2$ then 
$\de$ saturates $T_1$.  
Keisler's order is a pre-order on theories, and a partial order on the equivalence classes. It was recognized early on that 
this order gave a powerful way of comparing the complexity of any two theories, even in different languages. 
Determining the structure of this order is a large-scale, and largely open, 
problem in model theory.   
It is not known, for instance, whether the order is linear, or whether it is absolute, although it has long been 
thought to have finitely many classes and be linearly ordered. 
Its structure on the stable theories is known \cite{Sh:a}, and the first dividing line in Keisler's order 
among the unstable theories has just been discovered \cite{MiSh:999}. 

It has been known since 1967 that the order has a maximum class, whose model-theoretic identity has remained elusive. Around 1963, Keisler proved   
the existence of a strong family of ultrafilters, the \emph{good} regular ultrafilters: see \cite{keisler-1}, or Definition \ref{good-filters} below.\footnote{Keisler's proof 
assumed GCH, which was later eliminated by Kunen \cite{kunen}.} In defining his order, Keisler connected these filters to model theory 
by showing that on one hand, good regular ultrafilters saturate any theory, and on the other, there exist theories complex enough to 
be able to code any failure of goodness as an omitted type in the ultrapower.  In other words, there exist $T$ such that $\de$ saturates $T$ iff $\de$ is good. 
This shows the existence of a maximum class in Keisler's order, which can be characterized set-theoretically as the 
set of complete countable first-order theories which are saturated only by those regular $\de$ which are good. 

A first surprise was that complexity in the sense of coding is not an essential model-theoretic feature of theories in the maximum Keisler class: in 1978, Shelah 
\cite{Sh:a} proved that any theory of linear order, or more precisely with the strict order property, belongs to the maximum class. 
Later, this was weakened to the strong order property $SOP_3$, which retains many features of linear order. For a long time, it was not clear whether to expect 
that any theories \emph{without} the obvious features of linear order might belong to the maximum class. Attention focused on  
$SOP_2$, a kind of maximally inconsistent tree: we say that a theory $T$ has $SOP_2$ if for some formula $\vp(\bar{x},\bar{y})$, we may, in some sufficiently 
saturated model of $T$, find a set of parameters for $\vp$ indexed by the nodes of a binary tree such that the set of instances of $\vp$ along any 
path through the tree is consistent, whereas any two instances assigned to incomparable nodes are inconsistent. This occurs easily in models of linear order, 
but also in the generic triangle-free graph. (These properties are discussed further in \S \ref{s:defns}.) The second major problem this paper addresses is:

\begin{prob} \label{prob-e}
Does $SOP_2$ imply maximality in Keisler's order?
\end{prob}

Problem \ref{prob-e} does not settle the identity of the maximum class; why, then, is it so significant? For one, it moves the possible boundary of the maximum class onto what we 
believe to be a dividing line, in the sense of classification theory. Moreover, 
we believe: 

\begin{conj} \label{conj:a}
$SOP_2$ characterizes maximality in Keisler's order.
\end{conj}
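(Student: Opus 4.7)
My plan is to prove the characterization in two directions, which require very different machinery. The forward implication fits cleanly into the cofinality spectrum framework developed in the paper; the reverse implication appears to require essentially new tools and is the serious obstacle.

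\emph{Forward direction: $SOP_2 \Rightarrow$ maximal.} Suppose $T$ has $SOP_2$, witnessed in some $M \models T$ by a formula $\vp(x;y)$ and a tree $\langle a_\eta : \eta \in {^{\omega>}2} \rangle$. Let $\de$ be a regular ultrafilter on $\lambda$ such that $M^\lambda/\de$ is $\lambda^+$-saturated; I want to show $\de$ is $\lambda^+$-good, which by Keisler's characterization of the maximum class suffices. Working inside a suitable expansion of $M^\lambda/\de$ by Skolem functions coding enough set theory, I would set up a cofinality spectrum problem $\cs$ with $\Delta_\cs$ a collection of $\vp$-definable linear orders obtained from the $SOP_2$-tree. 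A strictly increasing $\lambda$-sequence in a derived tree of $\cs$ without an upper bound would give rise to a finitely satisfiable partial $\vp$-type over a set of size $\lambda$ which is omitted in $M^\lambda/\de$, contradicting $\lambda^+$-saturation; so $\xt_\cs > \lambda$. The main theorem $\mc(\cs, \xt_\cs) = \emptyset$ then forbids $(\kappa,\kappa)$-cuts for $\kappa = \cf(\kappa) \leq \lambda$ in any $\Delta_\cs$-definable linear order, in particular in $(\omega,<)^\lambda/\de$. The theorem promised in the abstract then yields $\lambda^+$-goodness of $\de$.

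\emph{Reverse direction: maximal $\Rightarrow SOP_2$.} Assume $T$ is $NSOP_2$; I want to exhibit, for some $\lambda$, a regular ultrafilter $\de$ on $\lambda$ such that $M^\lambda/\de$ is $\lambda^+$-saturated for $M \models T$ but $\de$ is not $\lambda^+$-good. My strategy would be to start from a regular, non-good $\de_0$ (e.g., one whose associated $(\omega,<)^\lambda/\de_0$ has a symmetric $(\kappa,\kappa)$-cut for some $\kappa = \cf(\kappa) \leq \lambda$) and refine it through a $\lambda^+$-chain of separated/multiplicative extensions along the Keisler--Shelah template, at each stage realizing one more $\vp$-type for a $T$-formula $\vp$. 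The key leverage of $NSOP_2$ is that, for every definable array $\langle a_\eta : \eta \in {^{\omega>}2} \rangle$, some branch's $\vp$-type is already finitely inconsistent; this bounded branching should translate into the combinatorial statement that every consistent $\vp$-type is ``amalgamable'' along a structure weaker than a full moral map, so that a non-good but $T$-moral ultrafilter may be built.

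\emph{Main obstacle.} The forward direction is within reach given the cofinality spectrum machinery; once one identifies the correct derived trees, the argument is essentially formal. The reverse direction is the entire difficulty of the conjecture, and the paper itself isolates it as open (Conjecture \ref{conj:a}). Unlike simplicity or $NSOP_3$, the class $NSOP_2$ does not currently come equipped with a canonical independence calculus, and the translation ``bounded tree-inconsistency $\Rightarrow$ amalgamation of $T$-moral functions'' I sketched above is precisely the point where the known separated-filter technology has no purchase. I would expect any honest attempt at the conjecture to proceed by first extracting, from $NSOP_2$, a new amalgamation-like invariant of types, and then constructing a ``not-fully-good but $T$-moral'' ultrafilter matched to that invariant; developing this invariant is the main conceptual step I would plan around.
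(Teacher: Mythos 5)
The statement you are addressing is stated in the paper as a conjecture, not a theorem: the paper proves only the implication that $SOP_2$ implies maximality (Theorem \ref{concl:sop2-max}), and explicitly leaves the converse open. Your forward direction is, in essence, the paper's own argument: realization of all $SOP_2$-types over sets of size $\lambda$ is shown there to be equivalent to $\de$ having $\lambda^+$-treetops (Claim \ref{equiv-conds}, Conclusion \ref{sop2-concl}), the regular ultrapower is packaged as a cofinality spectrum problem (Claims \ref{up-csp}--\ref{up-csp3}), and Theorem \ref{no-cuts} then gives $\mc(\de)=\emptyset$, hence $\lambda^+$-goodness (Theorem \ref{maximal-uf}), which suffices by Keisler's characterization (Theorem \ref{good-max}). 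One small correction to your sketch: the orders in $\Delta_\cs$ are not ``$\vp$-definable linear orders obtained from the $SOP_2$-tree''; they are pseudofinite orders in an expansion of a model coding enough set theory (e.g.\ expanding $(\omega,<)$), and the $SOP_2$ formula enters only through the treetops equivalence. This is presentational and repairable.

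The genuine gap is the reverse direction, and your proposal does not close it. The passage from $NSOP_2$ to ``every consistent $\vp$-type is amalgamable along a structure weaker than a full moral map'' is not an argument but a restatement of what would need to be discovered: you give no construction of a regular, non-good ultrafilter that $\lambda^+$-saturates an arbitrary $NSOP_2$ theory, and the proposed $\lambda^+$-stage refinement ``realizing one more $\vp$-type at each step while preserving non-goodness'' is exactly the kind of construction that must fail for theories that are in fact maximal (e.g.\ those with $SOP_3$), so $NSOP_2$ would have to be used in an essential, currently unavailable way. Since the paper itself records this direction as open (it is why the statement is Conjecture \ref{conj:a} rather than a theorem), the honest assessment is that your proposal establishes, via the paper's machinery, only one implication of the biconditional; the characterization itself remains unproven, and no amount of reorganization of the treetops/cut analysis in Sections 2--4 will yield it, because that analysis only ever produces necessary conditions for saturation (goodness), never the non-good saturating ultrafilters the converse requires.
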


Evidence for this conjecture is discussed in \ref{c:evidence} below. 
In \S \ref{s:sop2} below we give a positive answer to Problem \ref{prob-e}: 

\begin{theorem-x} \emph{(Theorem \ref{concl:sop2-max})}
Let $T$ be a theory with $SOP_2$. Then $T$ is maximal in Keisler's order.
\end{theorem-x}

\noindent To prove Theorem \ref{concl:sop2-max} we develop a general theory of 
cofinality spectrum problems, a main contribution of the paper (which we will return to presently). 
Applying a theorem of Malliaris \cite{mm4}-\cite{mm5}, we then show: 

\begin{theorem-x} \emph{(Theorem \ref{t:tfeq})}
There is a minimum class among the non-simple theories in Keisler's order, which
contains the theory $\tfeq$ of a parametrized family of independent equivalence relations.
\end{theorem-x}

\br In order to state the paper's remaining central theorem, 
we now outline some features of our approach which connects and solves both Problems \ref{prob-2} and \ref{prob-e} above.
In this discussion, we use $\de$ to denote a regular ultrafilter on $\lambda$. 
 
By \emph{cut} we will mean an unfilled pre-cut (in set-theoretic literature cuts 
in reduced powers are often referred to as \emph{gaps}).  
Define the cut spectrum of the ultrafilter $\de$ to be:
\[ \mc(\de) = \{ (\kappa_1, \kappa_2) ~:~ \kappa_1, \kappa_2 \mbox{ regular}, ~\kappa_1 + \kappa_2 \leq \lambda, ~ \mbox{} 
(\mathbb{N}, <)^\lambda/\de \mbox{ has a $(\kappa_1, \kappa_2)$-cut } \}. \]
Since any theory of linear order is in the maximal Keisler class, we have that:  

\begin{fact} \label{max-good}
Let $\de$ be a regular ultrafilter on $\lambda$. Then $\de$ is $\lambda^+$-good iff $\mc(\de) = \emptyset$.
\end{fact}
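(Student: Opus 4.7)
The plan is to pass through Keisler's characterization of goodness in terms of saturation of ultrapowers, together with Shelah's theorem that any theory with the strict order property is maximal in Keisler's order. Concretely, Keisler's theorem supplies: if $\de$ is $\lambda^+$-good then $M^\lambda/\de$ is $\lambda^+$-saturated for every countable $M$. Shelah's theorem (\cite{Sh:a}) applied to $M = (\omega, <)$ supplies the converse at a single witness: if $(\omega, <)^\lambda/\de$ is $\lambda^+$-saturated, then $\de$ is $\lambda^+$-good. So it suffices to show that $(\omega, <)^\lambda/\de$ is $\lambda^+$-saturated if and only if $\mc(\de) = \emptyset$.

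For the forward direction, suppose $(\omega, <)^\lambda/\de$ is $\lambda^+$-saturated. Any $(\kappa_1, \kappa_2) \in \mc(\de)$ provides strictly increasing and strictly decreasing sequences $\langle a_\alpha : \alpha < \kappa_1 \rangle$ and $\langle b_\beta : \beta < \kappa_2 \rangle$ in the ultrapower with $a_\alpha < b_\beta$ for all $\alpha, \beta$ but no element strictly above all $a_\alpha$ and strictly below all $b_\beta$. The partial type
\[ p(x) = \{a_\alpha < x : \alpha < \kappa_1\} \cup \{x < b_\beta : \beta < \kappa_2\} \]
lives over a parameter set of size $\leq \kappa_1 + \kappa_2 \leq \lambda$, is finitely satisfiable, and is by construction unrealized, contradicting $\lambda^+$-saturation. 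Hence $\mc(\de) = \emptyset$.

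Conversely, suppose $\mc(\de) = \emptyset$ and let $p(x)$ be a consistent 1-type over $A \subseteq (\omega, <)^\lambda/\de$ with $|A| \leq \lambda$. Partition $A = A_1 \cup A_2$ with $A_1 = \{a \in A : (a < x) \in p\}$ and $A_2 = A \setminus A_1$. If $p$ is not realized, then passing to a strictly increasing cofinal subsequence of $A_1$ and a strictly decreasing coinitial subsequence of $A_2$ (with cofinalities computed in the ultrapower) produces a $(\kappa_1, \kappa_2)$-cut with $\kappa_1, \kappa_2$ regular and $\kappa_1 + \kappa_2 \leq \lambda$, contradicting $\mc(\de) = \emptyset$. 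So every such $p$ is realized, giving $\lambda^+$-saturation of $(\omega, <)^\lambda/\de$; combined with the first paragraph, $\de$ is $\lambda^+$-good.

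The real content of the fact is packaged into Shelah's maximality theorem for the strict order property, which is imported wholesale; modulo that, no step is expected to present a real obstacle, as the fact reduces to a dictionary between the invariant $\mc(\de)$ and the model-theoretic notions of saturation and goodness. The only mild internal care needed is in reducing $1$-types over $A$ to cuts: this uses quantifier elimination for the theory of $(\omega, <)$, so that a consistent $1$-type over $A$ is determined by its partition of $A$ into ``below $x$'' and ``above $x$''.
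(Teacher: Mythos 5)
Your proposal follows essentially the same route as the paper: both reduce the fact to Keisler's characterization of the maximum class together with Shelah's theorem that theories with the strict order property are Keisler-maximal, and then translate $\lambda^+$-saturation of an ultrapower of a linear order into the non-existence of $(\kappa_1,\kappa_2)$-cuts with $\kappa_1+\kappa_2\leq\lambda$. The only substantive difference is which linear order carries the argument: the paper applies Shelah's theorem to $(\mathbb{Q},<)$, where the type--cut dictionary is immediate (by quantifier elimination for dense linear orders a $1$-type over $A$ really is determined by the partition of $A$), and then transfers cuts to $(\omega,<)^\lambda/\de$ using regularity; you work with $(\omega,<)$ directly. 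That is fine, but your last paragraph is inaccurate as stated: $Th(\omega,<)$ does not eliminate quantifiers in the pure order language, and even after adding successor and $0$ a consistent $1$-type over $A$ is \emph{not} determined by its partition of $A$ --- for instance, over $A=\{a,b\}$ the partition $(\{a\},\{b\})$ is induced both by the realized type ``$x=S(a)$'' and by the type saying $x$ is infinitely far from $a$ and from $b$, which may be omitted. The repair is routine: replace $A$ by its closure $\bar A$ under successor and predecessor (same cardinality up to $\aleph_0$), discard the algebraic types $x=S^k(a)$, which are automatically realized, and note that an omitted non-algebraic type induces a cut of $\bar A$ with no last element on the left and no first element on the right, hence a genuine $(\kappa_1,\kappa_2)$-cut of the ultrapower with $\kappa_1,\kappa_2$ infinite regular and $\leq\lambda$ (the case where the right side is empty is handled by regularity, which gives the ultrapower cofinality $>\lambda$). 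With that adjustment your argument is complete; the paper's choice of $(\mathbb{Q},<)$ plus the regularity transfer is exactly the device that avoids this discrete-order bookkeeping.
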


By a tree $(\mct, \tlf)$ we will mean a set $\mct$ of sequences partially ordered by initial segment $\tlf$ such that the 
set of predecessors of any element of $\mct$ is well-ordered. 
Say that $\de$ has $\kappa$-treetops if for any tree $(\mct, \tlf)$ and any infinite regular cardinal $\gamma < \kappa$, 
any strictly increasing $\gamma$-indexed sequence in $(\mct, \tlf)^\lambda/\de$ has an upper bound.  
In \S \ref{s:sop2} below, we prove that if $\de$ saturates some theory with $SOP_2$ then $\de$ has $\lambda^+$-treetops. 
Then a positive answer to Problem \ref{prob-e} would follow from a positive answer to:

\begin{prob} \label{prob-a}
Suppose $\de$ has $\lambda^+$-treetops. Is $\mc(\de) = \emptyset$? 
\end{prob}

This is because a positive answer to Problem \ref{prob-a} would establish that any ultrafilter able to saturate some theory with $SOP_2$ is necessarily good.  

To further compare $\mc(\de)$ with the treetops of $\de$, let us define $\xp_\de$ to be the least $\kappa$ such that there exist $\kappa_1, \kappa_2$ with 
$\kappa_1 + \kappa_2 = \kappa$ and $(\kappa_1, \kappa_2) \in \mc(\de)$. Likewise, define $\xt_\de$ to be the greatest $\kappa$ such that 
$\de$ has $\kappa$-treetops. 
Below $\xp_\de$ there are no cuts, then, and below $\xt_\de$ infinite paths through trees have upper bounds. 
The names $\xp_\de$, $\xt_\de$ are chosen to be suggestive of the cardinal invariants of \S\ref{s:p-t}, and note that Problem \ref{prob-a} 
is equivalent to asking: 

\begin{prob} \label{prob-r}
Can it happen that $\xp_\de < \xt_\de$?
\end{prob}

This analogy between Problems \ref{prob-2} and \ref{prob-e} becomes formal as follows. Preliminary investigation, 
e.g. the claim in Section \ref{s:motiv-up} below, shows that the hypothesis of treetops has nontrivial consequences for 
the possible cuts in ultrapowers of linear orders. Moreover, such proofs may be carried out using only a few essential features of ultrapowers: 
the fact that ultrapower of $(\mathbb{N}, <)$ is pseudofinite (it remains a discrete linear order in which any nonempty, bounded, 
definable subset has a first and last element), and the fact that ultrapowers commute with reducts (meaning here that we may expand a given model of linear order 
to have available certain trees of functions from the order to itself, which the proof may then manipulate). 
\S \ref{s:motiv-up} below gives further details.  

In \S \ref{s:definitions}, we define the true general context: cofinality spectrum problems, a central definition of the paper.   
These are given essentially as the data $\cs$ of an elementary pair of models $M \preceq M_1$, equipped with a suitably closed 
set of formulas $\Delta$ which define distinguished pseudofinite linear orders in $M_1$, and which admit expansion to the elementary pair $M^+ \preceq M^+_1$ where this expanded language has 
available uniform definitions for certain trees of functions from each distinguished linear order to itself (trees under the partial order given by initial segment). 
For a given cofinality spectrum problem $\cs$, let $\ord(\cs)$ denote the set of distinguished orders, and $\tr(\cs)$ the set of associated trees. 
Let $\xt_\cs$ be the minimal regular cardinal $\tau$
such that some tree in $\tr(\cs)$ has a strictly increasing $\tau$-sequence with no upper bound.
When $\lambda < \xt_\cs$, we say $\cs$ has \emph{$\lambda^+$-treetops}.  Let $\cts$ be the set of pairs of
regular cardinals $(\kappa_1, \kappa_2)$ which appear as the cofinalities of a cut in some distinguished linear order of $\cs$. 

In this context, the problem of comparing linear orders and trees has become: 

\begin{qst}[Central question] \label{q:central}
Let $\cs$ be a cofinality spectrum problem. What are the possible values of
\[ \mc(\cs, \xt_\cs) = \{ (\kappa_1, \kappa_2) : (\kappa_1, \kappa_2) \in \cts, ~ \kappa_1 + \kappa_2 < \xt_\cs \} ~ \hspace{2mm}{?}\]
\end{qst}

In particular, is $\mc(\cs, \xt_\cs) = \emptyset$? That is, for a cofinality spectrum problem $\cs$, can it happen that $\xp_\cs < \xt_\cs$? 
As the definition of cofinality spectrum problems includes that of regular ultrapowers, this generalizes Problem \ref{prob-r}.  
Thus, the maximality of $SOP_2$ in Keisler's order would follow from proving that for any cofinality spectrum problem $\cs$, 
$\mc(\cs, \xt_\cs) = \emptyset$. 

In Section \ref{s:p-t}, 
it is proved that \emph{if} $\xp < \xt$ \emph{then} there exists a cofinality spectrum problem in which $\xp_\cs \leq \xp < \xt \leq \xt_\cs$, 
where $\xp$, $\xt$ are the cardinal invariants from Definition \ref{d:intro}.  
(In fact, $\xp_\cs = \xp$ and $\xt_\cs = \xt$, though this is not used.)
This cofinality spectrum problem is built using $M = M^+ = (\mch(\omega_1), \epsilon)$ with $\mg$ a generic ultrafilter for the forcing notion $([\omega]^{\aleph_0}, \supseteq^*)$ 
and $M_1 = M^+_1$ the generic ultrapower $M^\omega/\mg$. 
By definition of $\xp_\cs$, this says that if $\xp < \xt$ there is a cofinality spectrum problem $\cs$ for which 
$\mc(\cs, \xt_\cs) \neq \emptyset$. Thus, in order to prove $\xp = \xt$, it would suffice to prove, in ZFC, that 
$\mc(\cs, \xt_\cs) = \emptyset$ for all cofinality spectrum problems $\cs$.

\br

These reductions underline the importance of Question \ref{q:central}, whose solution is the core of the paper. The main steps are as follows, and the proofs 
are primarily model-theoretic.  
In Section \ref{s:context}, we introduce the right level of abstraction. 
In Section \ref{s:uniqueness}, we prove that for any cardinal $\kappa < \min \{ \xp^+_\cs, \xt_\cs \}$ 
there is at most one $\kappa^\prime$ such that $(\kappa, \kappa^\prime) \in \mc(\cs, \xt_\cs)$. 
In other words, the lower cofinality (or coinitiality) function is well defined. 
In Section \ref{s:sat}, we prove that cofinality spectrum problems have a certain amount of local saturation; here, local types are 
types of elements in one of the distinguished linear orders. 
In Section \ref{s:towards}, we show that any cofinality spectrum problem contains a certain amount of Peano arithmetic, sufficient to carry out G\"odel coding. 
In Section \ref{s:symmetric}, we rule out symmetric cuts in $\mc(\cs, \xt_\cs)$. 
In Section \ref{s:lcf-aleph-0}, a warm-up to Section \ref{s:main-theorems}, 
we show that if the lower cofinality of $\aleph_0$ is not $\aleph_1$ then it is at least $\min \{ \xp^+_\cs, \xt_\cs \}$. 
In the key Section \ref{s:main-theorems}, we rule out \emph{all} asymmetric cuts in $\mc(\cs, \xt_\cs)$. 
In Section \ref{s:main-thm}, we give one of the paper's main theorems: for any cofinality spectrum problem $\cs$, $\mc(\cs, \xt_\cs) = \emptyset$. 

Sections \ref{s:defns}--\ref{s:min-non-simple} develop the consequences for regular ultrapowers and Keisler's order. 
Section \ref{s:p-t} addresses $\xp$ and $\xt$. 

Our work here also gives a new characterization of Keisler's good ultrafilters, Theorem \ref{maximal-x} quoted below. 
Cofinality spectrum problems include other central examples, such as Peano arithmetic, as developed further in our paper \cite{MiSh:F1361}.

\[ * \hspace{5mm} * \hspace{5mm} * \hspace{5mm} * \hspace{5mm} * \hspace{5mm} * \]

\bbr

\subsection{List of main theorems}
The main results of the paper are the following.

\begin{theorem-m} \emph{(Theorem \ref{no-cuts})}
Let ~$\cs$ be a cofinality spectrum problem.  Then $\mc(\cs, \xt_\cs) = \emptyset$.
\end{theorem-m}

\begin{theorem-x} \emph{(Theorem \ref{concl:sop2-max})}
Let $T$ be a theory with $SOP_2$. Then $T$ is maximal in Keisler's order.
\end{theorem-x}

\begin{theorem-x} \emph{(Theorem \ref{maximal-x})}
For a regular ultrafilter $\de$ on $\lambda$, the following are equivalent: 
(a) $\de$ has $\lambda^+$-treetops, (b) $\kappa \leq \lambda$ implies $(\kappa, \kappa) \notin \mc(\de)$, 
(c) $\mc(\de) = \emptyset$, (d) $\de$ is $\lambda^+$-good. 
\end{theorem-x}

\begin{theorem-x} \emph{(Theorem \ref{t:tfeq})}
There is a minimum class among the non-simple theories in Keisler's order, which
contains the theory $\tfeq$ of a parametrized family of independent equivalence relations.
\end{theorem-x}

\begin{theorem-x} \emph{(Theorem \ref{theorem:p-t})}
$\xp = \xt$.
\end{theorem-x}

Our methods give various results about the structure of regular ultrapowers.   
For instance, combining our work here with work of Malliaris \cite{mm-thesis}, \cite{mm4}, we prove that any 
ultrafilter which saturates some non-low or non-simple theory must be flexible, Conclusion \ref{concl-flex} below. 

The sections on regular ultrafilters and on cardinal invariants of the continuum may be read independently of each other, while Sections 
\ref{s:context}--\ref{s:main-thm} are prerequisites for both.

\br
In a work in preparation we intend to deal also with the following questions: Do cofinality spectrum problems 
require high theories $($like bounded Peano arithmetic, Section $\ref{s:towards}$ below$)$? Last elements? Successor? Do the theorems on the cofinality spectrum $[$i.e.
$\ref{m2}$ $($thus: $\ref{lcf-cor}$, $\ref{n-is-enough}$, $\ref{cor:sym}$, $\ref{extend}$,  
$\ref{m5}$--$\ref{m5-subclaim}$, $\ref{last-cut}$ and also $\ref{c:ceiling}$, $\ref{o:incr}$, $\ref{m2a}$, $\ref{bijection}$$)$
really need the same assumptions?  What happens with so-called $\xp_\lambda, \xt_\lambda$? 
This work in preparation will also show that the lower cofinality is well defined in a more general setting, see $\ref{disc:lcf}$ below.
We also intend to deal with characterizing the maximum class in the related order $\tlf_*$, leveraging the work of 
\cite{DzSh}, \cite{ShUs} mentioned above and the results here. 

\br

\noindent\textbf{Acknowledgments.} 
We would like to warmly thank the anonymous referee for thoughtful, extensive reports  
which significantly improved the presentation of this paper.

\section{Cofinality spectrum problems} \label{s:definitions} \label{s:context}

In this section we give some central definitions of the paper. 

\begin{conv}[Conventions on notation] \emph{} \label{c:first}
\begin{enumerate}
\item $\ell(\bar{y})$ is the length of a tuple of variables $\bar{y}$. 
This length exists in the ambient model of set theory. It is always a standard ordinal, and usually finite.  
\item {Definable means with parameters, unless otherwise stated.} 
\item We say that a linear order is pseudofinite if it is discrete and every nonempty, bounded, definable subset has a first and last element. 
\item When $T$ is a theory or $M$ a model, we follow model-theoretic convention and write $\tau(T)$, $\tau(M)$, $\tau_T$, $\tau_M$, etc. to indicate the ambient vocabulary 
or signature. 
\item Given an ultrapower $M^I/\de$, we fix in advance some lifting $M^I/\de \rightarrow M^I$. Then for any element $a$ of the ultrapower and $t \in I$,
we write $a[t]$ to denote the projection to the $t$-th coordinate of this fixed representation. 
\end{enumerate}
\end{conv}

\subsection{A motivating example} \label{s:motiv-up} \emph{ }
Let $\de$ be a regular ultrafilter on $\lambda$. Let $\mc(\de)$ and $\lambda^+$-treetops be as defined in the Introduction immediately before and after Fact \ref{max-good}. 
We begin with a proof to illustrate the effect of treetops on $\mc(\de)$, which will serve to motivate the definition of cofinality spectrum problems. 
Namely, with Problems \ref{prob-a}-\ref{prob-r} from the Introduction in mind, we rule out symmetric cuts. 

\begin{lemma} \label{c:motiv} 
Suppose $\de$ is a regular ultrafilter on $\lambda$ with $\lambda^+$-treetops, and $\kappa < \lambda^+$ is regular.  
Then $\mc(\de)$ has no $(\kappa, \kappa)$-cuts, i.e. $(\kappa, \kappa) \notin \mc(\de)$.
\end{lemma}

\begin{proof}
Let $M = (\mathbb{N}, <)$ and $M_1 = M^\lambda/\de$.  
Note that by the first fundamental theorem of ultraproducts, \lost theorem, 
any nonempty bounded definable subset of the domain of $M_1$ has a first and last, 
i.e. least and greatest, element.  

Assume for a contradiction that in $M_1$ there is $(\bar{a}, \bar{b}) = (\langle a_\alpha : \alpha < \kappa \rangle, \langle b_\alpha : \alpha < \kappa \rangle)$ 
such that for all $\beta < \alpha < \kappa$, $M_1 \models a_\beta < a_\alpha < b_\alpha < b_\beta$ but there is no $c \in M_1$ such that 
$a_\alpha < c < b_\alpha$ for all $\alpha < \kappa$. 

Let $(\mct, \tlf)$ be the tree whose elements are finite sequences of pairs of natural numbers, partially ordered by initial segment. 
Expand $M$ to a model $M^+$ in which the tree $(\mct, \tlf)$ is definable, for instance by letting $M^+ = (\mch(\omega_1), \epsilon)$ and 
identifying $\mathbb{N} = dom(M)$ with $\omega$ in $M^+$. In this expansion, what is important is that:

\begin{enumerate}[label=\emph{(\alph*)}]
\item $\mathbb{N}$ is a definable set, $\mct$ is a definable set and $\tlf$ is a definable relation. 
\item Elements of $\mct$ are functions from an initial segment of $\mathbb{N}$ into $\mathbb{N} \times \mathbb{N}$.
\item The following are definable, uniformly, for each $a \in \mct$: 
\begin{enumerate}
\item[(i)] the length function $\lgn(a)$  
\item[(ii)] the function giving $\maxdom(a))$, i.e. $\lgn(a)-1$,
\item[(iii)] for each $n \leq \maxdom(a))$, the evaluation function $a(n)$, 
\item[(iv)] for each $n \leq \maxdom(a))$, the projection functions $a(n,0)$ and $a(n,1)$, where these denote the 
two coordinates of $a(n)$. 
\end{enumerate}
\end{enumerate}

The second fundamental theorem of ultraproducts, Theorem \ref{commute-with-reducts} p. \pageref{commute-with-reducts} below, states that ultraproducts commute with reducts. 
Having chosen $M^+$, then, let $M^+_1$ be the induced expansion of $M_1$. Then e.g. elements of $\mct^{M^+_1}$ are functions from an initial segment of 
the nonstandard integers into pairs of nonstandard integers, and (a)-(c) remain definable in $M^+_1$.  
The length function $\lgn$ will then often take a nonstandard length. (Throughout the paper,  
we will use $\lgn$ to denote a possibly nonstandard length function definable in the structure under consideration. 
In contrast, we use $\ell(\bar{y})$ following Convention \ref{c:first}.)

Let $\vp(x)$ be a formula expressing: $x \in \mct$ and $n < m \leq \maxdom(x))$ implies  
$x(n,0) < x(m,0) < x(m,1) < x(n,1)$. In $M^+_1$, $\vp$ defines an infinite subtree 
of $\mct^{M^+_1}$, which we denote in the rest of the proof\footnote{Why not simply begin with the definition for $\mct_*$? The present approach generalizes more easily to later sections, 
where we will distinguish certain families of basic trees and then work, as necessary, within their definable sub-trees.} as $\mct_*$. 
Note that if $M^+_1 \models \vp(c)$ and $n \leq \maxdom(c))$, then $M^+_1 \models \vp(c \rstr_n)$. 

By induction on $\alpha < \kappa$ we now choose elements $c_\alpha$ of $\mct_*$ and $n_\alpha$ of $\mathbb{N}^{M^+_1}$  
such that:
\begin{enumerate}
\item for all $\beta < \alpha < \kappa$, $M^+_1 \models c_\beta \tlf c_\alpha$ 
\item for all $\alpha < \kappa$,  $n_\alpha = \maxdom(c_\alpha))$ 
\item for all $\alpha < \kappa$, $c_\alpha(n_\alpha, 0) = a_\alpha$, and $c_\alpha(n_\alpha, 1) = b_\alpha$. 
\end{enumerate}

For the base case, let $c_0 = \langle (a_0, b_0) \rangle$. 

When $\alpha = \beta + 1$, having defined $c_\beta$, let $c_\alpha = {c_\beta}^{\smallfrown}\langle (a_\alpha, b_\alpha) \rangle$. 
Then $c_\alpha \in \mct_*$ by definition of the sequences $\bar{a}, \bar{b}$. Let $n_\alpha = n_\beta + 1$. 

When $\alpha$ is a limit ordinal, by the hypothesis of treetops there is $c_* \in \mct_*$ such that $\beta < \alpha$ implies $c_\beta \tlf c_*$. 
Let $n_* = \maxdom(c_*))$. By definition of $\mct_*$ and of $\tlf$, it will be the case that 
$\beta < \alpha$ implies $c_\beta(n_\beta, 0) = c_*(n_\beta, 0) < c_*(n_*,0) < c_*(n_*, 1) < c_*(n_\beta, 1) = c_\beta(n_\beta, 1)$, but  
it may also be the case that $a_\alpha < c_*(n_*,0) < c_*(n_*, 1) < b_\alpha$. However, the set 
\[ \{ n \leq n_* : c_*(n,0) < a_\alpha \land b_\alpha < c_*(n,1) \} \]
is a nonempty bounded definable subset of nonstandard integers, so has a maximal element, call it $m_*$.  
Note that necessarily $c_\beta \tlf c_* \rstr_{m_*}$ for each $\beta < \alpha$. 
Now $c_\alpha = {{c_* \rstr_{m_*}}^\smallfrown \langle (a_\alpha, b_\alpha) \rangle}$ realizes $\vp$ and, letting 
also $n_\alpha = m_*$, this completes the inductive construction. 

Now the path $\bar{c} = \langle c_\alpha : \alpha < \kappa \rangle$ in $\mct_*$ corresponds, by construction, to 
a cofinal sequence in our original cut. By hypothesis of treetops, there is an upper bound $c_{\star} \in \mct_*$ so $\alpha < \kappa$ implies $c_\alpha \tlf c_{\star}$. 
Let $n_{\star} = \maxdom(c_{\star}))$. Then for each $\alpha < \kappa$, by definition of $\vp$, 
\[  a_\alpha = c_\alpha(n_\alpha, 0) = c_{\star}(n_\alpha, 0) < c_{\star}(n_{\star}, 0) < c_{\star}(n_{\star}, 1) < c_{\star}(n_\alpha, 1) = c_\alpha(n_\alpha, 1) = b_\alpha \]
realizing the cut $(\bar{a}, \bar{b})$. This contradiction shows $(\bar{a}, \bar{b})$ cannot exist. 
\end{proof}

\br

Lemma \ref{c:motiv} hints at the power of the treetops hypothesis with respect to saturation of the underlying order. It suggests 
that \emph{in regular ultrapowers}, one might productively analyze existence or nonexistence of other families of cuts in $\mc(\de)$ 
by considering progressively more sophisticated trees.
However, on closer inspection, the use of regular ultrapowers does not appear essential. 
Rather, the key features of that setting were:

\begin{enumerate}
\item \lost theorem, used to show $M \preceq M_1$, and that the set $\mathbb{N}^{M_1}$ behaved in a pseudofinite way: it remained a definable, discrete linear 
order in which any bounded, nonempty definable subset had a first and last element. 
\item Ultrapowers commute with reducts, invoked to study a given order in $M, M_1$ by uniformly (simultaneously) expanding 
both models to have available a suitable tree of sequences from the order to itself. Note that even after the expansion, 
any bounded, nonempty definable subset of $\mathbb{N}^{M^+_1}$ has a first and last element. 
\end{enumerate}

The setting of cofinality spectrum problems, a fundamental definition of this paper, can be seen as a suitable 
abstraction of this setting (i.e. studying cuts in linear orders under the hypothesis of existence of paths through 
related families of trees) which retains the key features just described. 
Although Definitions \ref{d:estt} and \ref{d:csp} have wider application, regular ultrapowers 
will be an important context for the second half of the paper.

\subsection{Main definitions}
We now give several main definitions of the paper. 

First, given a family of formulas defining linear orders, 
we specify which trees we would like to have available.\footnote{Although in Section \ref{s:motiv-up} we used that ultrapowers commute with reducts, 
giving us access to any reasonable tree,  
it is not necessary to have all or even many trees available.  
Here we ask for uniform definitions for the necessary families of trees.}
The conditions are of two kinds: on one hand, we require that $\Delta$ have a certain form and satisfy some closure
conditions, and on the other hand we require that certain sets derived from $\Delta$ are definable. 

\begin{defn} \label{d:estt}  \emph{(ESTT, enough set theory for trees)}
Let $M_1$ be a model and $\Delta$ a nonempty set of formulas in the language of $M_1$.
We say that $(M_1, \Delta)$ has \emph{enough set theory for trees} when the following conditions are true.

\begin{enumerate}
\item $\Delta$ consists of first-order formulas $\vp(\bar{x},\bar{y};\bar{z})$, with 
$\ell(\bar{x}) = \ell(\bar{y})$. 

\item For each $\vp \in \Delta$ and each parameter $\overline{c} \in {^{\ell(\overline{z})}M_1}$, 
$\vp(\bar{x},\bar{y}, \bar{c})$ defines a discrete linear order on $\{ \bar{a} : M_1 \models \vp(\bar{a}, \bar{a}, \bar{c}) \}$ with a first and last element. 

\item The family of all linear orders defined in this way will be denoted $\ord(\Delta, M_1)$. Specifically, each $\ma \in \ord(\Delta, M_1)$ 
is a tuple $(X_\ma, \leq_\ma, \vp_\ma, \overline{c}_\ma, d_\ma)$, where:
\begin{enumerate}
\item $X_\ma$ denotes the underlying set $\{ \bar{a} : M_1 \models \vp_\ma(\bar{a}, \bar{a}, \bar{c}_\ma) \}$
\item $\bar{x} \leq_\ma \bar{y}$ abbreviates the formula $\vp_\ma(\bar{x},\bar{y},\bar{c}_\ma)$ 
\item $d_\ma \in X_\ma$ is a bound for the length of elements in the associated tree; it is often, but not always, $\max X_\ma$. 
If $d_\ma$ is finite, we call $\ma$ trivial. 
\end{enumerate}

\item For each $\ma \in \ord(\ma)$, $(X_\ma, \leq_\ma)$ is pseudofinite, meaning that any bounded, nonempty, $M_1$-definable subset has a $\leq_\ma$-greatest 
and $\leq_\ma$-least element. 

\item For each pair $\ma$ and $\mb$ in $\ord(\Delta, M_1)$, there is $\bc \in \ord(\Delta, M_1)$ such that: 
\begin{enumerate} 
\item there exists an $M_1$-definable bijection $\pr: X_\ma \times X_\mb \rightarrow X_\bc$ such that the coordinate projections are $M_1$-definable. 
\item if $d_\ma$ is not finite in $X_\ma$ and $d_\mb$ is not finite in $X_\mb$, then also $d_\bc$ is not finite in $X_\bc$. 
\end{enumerate} 

\item For some nontrivial $\ma \in \ord(\Delta, M_1)$, there is $\bc \in \ord(\Delta, M_1)$ such that
$X_\bc = \pr(X_\ma \times X_\ma)$ and the ordering $\leq_\bc$ satisfies: 
\[ M_1 \models (\forall x \in X_\ma)(\exists y \in X_\bc)(\forall x_1, x_2 \in X_\ma)(\max \{ x_1, x_2 \}  \leq_\ma x \iff \pr(x_1, x_2) \leq_\bc y) \]

\item To the family of distinguished orders, we associate a family of trees, as follows. 
For each formula $\vp(\bar{x},\bar{y}; \bar{z})$ in $\Delta$ there are formulas $\psi_0, \psi_1, \psi_2$ of the language of $M_1$ such that 
for any $\ma \in \ord(\cs)$ with $\vp_\ma = \vp$:

\begin{enumerate}
\item $\psi_0(\bar{x}, \overline{c}_\ma)$ defines a set, denoted $\mct_\ma$, of functions from $X_\ma$ to $X_\ma$. 
\item $\psi_1(\bar{x}, \bar{y}, \overline{c})$ defines a function $\lgn_\ma : \mct_\ma \rightarrow X_\ma$ satisfying:
	\begin{enumerate}
	\item for all $b \in \mct_\ma$, $\lgn_\ma(b) \leq_\ma d_\ma$. 
	\item for all $b \in \mct_\ma$, $\lgn_\ma(b) = \maxdom(b))$.  
\end{enumerate}
\item $\psi_2(\bar{x},\bar{y},\overline{c})$ defines a function from 
$\{ (b,a) : b \in \mct_\ma, a \in X_\ma, a <_\ma \lgn_\ma(b) \}$ 
into $X_\ma$ whose value is called $\xr_\ma(b,c)$, and abbreviated $b(a)$. 
\begin{enumerate}
	\item  if $c \in \mct_\ma$ and $\lgn_\ma(c) < d_\ma$ and $a \in X_\ma$, then $c^\smallfrown \langle a \rangle$ exists, 
	i.e.  there is $c^\prime \in \mct_\ma$ such that $\lgn_\ma(c^\prime) = \lgn_\ma(c)+1$, $c^\prime(\lgn_\ma(c)) = a$, and   
	\[ (\forall a <_\ma \lgn_\ma(c) ) (c(a) = c^\prime(a)) \]
\item $\psi_0(\bar{x}, \overline{c})$ implies that if $b_1 \neq b_2 \in \mct_\ma$, $\lgn_\ma(b_1) = \lgn_\ma(b_2)$ then for some 
$n <_\ma \lgn_\ma(b_1)$, $b_1(n) \neq b_2(n)$.
\end{enumerate}

\item $\psi_3(\bar{x},\bar{y},\overline{c})$ defines the partial order $\tlf_{\ma}$ on $\mct_{\ma}$ given by initial segment, that is, 
such that that $b_1 \tlf_{\ma} b_2$ implies:
\begin{enumerate}
	\item for all $b, c \in \mct_\ma$, $b \tlf c$ implies $\lgn_\ma(b) \leq_\ma \lgn_\ma(c)$.

	\item $\lgn_\ma(b_1) \leq_\ma \lgn_\ma(b_2)$
	\item $(\forall a <_\ma \lgn_\ma(b_1) ) \left( b_2(a) = b_1(a)\right)$
\end{enumerate}
\end{enumerate}
\end{enumerate}
The family of all $\mct_\ma$ defined this way will be denoted $\tr(\Delta, M_1)$.  We refer to elements of this family as trees. 
\end{defn}

In $\ref{d:estt}(2)$, we ask for both first and last elements. The first element is important: we repeatedly use that these orders 
are pseudofinite. The last element is technical; we could alter this but would need to make other changes to ensure that the 
derived trees are not too large.   
Also, the condition in (6) should be understood as saying that while we need Cartesian products to exist, it is largely 
unimportant what exactly the order on these products is (as long as it is discrete, pseudofinite and so forth). 
It's sufficient that one such order behave well, like the usual G\"odel pairing function. We could have alternately asked that (6) hold for all 
products.

\begin{conv} \label{conv:second}
Many proofs will consider elements of a given tree $\mct_\ma$ 
or a given linear order $X_\ma$.  Although these orders or trees may be defined by some $\vp(\bar{x}, \dots)$ with 
$\ell(\bar{x}) > 1$, we will often omit overlines on their elements; this should not cause confusion as 
the provenance of such elements is clear. 
\end{conv}

\begin{defn} \label{cst}  \label{d:csp}
Say that $(M, M_1, M^+, M^+_1, T, \Delta)$ is a \emph{cofinality spectrum problem} when:
\begin{enumerate}
\item  $M \preceq M_1$.
\item $T \supseteq Th(M)$ is a theory in a possibly larger vocabulary.
\item $\Delta$ is a set of formulas in the language of $M$, i.e., we are interested in studying the orders of $\ml(M) = \ml(M_1)$ in the
presence of the additional structure of $\ml(M^+) = \ml(M^+_1)$. 
\item $M^+$, $M^+_1$ expand $M, M_1$ respectively so that $M^+ \preceq M^+_1 \models T$ 
and $(M^+_1, \Delta)$ has enough set theory for trees. 
\item We may refer to the components of $\cs$ as $M^\cs$, $\Delta^\cs$, etc. for definiteness. When $T = Th(M)$, $M=M^+$,
$M_1 = M^+_1$, or $\Delta$ is the set of all formulas $\vp(x,y,\overline{z})$ in the language of $T$ which satisfy $\ref{d:estt}(2)-(4)$, 
these may be omitted. 
\end{enumerate}
\end{defn}

In what follows, we work almost exclusively with $M^+, M^+_1$ but our results are for $M, M_1$. 
We will make the following conventions in the remainder of the paper:

\begin{conv}[Conventions on cofinality spectrum problems]  \label{c:defn} \emph{ } 
\begin{enumerate}
\item Extending Definition $\ref{d:estt}(3)(c)$
we say that $\cs$ is \emph{trivial} if every $\ma \in \ord(\cs)$ is trivial $($note that this need not mean all the orders are finite, just that the choice 
of tree-height bound $d_\ma$ is in each instance finite$)$. 
We will assume that all cofinality spectrums we work with are 
nontrivial, though sometimes this is repeated for emphasis. 
\item Likewise, by ``$\ma \in \ord(\cs)$'' we will mean ``$\ma \in \ord(\cs)$ and $\ma$ is nontrivial,'' unless otherwise indicated. 
\item By  definable we shall mean definable in the larger \emph{expanded} model,
i.e. in $M^+_1$, possibly with parameters, unless otherwise stated. 
\item We will present Cartesian products without explicitly mentioning the pairing functions, writing e.g. 
``let $\ma \in \ord(\cs)$ and let $\mb$ be such that $X_\mb = X_\ma \times X_\ma$.''
\item If $\bc, \ma_0, \dots, \ma_k \in \ord(\cs)$ are such that 
$X_\bc = X_{\ma_0} \times \cdots \times X_{\ma_k}$, and $c \in \mct_\bc$ and $n \in X_\bc$, $n < \lgn(c)$ $($thus, $c(n)$ is well defined$)$ then
we will write $c(n,i)$ to mean the $i$th coordinate of $c(n)$.  
\end{enumerate}
\end{conv}

\begin{defn} \label{cs-ord}
When $\cs_1, \cs_2$ are cofinality spectrum problems, write $\cs_1 \leq \cs_2$ to mean:
\begin{itemize}
\item $M^{\cs_1} = M^{\cs_2}$, $M^{\cs_1}_1 = M^{\cs_2}_1$
\item $\tau(M^{+, \cs_1}) \subseteq \tau(M^{+, \cs_2})$, i.e. the vocabulary may be larger, and likewise
$T^{+, \cs_1} \subseteq T^{+, \cs_2}$.
\item $( M^{+, \cs_2} \rstr_{\tau(M^{+, \cs_1})} ) \cong M^{+, \cs_1}$
\item $( M^{+, \cs_2}_1 \rstr_{\tau(M^{+, \cs_1})} ) \cong M^{+, \cs_1}_1$
\item $\Delta^{\cs_1} \subseteq \Delta^{\cs_2}$.  
\end{itemize} 
\end{defn}

We will study properties of orders and trees arising in cofinality spectrum problems, as we now describe.  
We follow the model-theoretic terminology, writing cuts instead of gaps. For clarity: 

\begin{defn} \label{mc} \emph{(Cuts, pre-cuts and representations of cuts)}
Let $\cs$ be a cofinality spectrum problem, $\ma \in \ord(\cs)$.  When $\kappa_1, \kappa_2$ are regular, define:

\begin{enumerate}
\item  A $(\kappa_1, \kappa_2)$-cut in $X_\ma$, i.e. in $(X_\ma, \leq_\ma)$,
is given by a pair of sets $(C_1, C_2)$ such that
\begin{itemize}
\item[(a)] $C_1 \cap C_2 = \emptyset$
\item[(b)] $C_1$ is downward closed, $C_2$ is upward closed
\item[(c)] $(\forall x \in C_1)(\forall y \in C_2) (x <_\ma y)$
\item[(d)] $C_1 \cup C_2 = X_\ma$
\end{itemize}

\item If $(C_1, C_2)$ satisfies conditions $(a)$, $(b)$, $(c)$ for being a cut it is called a \emph{pre-cut}, meaning that possibly
$(\exists c)(C_1 < c < C_2)$.

\item By a $(\kappa_1, \kappa_2)$-\emph{representation} of a pre-cut $(C_1, C_2)$ in $X_\ma$ we mean a pair of sequences
\\$(\langle a_i : i < \kappa_1 \rangle$, $\langle b_j : j < \kappa_2 \rangle)$ of elements of $X_\ma$ such that
\begin{enumerate}
\item $\kappa_1, \kappa_2$ are regular cardinals
\item $\langle a_i : i < \kappa_1 \rangle$ is strictly $<_\ma$-increasing and cofinal in $C_1$
\item $\langle b_j : j < \kappa_2 \rangle$ is strictly $<_\ma$-decreasing and coinitial in $C_2$ 
\end{enumerate}
\end{enumerate}
\end{defn}

When the $($pre-$)$cut  $(C_1, C_2)$ has a $(\kappa_1, \kappa_2)$-representation we say that it is a 
$(\kappa_1, \kappa_2)$-$($pre-$)$cut. When there is no danger of confusion, we may informally identify cuts or pre-cuts with one of their representations.

\begin{defn} \label{cst:card}
For a cofinality spectrum problem $\cs$ we define the following: 
\begin{enumerate}
\item $\ord(\cs) = \ord(\Delta^\cs, M^\cs_1)$ 

\br
\item $\cts = $
$\{ (\kappa_1, \kappa_2)   : ~$for some $\ma \in \ord(\cs, M_1)$, $(X_\ma, \leq_\ma)$ 
 has a $(\kappa_1, \kappa_2)$-cut$\}$

\br
\item \label{d:tr-cs} $\tr(\cs) = \{ \mct_\ma : \ma \in \ord(\cs) \} = \tr(\Delta^\cs, M^\cs_{1})$ 

\br

\item $\mc^{\mathrm{ttp}}(\cs) =$ $\{ \kappa ~:~ \kappa \geq \aleph_0, ~ \ma \in \ord(\cs)$, and there is in the tree $\mct_\ma$ 
 a strictly increasing sequence of cofinality $\kappa$ with no upper bound $\}$

\br
\item \label{card:t}  Let $\xt_\cs$ be $\min \mc^{\mathrm{ttp}}(\cs)$ and let $\xp_\cs$ be
$\min \{ \kappa : (\kappa_1, \kappa_2) \in \cts ~\mbox{and}~ \kappa = \kappa_1 + \kappa_2 \}$.
\end{enumerate}

\br
\emph{Our main focus in this paper will be $\mc(\cs, \xt_\cs)$ where this means:}

\br
\begin{enumerate}[resume]
\item For $\lambda$ an infinite cardinal, write
\[ \mc(\cs, \lambda) = \{ (\kappa_1, \kappa_2) ~:~ \kappa_1 + \kappa_2 < \lambda, ~ (\kappa_1, \kappa_2) \in \cts \}. \] 
\end{enumerate}
\end{defn}

\br

Note that by definition of $\cts$ and $\mc^{\mathrm{ttp}}(\cs)$, both $\xt_\cs$ and $\xp_\cs$ are regular. 
The following is a central definition of the paper:

\begin{defn} \emph{(Treetops)} \label{treetops}
Let $\cs$ be a cofinality spectrum problem and $\xt_\cs$ be given by $\ref{cst:card}(\ref{card:t})$. When
$\lambda \leq \xt_\cs$ we say that \emph{$\cs$ has $\lambda$-treetops}.
Our main focus will be the case $\lambda = \mu^+$ for some $\mu < \xt_\cs$. 
\end{defn}

The name \emph{treetops} reflects the definition of $\xt_\cs$: when 
$\kappa = \cf(\kappa) < \xt_\cs$, $\ma \in \ord(\cs)$ thus $\mct_\ma \in \tr(\cs)$, $\ref{cst:card}(5)$, 
any strictly increasing $\kappa$-sequence of elements of $\mct_\ma$ has an upper bound in $\mct_\ma$.

\begin{defn} \label{bc}
Let $\cs$ be a cofinality spectrum problem, $\ma \in \ord(\cs)$. 

\begin{enumerate}
\item[(0)] Write $0_\ma$ for the $\leq_\ma$-least element of $X_\ma$.
\item For any natural number $k$ and any $a \in X_\ma$, let $S^k_\ma(a)$ denote the $k$th successor of $a$ in the discrete
linear order $\leq_\ma$, if defined, and likewise let $S^{-k}_\ma(a)$ denote the $k$th predecessor of $a$, if defined. 
We will generally write $S^k(a)$, $S^{-k}(a)$ when $\ma$ is clear from context.
\item Say that $c \in \mct_\ma$ is \emph{below the ceiling} if $S^k(\lgn(c)) <_\ma d_\ma$ for all $k<\omega$, i.e. if these 
successors exist and the statements are true. 
\end{enumerate}
\end{defn}

We retain the notation $S^k(\cdots)$ rather than abbreviating to $+ k$ in Definition \ref{bc} because addition will be introduced formally later on. 

\br

To conclude this section, let us emphasize that Definition \ref{d:estt} is a choice of abstraction, suitable for our purposes but also relatively strong. 
In future work, it will be useful to consider various substitutions and weakenings. Towards this, we include some alternatives 
in \ref{d:alt}-\ref{d:alt2} below. 

\begin{disc} \label{d:alt} \emph{(Alternate version A: 1-to-1)} In Definition $\ref{d:estt}$ one could  
drop the bound $d_\ma$ and retain an implicit
bound on the length of sequences in $\mct_\ma$ by requiring no repetition in the range. Formally, one would modify 
$\ref{d:estt}$ as follows: drop $d_\ma$ from the definition of $\ord(\Delta, M_1)$, add condition $(1)$ below and replace 
$(7)(b)(iv)$ by $(2)$ below. 
\begin{enumerate}
\item $a_1 <_\ma a_2 <_\ma \lgn_\ma(b)$ implies $(b(a_1)) \neq (b(a_2))$
\item if $b \in \mct_\ma$, $a \in X_\ma \setminus \{ \xr_\ma(b(a^\prime)) : a^\prime < \lgn_\ma(b) \}$ 
and there is still room to concatenate, i.e.
\[ | X_\ma \setminus \left( \{ a \} \cup \{ \xr_\ma(b(a^\prime)) : a^\prime < \lgn_\ma(b) \} \right) | \geq 2 \]
\emph{then} $b^\smallfrown \langle a \rangle$ exists, i.e. there is $b^\prime$ such that $b \tlf_\ma b^\prime$, 
$\lgn_\ma(b^\prime) = \lgn_\ma(b)+1$ and $\xr(b^\prime(\lgn(b)) = a$. 
\end{enumerate}
\end{disc}

\begin{disc} \label{d:alt2} \emph{(Alternate version B: Allowing other orders)}
In Definition $\ref{d:estt}$ one could rename the current set $\ord(\cs)$ as $\operatorname{Psf-ord}(\cs)$, the
set of \emph{pseudofinite orders}, and make
the following changes. 
First, allow other kinds of linear orders in $\ord(\cs)$ $($e.g. dense linear orders, or 
any definable linear order$)$. Second, change the requirements on trees so that any tree is a set of sequences from
some $X_\ma$ into some $X_\mb$ where $\ma \in \operatorname{Psf-ord}(\cs)$ and $\mb \in \ord(\cs)$.
\end{disc}
\br

\subsection{Key examples} \label{s:examples}

A first motivating example is that of Section \ref{s:motiv-up}.  
Consider $M \models (\mathbb{N}, <)$. Then there are a set of $\ml$-formulas $\Delta \supseteq \{ x \leq y \}$, an expanded language $\ml^+$, 
an $\ml^+$-theory $T \supseteq Th(M)$ and such that $(M, M_1, M^+, M^+_1, T, \Delta)$ is a cofinality spectrum problem. 
For instance, let $T = (\mathcal{H}(\omega_1), \epsilon)$ and identify $\mathbb{N}$ with $\omega$. 
The reader interested primarily in our model-theoretic conclusions for regular ultrapowers may wish to look first 
at Section $\ref{s:sop2}$, where the main definitions are specialized to the case of regular ultrapowers. 

A second motivating example involves pairs of models which admit expansions. 
Suppose that $M \preceq M_1$, and so that this example is nontrivial, that $M$ includes an infinite definable linear order. 
$M, M_1$ can be respectively expanded to models $M^+, M_1^+$ in a larger signature $\tau_{M^+} \supseteq \tau_M$ such that:
\begin{enumerate}
\item $M^+ \preceq M_1^+$
\item $M = (M^+\rstr P^{M^+}) \rstr \tau_M$
\item $M_1 = (M_1^+\rstr P^{M_1^+}) \rstr \tau_M$
\item $M^+ = (\mch(\chi), \in, P^{M^+}, R^{M^+})_{R \in \tau(M)}$ for some $\chi$, where $M \in \mch(\chi)$
\end{enumerate}
As ultrapowers commute with reducts, this example generalizes the first one.

A third motivating example will be defined in Section \ref{s:p-t}, but is fundamental to one of our applications so we mention it here. 
Let $M = (\mch(\aleph_1), \in)$, $\bq = ([\mathbb{N}]^{\aleph_0}, \supseteq^*)$, $\mg$ a generic subset of $\bq$ and let $\cn = M^\omega/\mg$ be the
generic ultrapower. Then $M \preceq N$. It will be shown in Section \ref{s:p-t} that there is a natural cofinality spectrum problem $\cs$ with 
$M = M^+_\cs$, $N = M^+_{1,\cs}$.

A fourth motivating example involves arithmetic with bounded induction. 
Let $M$ be a model of Peano arithmetic with bounded induction, meaning that we have induction only for formulas all of whose
quantifiers are bounded. Choose $M_1$ so that $M \preceq M_1$. Let $T = Th(M)$ and let $\Delta$ be a set which includes
$\{(\exists w)( x + w = y) \}$ and its finite products. 
So the elements of $\ord(\Delta, M_1)$ are $($finite Cartesian products of$)$ 
initial segments of the domain of $M^\cs_1$, and the fact that this may be regarded as a cofinality
spectrum problem follows from the fact that G\"odel coding can be carried out in this context, meaning that we may speak about
sequences -- which are not too long --
of elements of $X_\ma$ and thus about trees. The naturalness of this example appears in Section $\ref{s:towards}$ below.

\br

\subsection{Basic tools}

We now develop some consequences of Definition $\ref{d:csp}$. Recall that definable means with parameters in $M^+_1$, 
and that \emph{below the ceiling} was defined in Definition \ref{bc} above.

\begin{claim} \emph{(Treetops for definable sub-trees)} \label{t-def}
Let $\cs$ be a cofinality spectrum problem, $M^+_1 = M^{+,\cs}_1$. 
Let $\ma \in \ord(\cs)$, so $\mct_\ma \in \tr(\cs)$. Let $\vp$ be a formula, possibly with parameters, in $M^+_1$,
and let $(\mct, \tlf_\ma)$ be the subtree of $(\mct_\ma, \tlf_\ma)$ defined by $\vp$ in $M^+_1$. 
Let $\langle c_\alpha : \alpha < \kappa \rangle$ be a $\tlf_\ma$-increasing sequence of elements of $\mct$, 
$\kappa = \cf(\kappa) < \xt_\cs$. Then there is $c_* \in \mct$ such that for all $\alpha < \kappa$, 
$c_\alpha \tlf_\ma c_*$.
\end{claim}

\begin{proof}
By definition of $\xt_\cs$, there is an element $c \in \mct_\ma$, not necessarily in $\mct$, such that for all $\alpha < \kappa$,
$c_\alpha \tlf_\ma c$. 
The set $\{ \lgn(c^\prime) : c^\prime  \tlf_\ma c$ and $c^\prime \in \mct \}$ 
is a nonempty definable subset of $X_\ma$, hence contains a last member $a_*$.
Let $c_*$ be such that $c_* \tlf c$ and $\lgn(c_*) = a_*$, i.e. $c_* = c\rstr_{a_*}$. 
\end{proof}

\begin{lemma} \label{c:ceiling} \emph{(Treetops below the ceiling)}
Let $\cs$ be a cofinality spectrum problem, $\ma \in \ord(\cs)$, $\kappa < \min \{ \xp_\cs, \xt_\cs \}$. Let $\mct \subseteq \mct_\ma$ be a definable
subtree and $\overline{c} = \langle c_\alpha : \alpha < \kappa \rangle$ 
a strictly $\tlf_\ma$-increasing sequence of elements of $\mct$. Then there exists 
$c_{**} \in \mct$ such that $\alpha < \kappa$ implies $c_\alpha \tlf c_{**}$ and $c_{**}$ is below the ceiling.
\end{lemma}

\begin{proof}  
Let $c_* \in \mct$ be such that $\alpha < \beta$ implies $c_\alpha \tlf c_*$, as given by Claim \ref{t-def}. 
As we assumed the sequence $\overline{c}$ is strictly increasing, for each $\alpha < \kappa$ the element $c_\alpha$ must be below the ceiling. 
If $c_*$ is also below the ceiling, we finish. Otherwise,  
\[ ( \{ \lgn(c_\beta) : \beta < \alpha \},  \{ S^{-k}(\lgn(c_*)) : k < \omega \} ) \]
describes a pre-cut in $X_\ma$. It cannot be a cut, as then $(\aleph_0, \kappa) \in \cts$, contradicting the definition of 
$\xp_\cs$. Choose $a \in X_\ma$ realizing this pre-cut, and let $c_{**} = c_*\rstr a$.
\end{proof}

Normally, $\xp_\cs \leq \xt_\cs$ though we shall not need this for the main theorems of the paper, instead keeping track of each separately.  

\section{The function $\lcf$ is well defined strictly below $\min \{ \xp^+_\cs, \xt_\cs \}$}  \label{s:uniqueness}

In the next six sections, 
we give a series of constructions which show how to translate certain conditions on
realization of pre-cuts in linear order into conditions on existence of paths through trees, leading to the central theorem of Section \ref{s:main-thm}. 

We generally write e.g. $M^+_1$ rather than $M^{+, \cs}_1$, but this should not cause confusion; 
the components of a cofinality spectrum problem are always understood to depend on a background $\cs$ fixed at the beginning of a proof.

\begin{lemma} \label{o:incr} \label{m2a}
Let $\cs$ be a cofinality spectrum problem. If $\ma \in \ord(\cs)$ is nontrivial, then for each infinite regular $\kappa \leq \xp_\cs$: 
\begin{enumerate}
\item there is a strictly decreasing $\kappa$-indexed sequence 
$\overline{a} = \langle a_\alpha : \alpha < \kappa \rangle$ of elements of $X_\ma$ such that
\[ ( \{ S^{k}(0_\ma) : k < \omega \}, \{ a_\alpha : \alpha < \kappa \} ) \]
represents a pre-cut $($and possibly a cut$)$ in $X_\ma$.
\item there is a strictly increasing $\kappa$-indexed sequence 
$\overline{a} = \langle a_\alpha : \alpha < \kappa \rangle$ of elements of $X_\ma$ such that
\[ ( \{ a_\alpha : \alpha < \kappa \}, \{ S^{-k}(d_\ma) : k < \omega \} ) \]
represents a pre-cut $($and possibly a cut$)$ in $X_\ma$.
\item thus, there is at least one infinite regular $\theta$ such that $(\kappa, \theta) \in \cts$, 
witnessed by a $(\kappa, \theta)$-cut in $X_\ma$.  
\item thus, there is at least one infinite regular $\theta^\prime$ such that $(\theta^\prime, \kappa) \in \cts$, 
witnessed by a $(\theta^\prime, \kappa)$-cut in $X_\ma$. 
\end{enumerate}
\end{lemma}

\begin{proof} 
(1) By induction on $\alpha < \kappa$ we choose elements $a_\alpha \in X_\ma$ such that:
\begin{itemize}
\item[(a)] for each $\alpha < \kappa$ and each $k<\omega$, $S^{k}(0_\ma) <_\ma a_\alpha$ 
\item[(b)] $\beta < \alpha$ implies $a_\alpha <_\ma a_\beta$. 
\end{itemize}
For $\alpha = 0$, let $a_0$ be the last element of $X_\ma$. As $\ma$ is nontrivial, condition (a) is satisfied. 
For $\alpha = \beta + 1$, let $a_\alpha = S^{-1}(a_\beta)$, recalling that any nonempty definable subset of $X_\ma$ has a greatest element so
the predecessor of any element not equal to $0_\ma$ is well defined.
As (a) holds for $\beta$ by inductive hypothesis, it will remain true for $\beta + 1$. 
For limit $\alpha$, by inductive hypothesis, 
\[ ( \{ S^{k}(0_\ma) : k < \omega \}, \{ a_\beta : \beta < \alpha \} ) \] 
is a pre-cut. However, it cannot be a cut, as then we would have $(\aleph_0, \cf(\alpha)) \in \cts$ 
with $|\alpha| + \aleph_0 < |\kappa| \leq \xp_\cs$, contradicting the definition of $\xp_\cs$. Let $a_\alpha$ be any element
realizing this pre-cut. This completes the construction of the sequence and thus the proof. 

(2) The argument is parallel to (1), going up instead of down. 

(3) Let $\overline{a}$ be a $\kappa$-indexed strictly increasing $\leq_\ma$-monotonic sequence of elements of $X_\ma$ given by (2). By construction, 
$B = \{ b \in X_\ma : \alpha < \kappa$ implies $a_\alpha <_\ma b \} \neq \emptyset$. 
Let $\theta$ be the cofinality of $B$ considered with the reverse order.
It cannot be the case that for some $b \in B$, 
\[ (\{ a_\alpha : \alpha < \kappa \}, \{ b \} ) \]
represents a cut, since $\overline{a}$ is strictly increasing, thus $\alpha < \kappa$ implies $a_\alpha <_\ma S^{-1}(b) <_\ma b$. 
So $\theta$ is an infinite (regular) cardinal, and $(\kappa, \theta) \in \cts$.

(4) The argument is parallel using (1). 
\end{proof}

Of course, in Lemma \ref{o:incr}, the cardinals $\theta$, $\theta^\prime$ may be quite large. In what follows, we will be interested in whether they must always 
be at least the size of $\xt_\cs$.

\begin{theorem} \label{m2} \emph{(Uniqueness)}
Let $\cs$ be a cofinality spectrum problem.
Then for each regular $\kappa \leq \xp_\cs$, $\kappa < \xt_\cs$:
\begin{enumerate}
\item there is one and only one $\lambda$ such that $(\kappa, \lambda) \in \cts$.
\item $(\kappa, \lambda) \in \cts$ iff $(\lambda, \kappa) \in \cts$.
\end{enumerate}
\end{theorem}

\begin{proof} 
Fix $\kappa$ satisfying the hypotheses of the theorem. We will prove that whenever we are given:
\begin{itemize}
\item $\ma, \mb \in \ord(\cs)$
\item $( \langle a^1_\alpha : \alpha < \kappa \rangle, \langle b^1_\epsilon : \epsilon < \theta_1 \rangle)$ representing a $(\kappa, \theta_1)$-cut in $(X_\ma, <_\ma)$
\item $( \langle b^2_\epsilon : \epsilon < \theta_2 \rangle, \langle a^2_\alpha : \alpha < \kappa \rangle)$ representing a $(\theta_2, \kappa)$-cut in $(X_\mb, <_\mb)$
\end{itemize}
then $\theta_1 = \theta_2$. The proof will proceed essentially by threading together the $\kappa$-sides of the cuts.  

To see why this will suffice for the theorem, note first that Lemma \ref{m2a} guarantees existence of \emph{some} $\theta_1$, $\theta_2$ such that 
$(\kappa, \theta_1) \in \cts$ and $(\theta_2, \kappa) \in \cts$. The statement in the previous paragraph will establish that 
$\theta_1 = \theta_2$. It will then follow by transitivity of equality that if $(\kappa, \theta), (\kappa, \theta^\prime) \in \cts$ 
then $\theta = \theta^\prime$, and likewise if $(\theta, \kappa), (\theta^\prime, \kappa) \in \cts$ then $\theta = \theta^\prime$.

Let $\bc \in \ord(\cs)$ be such that $X_\bc = X_\ma \times X_\mb$. Thus, for any $x \in \mct_\bc$ and any $n \leq \maxdom(x))$, 
$x(n)$ is a pair  $(x(n,1), x(n,2))$, with $x(n,1) \in X_\ma$, and $x(n,2) \in X_\mb$. 
Consider the definable subtree $\mct_0 \subseteq \mct_\bc$ consisting of all $x \in \mct_\bc$ such that
$x$ is strictly increasing in the first coordinate and strictly decreasing in the second, i.e.
\[ n^\prime <_\bc n <_\bc \lgn(x) \mbox{ implies } ( x(n^\prime, 1) <_\ma x(n, 1) ) \land ( x(n, 2) <_\mb  x(n^\prime, 2) ).\]
Keeping in mind the representations of cuts fixed at the beginning of the proof, 
we now choose $c_\alpha$, $n_\alpha$ by induction on $\alpha < \kappa$,
such that:
\begin{enumerate}
\item $c_\alpha \in \mct_0$ and $n_\alpha \in X_\bc$
\item $\beta < \alpha$ implies $M^+_1 \models c_\beta \tlf_\bc c_\alpha$
\item $c_\alpha$ is below the ceiling, Definition \ref{bc}
\item $n_\alpha = \lg(c_\alpha) - 1$, so $\maxdom(c_\alpha))$ is well defined (and $c_\alpha$ is not the empty sequence)
\item $c_\alpha(n_\alpha, \ell) = a^\ell_\alpha$ for $\ell = 1, 2$
\end{enumerate}

For $\alpha = 0$: let $c_0 = \langle a^1_0, a^2_0 \rangle$ and let $n_0 = 0_\ma$.

For $\alpha = \beta + 1$: since $c_\beta$ is below the ceiling, concatenation is possible. So by conditions (4)-(5) of the inductive hypothesis, 
we may concatenate $\langle a^1_{\beta+1}, a^2_{\beta+1}\rangle$ 
onto the existing sequence, and let $n_\alpha = n_\beta + 1$.

For $\alpha < \kappa$ limit: As $\cf(\alpha) < \min \{ \xp_\cs, \xt_\cs \}$, 
apply Lemma \ref{c:ceiling} to choose $c \in \mct_0$ such that
$\beta < \alpha$ implies $M^+_1 \models c_\beta \tlf c$, and $c$ is below the ceiling. Then the set
\[\{ n : n <_\bc \lgn(c), M^+_1 \models  ( c(n, 1) <_\ma a^1_\alpha ) \land ( a^2_\alpha <_\mb c(n, 2) ) \} \]
is definable, bounded and nonempty in $X_\bc$, so has a maximal element $n_*$.  By (b) above and the choice of $c$ as an upper bound, 
necessarily for all $\beta < \alpha$
\[ M^+_1 \models ( a^1_\beta <_\ma c(n_*, 1) ) \land ( c(n_*, 2) <_\mb a^2_\beta ) \]
As $c$ and thus all of its initial segments are below the ceiling, concatenation is possible.  
Define $c_\alpha$ to be $(c|_{n_*+1})^{\smallfrown}(a^1_\alpha, a^2_\alpha)$. 
Let $n_\alpha = n_*+1$.  By construction, $c_\alpha$ will remain
strictly monotonic in all coordinates and will $\tlf_\bc$-extend the existing sequence, as desired.
This completes the inductive choice of the sequence.

As $\kappa < \xt_\cs$, by Claim \ref{t-def} there is $c \in \mct_0$ so that $\alpha < \kappa$ implies $c_\alpha \tlf_\ma c$. Let $n_{**} = \lgn(c)-1$, so $n_{**} \in X_\bc$. 
For $\ell = 0,1,2$ and each $\epsilon < \theta_\ell$, recalling the $b^i_\epsilon$ from our original choice of cuts, define:
\[ n_{\epsilon, 1} = \max \{ n \leq_\bc n_{**} : c(n,1) <_\ma b^1_\epsilon \} \]
\[ n_{\epsilon, 2} = \max \{ n \leq_\bc n_{**} :  b^2_\epsilon <_\mb c(n,2) \} \]
\br
\noindent 
Clearly, $\alpha < \kappa$ implies $n_\alpha <_\bc n_{\epsilon, \ell}$ for $\ell = 1, 2$. 
By the choice of sequences witnessing the original cuts, for $\ell = 1, 2$ we have that 
\[ (\langle n_\alpha : \alpha < \kappa \rangle, \langle n_{\epsilon, \ell} : \epsilon < \theta_\ell \rangle) \]
represents a cut in $X_\bc$.  (Clearly it is a pre-cut; if it were filled, say, by $i_*$, then the elements 
$c(i_*, 1)$ and $c(i_*, 2)$ would realize our two original cuts, a contradiction.)  
As we assumed $\theta_1, \theta_2$ were each regular, clearly $\theta_1 = \theta_2$. 

This completes the proof.
\end{proof}

In light of Theorem \ref{s:uniqueness}, the following will be well defined. 

\begin{defn}[The lower cofinality $\lcf(\kappa, \cs)$]  \label{lcf-cs}
Let $\cs$ be a cofinality spectrum problem.
For regular $\kappa \leq \xp_\cs$, $\kappa < \xt_\cs$, we define $\lcf(\kappa, \cs)$ 
to be the unique $\theta$ such that $(\kappa, \theta) \in \cts$.
\end{defn}

\begin{rmk}  
Lemma \ref{c:ceiling} could be proved under the hypothesis   
that $\lcf(\aleph_0, \cs)$ is large, as could Lemma $\ref{o:incr}$.
\end{rmk}

The function $\kappa \mapsto \lcf(\kappa, \cs)$ remains interesting even after the main theorems of this paper. 

\begin{cor} \label{lcf-cor}
Let $\cs$ be a cofinality spectrum problem and $\gamma$ a regular cardinal, $\gamma \leq \xp_\cs$, $\gamma < \xt_\cs$.
Then the following are equivalent:
\begin{enumerate}
\item $\lcf(\gamma, \cs) = \gamma^\prime$
\item $(\gamma, \gamma^\prime) \in \cts$
\item $(\gamma^\prime, \gamma) \in \cts$
\end{enumerate}
\end{cor}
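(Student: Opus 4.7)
The plan is to observe that this corollary is essentially a direct repackaging of Claim \ref{m2} together with Definition \ref{lcf-cs}, so the argument should be very short and essentially formal. All three conditions concern the unordered cardinal $\gamma^\prime$ paired with $\gamma$ in $\cts$, and both directions of the equivalence have already been established in the preceding work.

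First I would verify $(1) \Leftrightarrow (2)$. By Definition \ref{lcf-cs}, $\lcf(\gamma, \cs)$ is defined precisely as the unique $\theta$ with $(\gamma, \theta) \in \cts$, which is well defined by Claim \ref{m2}(1) together with Observation \ref{m2a} (existence and uniqueness of such $\theta$, given the hypothesis $\gamma \leq \xp_\cs$, $\gamma < \xt_\cs$). So $\lcf(\gamma, \cs) = \gamma^\prime$ is literally the same assertion as $(\gamma, \gamma^\prime) \in \cts$.

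Next I would establish $(2) \Leftrightarrow (3)$ by direct appeal to Claim \ref{m2}(2), which says that under the current hypothesis on $\gamma$, we have $(\gamma, \gamma^\prime) \in \cts$ iff $(\gamma^\prime, \gamma) \in \cts$. Combining the two equivalences yields the three-way equivalence in the statement.

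There is no genuine obstacle: the work was done in Claim \ref{m2}, and the hypothesis $\gamma \leq \xp_\cs$ with $\gamma < \xt_\cs$ is precisely what is needed to invoke both parts of that claim as well as Definition \ref{lcf-cs}. The only point worth being careful about is ensuring that the regularity of $\gamma^\prime$ (needed implicitly in $\cts$, by clause $\ref{cst:card}(2)$) is not in doubt --- but this is automatic from the definition of $\cts$ in $\ref{cst:card}$, so no extra argument is required.
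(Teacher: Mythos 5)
Your proposal is correct and matches the paper's intent: the corollary is stated without a separate proof precisely because it is the direct repackaging of Definition \ref{lcf-cs} (which makes $(1)\Leftrightarrow(2)$ a matter of unwinding the definition, using Observation \ref{m2a} and Claim \ref{m2}(1) for existence and uniqueness) together with Claim \ref{m2}(2) for $(2)\Leftrightarrow(3)$. Nothing further is needed.
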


\begin{cor} \label{n-is-enough}
Let $\cs$ be a cofinality spectrum problem and let $\kappa, \theta$ be
regular cardinals with $\kappa \leq \xp_\cs$, $\kappa < \xt_\cs$. 
In order to show that $(\kappa, \theta) \notin \cts$, 
it is sufficient to show that for \emph{some} nontrivial $\ma \in \ord(\cs)$,
$X_\ma$ has no $(\kappa, \theta)$-cut.
\end{cor}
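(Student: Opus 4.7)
The plan is to deduce this directly from the Uniqueness Claim~\ref{m2} together with Observation~\ref{m2a}, which together already pin down all relevant cut cofinalities. The hypothesis on $\kappa$ is precisely what is needed to invoke both of these results: we have $\kappa \leq \xp_\cs$ and $\kappa < \xt_\cs$, so in particular $\kappa \leq \min\{\xp_\cs, \xt_\cs\}$.

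First I would fix a nontrivial $\ma \in \ord(\cs)$ satisfying the hypothesis, namely that $X_\ma$ has no $(\kappa, \theta)$-cut. By Observation~\ref{m2a}, applied to this specific $\ma$, there exists some regular $\theta^*$ such that $X_\ma$ contains a $(\kappa, \theta^*)$-cut; in particular, $(\kappa, \theta^*) \in \cts$. Since $X_\ma$ contains no $(\kappa, \theta)$-cut by hypothesis, we must have $\theta \neq \theta^*$.

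Next I would invoke Claim~\ref{m2}(1), the Uniqueness Claim, which applies because $\kappa \leq \xp_\cs$ and $\kappa < \xt_\cs$. It tells us that there is \emph{one and only one} $\lambda$ with $(\kappa, \lambda) \in \cts$. Since $(\kappa, \theta^*) \in \cts$, this unique $\lambda$ equals $\theta^*$. Combined with $\theta \neq \theta^*$, this forces $(\kappa, \theta) \notin \cts$, which is the desired conclusion.

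There is no real obstacle here: the corollary is essentially a repackaging of the content of Observation~\ref{m2a} (which guarantees that every nontrivial $\ma \in \ord(\cs)$ already realizes the unique cut cofinality paired with $\kappa$) and Claim~\ref{m2} (which says that cofinality is indeed unique). The only point worth flagging is that the hypotheses on $\kappa$ must be strong enough to trigger both earlier results simultaneously, but this is exactly what is assumed.
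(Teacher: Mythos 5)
Your proposal is correct and is exactly the paper's argument: the paper proves this corollary by citing Observation~\ref{m2a} together with the Uniqueness Claim~\ref{m2}, and your write-up simply spells out that combination (the $(\kappa,\theta^*)$-cut guaranteed in $X_\ma$ identifies the unique companion of $\kappa$, so $\theta\neq\theta^*$ forces $(\kappa,\theta)\notin\cts$). No gaps; nothing further needed.
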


Corollary \ref{n-is-enough} shows that each of the orders is in some sense a reflection of the structure of the whole cofinality spectrum problem, 
at least for the cardinals we consider. As a result, when proving results about $\cts$ we are free to work in the nontrivial $\ma$ which appears most suited to the given proof.   
Moreover:

\begin{concl} \label{cor:sym}
Let $\cs$ be a cofinality spectrum problem. 
In light of $\ref{m2}$ we may, without loss of generality, study $\mc(\cs, \xt_\cs)$ by looking at 
\[ \{ (\kappa_1, \kappa_2) : (\kappa_1, \kappa_2) \in \mc(\cs, \xt_\cs), \kappa_1 \leq \kappa_2 \} \]
\end{concl}

\noindent The proof of Theorem \ref{m2} has the following very useful corollary. 

\begin{cor}[Definable monotonic maps exist] \label{bijection} 
Let $\cs$ be a cofinality spectrum problem, $\ma, \mb \in \ord(\cs)$, and $\kappa = \cf(\kappa) \leq \xp_\cs$, 
$\kappa < \xt_\cs$.
Let $\overline{a} = \langle a_\alpha : \alpha < \kappa \rangle$ be a strictly $<_\ma$-monotonic sequence of elements of $X_\ma$, and let
$\overline{b} = \langle b_\alpha : \alpha < \kappa \rangle$ be a strictly $<_\mb$-monotonic sequence of elements of $X_\mb$.
Then in $M^+_1$ there is a definable monotonic partial injection $f$ from $X_\ma$ to $X_\mb$ whose 
domain includes $\{ a_\alpha : \alpha < \kappa \}$ and such that $f(a_\alpha) = b_\alpha$ for all $\alpha < \kappa$.
\end{cor}

\begin{proof}
Without loss of generality, $\overline{a}$ is increasing and $\overline{b}$ is decreasing (otherwise carry out this proof twice, and compose the functions). 

Carry out the proof of Theorem \ref{m2},
substituting $\overline{a}$ here for $\langle a^1_\alpha : \alpha < \kappa \rangle$ there, and $\overline{b}$ here for
$\langle a^2_\alpha : \alpha < \kappa \rangle$ there. 
Let $\overline{c} = \langle c_\alpha : \alpha < \kappa \rangle$ be the corresponding path through the definable tree 
$\mct_0$ constructed in the proof of Theorem \ref{m2}, and let
$c \in \mct_0$ be an upper bound for this path in the tree, as there. Then let $f: \overline{a} \rightarrow \overline{b}$ be the function whose graph is
$\{ (c(n,0), c(n,\ell)) : n <_\bc \lgn(c)  \}$. Clearly $f$ is definable, and the hypothesis of monotonicity in each coordinate from the proof of Theorem \ref{m2} guarantees that $f$ is an injection, which takes $a_\alpha$ to $b_\alpha$ for all $\alpha < \kappa$. 
\end{proof}

A note to model theorists:
Recall that by a characterization of Morley and Vaught the saturated models are exactly those which are both homogeneous and universal. 
Regular ultrapowers are universal for models of cardinality no larger than the size of the index set, and thus
failures of saturation come from failures of homogeneity.
Saturation of regular ultrapowers can therefore be gauged by the absence or existence
of internal structure-preserving maps between small subsets of the ultrapower.
A recurrent theme of this paper, illustrated by Corollary \ref{bijection}, is how trees assist in the building of such maps. 

\begin{cor} \label{extend}
Let $\cs_1, \cs_2$ be cofinality spectrum problems and suppose that $M^{\cs_1} = M^{\cs_2}$, 
$M^{{+,\cs}_1}_1 = M^{{+, \cs}_2}_1$. 
If $\ord(\cs_1) \cap \ord(\cs_2)$ contains a nontrivial $\ma$ $($\emph{thus} 
$\Delta^{\cs_1} \cap \Delta^{\cs_2} \neq \emptyset$$)$ then for all regular $\kappa$
with $\kappa \leq \xp_\cs$, $\kappa < \xt_\cs$, 
\[ \lcf(\kappa, \cs_1) = \lcf(\kappa, \cs_2) \]
Moreover, the same conclusion holds if $\cs \leq \cs^\prime$.
\end{cor}

Thus, without loss of generality, when computing $\cts$ for such $\kappa$ 
we may work in a larger language $($provided $M^\cs, M^\cs_1$ admit the corresponding expansion and remain an elementary pair in
the larger language$)$ and/or consider a larger set of formulas $\Delta$, provided that it meets the closure conditions of
Definition \ref{d:estt}.

\begin{cor} \label{extend2}
Given a cofinality spectrum problem $\cs$, we may assume $\ord(\cs)$ is closed under definable subsets of $X_\ma$, i.e. 
whenever $\ma \in \ord(\cs)$, $\psi(x)$ a formula in the language of $M_1$  such that
$\psi(x) \vdash$ $x \in X_\ma$, there is $\mb \in \ord(\cs)$ with $\leq_\mb = \leq_\ma$ and
\[ X_\mb = \{ a \in X_\ma : M_1 \models \psi(a) \}. \]
For definiteness, we specify that $d_\mb = \min \{ d_\ma, ~\max \{ x: \psi(x) \} \}$. 
\end{cor}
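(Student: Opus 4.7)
The proof is a direct verification that one can extend $\cs$ to a cofinality spectrum problem $\cs^\prime$ with $\cs \leq \cs^\prime$ (in the sense of Definition \ref{cs-ord}) and $\mb \in \ord(\cs^\prime)$ as specified, and then apply Corollary \ref{cs-cor} to transfer the statement about $\cts$.

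The plan: define $\vp_\mb(x, y; \bar z) := \psi(x) \wedge \psi(y) \wedge \vp_\ma(x, y; \bar c_\ma)$ with $\bar c_\mb$ the concatenation of $\bar c_\ma$ with the parameters of $\psi$; take $X_\mb, \leq_\mb, d_\mb$ exactly as the statement demands, noting that the maximum defining $d_\mb$ exists by the last-element clause of Definition \ref{d:estt}(2). Expand the vocabulary to name the required tree data for $\mb$ and call the result $\cs^\prime$. Now verify the ESTT conditions for $\mb$ in $\cs^\prime$. Conditions (1) and (3) are immediate. For (2): any nonempty $M_1$-definable subset of $X_\mb$ is a nonempty $M_1$-definable subset of $X_\ma$, so has first and last elements under $\leq_\ma$ (hence under $\leq_\mb$); in particular each non-maximal $a \in X_\mb$ has an immediate $\leq_\mb$-successor, namely the $\leq_\ma$-least element of $\{ b \in X_\mb : a <_\ma b \}$, so $\leq_\mb$ is discrete. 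The closure and pairing conditions (4) and (5) are inherited from $\ma$ by relativizing with $\psi$ (for instance, Cartesian products of the form $X_\mb \times X_{\ma^\prime}$ are obtained from $X_\ma \times X_{\ma^\prime}$ by intersecting with $\psi$ in the first coordinate). For the tree condition (6), let $\mct_\mb$ be the set of partial functions from an initial segment of $(X_\mb, \leq_\mb)$ bounded by $d_\mb$ into $X_\mb$; this set and the associated $\lgn_\mb, \xr_\mb, \tlf_\mb$ are definable in the expansion $M^+_1$ because, already for $\cs$, the trees $\mct_\ma$ are uniformly definable there by the ESTT hypothesis, and the definition of $\mct_\mb$ is just the relativization of the definition of $\mct_\ma$ to $\psi$.

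The main obstacle to keep in mind is precisely this last point: the tree $\mct_\mb$ must be available definably inside $M^+_1$ itself, not merely in some further expansion. This is exactly what the enough-set-theory-for-trees hypothesis on $\cs$ buys us: the coding of sequences in $M^+$ which gives rise to $\mct_\ma$ also gives rise to $\mct_\mb$ once the range and length are constrained to lie in the definable subset $X_\mb$. Having verified ESTT, $\cs^\prime$ is a cofinality spectrum problem with $\cs \leq \cs^\prime$ and $\ma \in \ord(\cs) \cap \ord(\cs^\prime)$ nontrivial, so Corollary \ref{cs-cor} gives $\cts = \mc^{\mathrm{ct}}(\cs^\prime)$ for every regular $\kappa \leq \xp_\cs$ with $\kappa < \xt_\cs$ and every regular $\theta$, which is the sense in which $\ord(\cs)$ may be assumed closed under $\psi$ without loss of generality.
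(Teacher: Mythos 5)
Your proposal is correct and takes essentially the same route as the paper: the paper's own proof simply enlarges $\Delta$ to $\Delta' = \{\vp \land \psi : \vp \in \Delta,\ \psi \in \tau(M^\cs)\}$, keeps the same pair $(M^+, M^+_1)$ (no vocabulary expansion is made, since the relativized orders and their trees are taken to be definable in $M^+_1$ already), and invokes Claim \ref{extend}, of which your appeal to Corollary \ref{cs-cor} is just the packaged form. Your explicit ESTT checks (discreteness, endpoints, products, relativized trees) are details the paper leaves implicit, so the only superfluous step in your write-up is the suggestion to expand the vocabulary.
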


\begin{proof}
Let $\Delta^\prime \supseteq \Delta^\cs$ be the set of $\vp^\prime(x,y,\overline{z}^\prime)$ such that 
for some $\vp(x,y,\overline{z}_1) \in \Delta$ and
$\psi(x, \overline{z}_2) \in \tau(M^\cs)$ we have that $\overline{z}^\prime = {\overline{z}_1}^\smallfrown\overline{z}_2$ 
and $\vp^\prime = \vp \land \psi$. Now apply Corollary \ref{extend}  
in the case where $\cs^\prime = (M, M_1, M^+, M^+_1, T, \Delta^\prime)$ (i.e. only the last component changes). 
Clearly, $\cs^\prime$ remains nontrivial. 
\end{proof}

In fact, for our proofs below, we use this only to assume that for $\ma \in \ord(\cs)$ 
and $a \in X_\ma$, the order $(\{ x : x \in X_\ma, x \leq a \}, <_\ma)$ may be regarded as an element of $\ord(\cs)$. 
This would be a perfectly reasonable condition to add to the definition of ESTT, though we prefer to derive it.

\section{Cofinality spectrum problems are locally saturated} \label{s:sat}

In this section we prove that cofinality spectrum problems have a certain amount of local saturation. 
Here ``local'' means partial types which are finitely satisfiable in one of the distinguished linear orders. 

\begin{defn} \label{x15}
Let $\cs$ be a cofinality spectrum problem and $\lambda$ a regular cardinal.
Let $p = p(x_0, \dots x_{n-1})$ be a consistent partial type with parameters in $M^+_1$.
We say that $p$  is an \emph{$\ord$-type over $M^+_1$} if $p$ is a consistent partial type in $M^+_1$ and
for some $\ma_0, \dots \ma_{n-1} \in \ord(\cs)$, we have that
\[ p \vdash \bigwedge_{i < n} x_i \in X_{\ma_i}. \]
We say simply that $M^+_1$ is \emph{$\lambda$-$\ord$-saturated} if every $\ord$-type over $M^+_1$
over a set of size $<\lambda$ is realized in $M^+_1$. Finally, we say that $\cs$ is {$\lambda$-$\ord$-saturated} if $M^+_1$ is.
\end{defn}

As we asked that $\ord(\cs)$ be closed under small Cartesian products,
without loss of generality we prove Theorem \ref{x16} assuming that $p = p(x)$ where $p \vdash$ $x \in X_\ma$ for some
$\ma \in \ord(\cs)$. (Note $p$ may not be a $1$-type, for instance if the formula defining $X_\ma$ has arity greater than $1$.)

\begin{theorem} \label{x16} 
Let $\cs$ be a cofinality spectrum problem. If $\kappa < \min \{ \xp_\cs, \xt_\cs \}$ then $\cs$ is 
$\kappa^+$-$\ord$-saturated.
\end{theorem}

\begin{proof}
We prove the claim by induction on $\kappa < \min \{ \xp_\cs, \xt_\cs \}$. 
Suppose that $\kappa = \aleph_0$ or that the theorem holds for all infinite cardinals $\rho < \kappa$. Suppose we are given $\ma \in \ord(\cs)$ and
$p = \{ \vp_i(x,\overline{a}_i) : i < \kappa \}$ which is finitely satisfiable in $X_\ma$. 
By a second (call it internal) induction on $\alpha \leq \kappa$, we choose $c_\alpha \in \mct_\ma$ and $n_\alpha \in X_\ma$ such that:
\begin{enumerate}
\item $n_\alpha = \lgn(c_\alpha) - 1$
\item $\beta < \alpha$ implies $c_\beta \tlf c_\alpha$
\item if $i < \beta \leq \alpha$ and $n_\beta \leq_\ma n \leq_\ma n_\alpha$, then
$M_1 \models \vp_i(c_\alpha(n), \overline{a}_i)$.
\item $c_\alpha$ is below the ceiling.
\end{enumerate}
For $\alpha = 0$ this is trivial. For $\alpha = \beta + 1$, by the external inductive hypothesis (or by the definition of a consistent type if $\kappa = \aleph_0$), 
let $d$ realize
$\{ \vp_i(x,\overline{a}_i) : i \leq \beta \}$, since $|\alpha| < \kappa$. By the internal inductive hypothesis (4), 
we may concatenate, so let $c_\alpha = {c_\beta}^\smallfrown \langle d \rangle$, $n_\alpha = n_\beta + 1$.
For $\alpha$ limit $\leq \kappa$, $\cf(\alpha) < \min \{ \xp_\cs, \xt_\cs \}$ so by Lemma \ref{c:ceiling} there is $c_* \in \mct$ such that
$\beta < \alpha$ implies $c_\beta \tlf c_*$ and $c_*$ is below the ceiling.
Let $n_* = \lgn(c_*) - 1$. Now we correct the value at the limit by restricting to a suitable initial segment which
preserves item (3).
That is, for each $i < \alpha$, define
\[ n(i) = \max \{ n \leq_\ma n_* : ~ \models \vp_i(c_*(n_*), \overline{a}_i) ~\text{for all $m$ such that }  n_i <_\ma m \leq_\ma n \} \]
As this is a bounded nonempty subset of $X_\ma$, $n(i)$ exists for each $i<\alpha$. 
By the internal inductive hypothesis (3), $n(i) {_\ma>} n_\beta$ for each $i, \beta < \alpha$. 
Thus, $( \{ n_\beta : \beta < \alpha \}, \{ n(i) : i < \alpha \})$ represents a pre-cut in $X_\ma$. 
Let $\gamma$ be the reverse cofinality of the set $\{ n(i) : i < \alpha \}$, i.e. its cofinality under the reverse order.
Necessarily $\gamma \leq |\alpha| \leq \kappa < \xp_\cs$.
If $(\cf(\alpha), \gamma) \in \mc(\cs, \xt_\cs)$, we would contradict the definition of $\xp_\cs$.
Thus, there is an element $n_{**}$ such that for all $i < \alpha$ and $\gamma < \alpha$,  
$n_\gamma <_\ma n_{**} <_\ma n(i)$. Let $c_\alpha = c\rstr_{n_{**}}$ and let $n_\alpha = \lgn(c_\alpha) - 1$.
Then by construction,
\[ i < \alpha \mbox{ implies } M_1 \models \vp_i(c_\alpha(n_\alpha), \overline{a}_i) \]
as desired, completing the limit step. As the limit case was also proved for $\alpha = \kappa$, 
$c_\kappa(n_\kappa)$ realizes the type $p$, which completes the proof.
\end{proof}

Note that Theorem $\ref{x16}$ does not require that the type consist of instances of formulas from some finite set $($a usual definition of local$)$, however 
this is a kind of local saturation in the sense that we use $p(x) \vdash x \in X_\ma$ for some suitable $\ma$.

\section{Cofinality spectrum problems have sufficient Peano arithmetic} \label{s:towards}
In this section two things are accomplished. First, we build a certain amount of arithmetic, 
which shows the naturalness of the corresponding example from Section \ref{s:examples}, and 
makes available addition and multiplication which will be useful in later proofs. 
Second, we show that for certain $\ma \in \ord(\cs)$, we may regard all definable subtrees 
$\mct \subseteq \mct_\ma \in \tr(\cs)$ as definable subsets of some $X_\mb$, $\mb \in \ord(\cs)$. 
Thus, in our future constructions we will have available more powerful trees, e.g. of sequences of finite tuples some of whose coordinates belong
to some $X_\ma$ and some of whose coordinates effectively belong to a definable tree (thus are internal partial functions on, 
or subsets of, some $X_\ma$, $\ma \in \ord(\cs)$). This could have been guaranteed simply by making 
stronger assumptions in Definition \ref{d:estt}, which would remain true in our cases of main interest. Thus, the reader interested 
primarily in models of set theory or in regular ultrapowers may wish to simply read Convention \ref{c:15}, Conclusion \ref{p:trees}, 
Convention \ref{c:d10} and continue. 
However, the analysis here shows that the necessary structure already arises from our more basic assumptions. 

\br
To begin, given $\cs$ and $\ma \in \ord(\cs)$,
we introduce notation for the relative cardinality 
of definable sets $A \subseteq X_\ma$, $B \subseteq X_\mb$ as computed by the model $M^+_1$ under consideration.

\begin{conv} \emph{(An internal notion of cardinality)} \label{c:15}
Let $\cs$ be a cofinality spectrum problem and $\ma, \mb \in \ord(\cs)$. Let $\bc = \ma \times \mb$ and let $\bij(\ma, \mb)$ be the definable
subtree of $\mct_\bc$ given by
${(\vp(\mct_\bc), \tlf_\bc)}$, where $\vp(x)$ says that 
$\{ (x(c,0), x(c,1)) : c <_\bc \lgn(x) \}$ is the graph of a partial one-to-one function from $X_\ma$ to $X_\mb$.   
We will adopt the following convention. Whenever $A$, $B$ are given as definable subsets of $X_\ma$, $X_\mb$ respectively, we write
\[  |A| \leq^\cs |B| \]
to mean ``there exists $x \in \bij(\ma, \mb)$ such that $A \subseteq \{ x(n,0) : n <_\bc \lgn(x) \}$
and $\{ x(m,1) : m <_\bc \lgn(x) \} \subseteq B$''. Likewise, we write
\[ |A| <^\cs |B|  \]
to mean ``$(|A| \leq^\cs |B|) \land \neg ( |B| \leq^\cs |A| )$,'' i.e. $|A| \leq^\cs |B|$ and there does not exist $x \in \bij(\ma)$ which
is an injection from $B$ into $A$.
\end{conv}

\begin{disc}
Does the internal notion of cardinality just described behave more like $($the cardinality of$)$ the natural 
numbers or infinite sets? For our present purposes, it behaves like pseudo-finite numbers. 
However, the same definition applies in very different contexts with different results, 
for instance in weakenings of cofinality spectrum problems studied in \cite{MiSh:F1245}, where the order is dense. 
\end{disc}

We now show that a sufficient amount of G\"odel coding is available in any CSP. More precisely, we would like to find
$\ma \in \ord(\cs)$ such that for some other $\mb \in \ord(\cs)$ extending $\ma$, the set of G\"odel codes of partial functions from 
$X_\ma$ to $X_\ma$ may be viewed as a definable subset of $X_\mb$. 
Informally, the issue is that $X_\ma$ may need to be quite small relative to $X_\mb$; 
we would like not only for statements of basic arithmetic and 
bounded induction to hold in $X_\mb$ of elements in $X_\ma$, but crucially, 
for all functions necessary for G\"odel coding of
$X_\ma$-sequences -- which, a priori, may be fast growing -- to remain available. 

\begin{lemma} \label{coding-lemma}
Let $\cs$ be a CSP and $\mb \in \ord(\cs)$. Then there is $\ma \in \ord(\cs)$ such that $X_\ma$ is an initial segment of $X_\mb$ and 
all G\"odel codes for elements of $\mct_\ma$ belong to $X_\mb$. In particular, we may identify $\mct_\ma$ with a definable subset of $X_\mb$. 
\end{lemma}

\begin{proof}
Let $\mb \in \ord(\cs)$ be given.  
We define a series of formulas in the language of $M^+_1$. 
The variables range over elements of $X_\mb$ unless otherwise indicated.

First, we define (the graph of a partial function corresponding to) addition to be the set of $(x,y,z)$ 
such that there is an element $\eta$ of $\mct_\mb$, i.e. a sequence of elements of $X_\mb$,
of length $y$ whose first element is $x$, whose last element is $z$, and which increments by one. Formally, 
$\vp_+(x,y,z) = (\exists \eta \in \mct_\mb)(\lgn(\eta) = y \land \eta(0) = x \land \eta(y-1) = z \land (\forall i)(i <\lgn(\eta) \rightarrow \eta(S(i)) = S(\eta(i)) )$. 
Note that we may do this uniformly across $\mb \in \ord(\cs)$ if we ask that $\vp_+$ also accept a fourth parameter, the defining parameter $c_\mb$ for $X_\mb$,  
and require that the formula 
$\vp_+(x,y,z)$ imply $x, y, z \in X_\mb$ and that $(\forall i)$ abbreviate $(\forall i \in X_\mb)$. We will assume this to be true, and will omit it here 
and in the remaining formulas of this proof for readability. 

Analogously, we may define multiplication 
$\vp_\times(x,y,z)$ by substituting $(i < \lgn(\eta) \rightarrow \eta(S(i)) = \eta(i)+x )$ 
in the appropriate place, i.e. requiring that the sequence increment by $x$. 
Define exponentiation 
$\vp_{exp}(x,y,z)$ by substituting 
$(i < \lgn(\eta) \rightarrow \eta(S(i)) = \eta(i) \times x )$ in the appropriate place, 
i.e. requiring that the sequence increment by a factor of $x$.  

Note that $\vp_+(x,y,z)$, $\vp_\times(x,y,z)$, and $\vp_{exp}(x,y,z)$ are graphs of partial functions, which we will refer to informally as $x+y$, $x\times y$ and $x^y$. 
Let $\theta(x)$ assert that $x+y$, $x \times y$ and $x^y$ exist for all $y<x$. 

Clearly, with addition and multiplication we can define ``$x$ is a prime.'' 
Let $\vp_4(x,y)$ assert that $x$ is the $y$th prime.
Let the formula $\vp_5(x,n,m)$ assert that $x$ is divisible by the $n$th prime precisely $m$ times, by asserting the existence of $\eta \in \mct_\mb$ of length $m$ whose first element is 
$x$, whose subsequent elements decrease by a factor of the $n$th prime and whose last element has no more such factors. 

Finally, let the formula $\vp_6(x,\eta)$ assert that $x \in X_\mb$ is a G\"odel code for $\eta$, by asserting the existence of $\eta \neq \emptyset$ 
such that $\eta \in \mct_\mb$, $x > 2$, and for all $i < \lgn(\eta)$, $x$ is divisible by the $i$th prime precisely $\eta(i)+1$ times. 

To conclude, let $\psi(w)$ be the formula: 
\[ (\forall x < w)\theta(x) \land (\forall \eta \in \mct_\mb)\left((\lgn(\eta) < w \land (\forall i < \lgn(\eta))(\eta(i) < w)) \rightarrow (\exists x)(\vp_6(x,\eta))\right) \] 
which asserts that G\"odel codes exist for all functions from $X_\mb \rstr_w$ to itself. 
Then $\psi(w)$ holds of $0_\mb$, and of all its finite successors, so it holds also on some nonstandard number $a_*$. 
Let $\ma$ be the order given by $X_\ma = (X_\mb \rstr a_*, \leq_\mb)$, and $d_\ma = \min \{ d_\mb, \max(X_\ma) \}$. Then $\ma \in \ord(\cs)$ by 
Corollary \ref{extend2}, and inherits non-triviality from $\mb$.  Thus $\ma, \mb$ are as desired.
 
The ``in particular'' clause in the statement of the Lemma is clear as this was done in a (uniformly) definable way.  
\end{proof}

\begin{defn} \emph{(Covers)} \label{d:covers}
Let $\cs$ be a cofinality spectrum problem and $\ma \in \ord(\cs)$.
\begin{enumerate}
\item Say that $\mb \in \ord(\cs)$ is a \emph{cover} for $\ma$ if all G\"odel codes for elements of $\mct_\ma$ belong to $X_\mb$.
The usual case is when $X_\ma$ is an initial segment of $X_\mb$, so is itself an element of $\ord$ by $\ref{extend2}$. 

\item We define $k$-coverable by induction on $k<\omega$.
\begin{enumerate}
\item Say that $\ma$ is $0$-coverable if $\ma \in \ord(\cs)$ is nontrivial.
\item Say that $\ma$ is $k+1$-coverable if there exists $\mb \in \ord(\cs)$ such that $\mb$ is a cover for $\ma$ and
is itself $k$-coverable.
\end{enumerate}
\item Say that $\ma$ is \emph{coverable} if it is $1$-coverable; this will be our main case.
\end{enumerate}
\end{defn}

\begin{claim} \label{c:z4}
Let $\cs$ be a cofinality spectrum problem. For each $k < \omega$ the set 
\[ \{ \ma \in \ord(\cs) : ~\ma ~\mbox{is $k$-coverable}\} \] 
is not empty.
\end{claim}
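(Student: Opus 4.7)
The plan is to proceed by induction on $k < \omega$. For the base case $k = 0$, Convention~\ref{nontrivial} guarantees that $\ord(\cs)$ contains a nontrivial element, and every nontrivial $\ma \in \ord(\cs)$ is $\geq 0$-coverable by Definition~\ref{d:covers}(2)(a).

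For the inductive step, suppose $\mb \in \ord(\cs)$ is $\geq k$-coverable. I will exhibit $\ma \in \ord(\cs)$, nontrivial, such that $\mb$ covers $\ma$; then by Definition~\ref{d:covers}(2)(b), $\ma$ is $\geq (k+1)$-coverable. The strategy is to take $\ma$ as a sufficiently small but nonstandard initial segment of $X_\mb$. By Claim~\ref{c:code} there is an initial formula $\vp(x)$ which implies $x \in X_\mb$, $x <_\mb d_\mb$, and that for all $y \leq_\mb x$, the Gödel code of any $z \in \mct_\mb$ that is a function from $[0,y]_\mb$ to $[0,y]_\mb$ belongs to $X_\mb$. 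Since $\vp$ is initial, it holds on every finite successor $S^n(0_\mb)$, so $\vp$ is a weakly initial formula for $\mb$ in the sense preceding Observation~\ref{overspill}. Applying that observation yields a nonstandard $a_* \in X_\mb$ with $M^+_1 \models \vp(a_*)$.

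Now apply Corollary~\ref{extend2} with $\psi(x) := (x \leq_\mb a_*)$ to obtain $\ma \in \ord(\cs)$ with $X_\ma = [0_\mb, a_*]_\mb$, $\leq_\ma \, = \, \leq_\mb \restriction X_\ma$, and $d_\ma = a_*$ (using $a_* <_\mb d_\mb$ from $\vp(a_*)$). Since $a_*$ is nonstandard in $X_\mb$, the set $\{d : d <_\ma d_\ma\}$ is infinite, so $\ma$ is nontrivial. To verify that $\mb$ covers $\ma$, note that any $z \in \mct_\ma$ is a sequence of length $\lg(z) \leq d_\ma = a_*$ with range contained in $X_\ma = [0_\mb, a_*]_\mb$, so $z$ may be viewed as a function from an initial segment of $[0_\mb, a_*]_\mb$ into $[0_\mb, a_*]_\mb$. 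Taking $y = a_*$ in the statement of $\vp(a_*)$ gives that the Gödel code of $z$ lies in $X_\mb$, as required by Definition~\ref{d:covers}(1).

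The main obstacle is calibrating how small the initial segment $X_\ma$ must be so that Gödel coding of $X_\ma$-indexed sequences actually lands inside $X_\mb$; since such codes can be iterated-exponential in the parameters, an arbitrary nonstandard cut below $d_\mb$ does not suffice. The correct choice is a cut defined by a Solovay-style initial formula of the form $J_1(x)$ from Fact~\ref{useful}(8), which ensures that the relativized closure axiom $\Omega_1$ needed to carry out the coding holds up to $a_*$. This is precisely the content packaged into Claim~\ref{c:code}, which is why that claim was established before the present one.
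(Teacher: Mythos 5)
Your proof is correct and follows essentially the same route as the paper: induction on $k$, with the inductive step obtained by taking the initial formula $\vp$ of Claim \ref{c:code}, extracting a nonstandard $a_*$ satisfying it, and invoking Corollary \ref{extend2} to realize the initial segment $[0,a_*]_\mb$ as a nontrivial $\ma \in \ord(\cs)$ covered by $\mb$. The only cosmetic difference is that you obtain $a_*$ by citing Observation \ref{overspill}, whereas the paper directly takes the greatest element of the definable, downward closed, infinite set $\vp(X_\mb)$ — the same one-line argument.
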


\begin{proof} 
We prove this by induction on $k < \omega$. 
In the case where $k = 0$ this is trivially true as $\Delta$ is nonempty and $\ord(\cs)$ is assumed to be nontrival. 
Suppose then that $\mb \in \ord(\cs)$ is $k$-coverable; we would like to find $\ma \in \ord(\cs)$ which is 
$k+1$-coverable, which we can do by applying Lemma \ref{coding-lemma}. 
\end{proof}

It will be useful to have the analogous result for Cartesian products.  

\begin{defn} \label{c:pair} 
Say that $\ma \in \ord(\cs)$ is \emph{coverable as a pair} $($by $\bd \in \ord(\cs)$$)$ when:
\begin{itemize}
\item[(a)] there is $\bc \in \ord(\cs)$ such that $X_\bc = X_\ma \times X_\ma$ and $\ref{d:estt}(6)$ holds of $\ma, \bc$ 
\item[(b)] $\bc$ is coverable $($by $\bd$$)$.
\end{itemize} 
\end{defn} 

\begin{cor} \label{cov-pair}
There exists $\ma \in \ord(\cs)$ which is coverable as a pair, say by $\bd \in \ord(\cs)$, and moreover this implies:
\begin{enumerate}
\item There is $a \in X_\ma$, not a finite successor of $0_\ma$, 
such that the G\"odel codes for functions from $[0,a]_\ma$ to $[0,a]_\ma \times [0,a]_\ma$ may be identified with a definable subset of $X_\bd$.
\item $\ma$ is coverable by $\bd$. 
\end{enumerate}
When $\ma$ is coverable as a pair, we may informally abbreviate condition $(1)$ by saying 
that some given $M^+_1$-definable function may be thought of as an element of $\mct_{\ma \times \ma}$.
\end{cor}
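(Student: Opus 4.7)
The plan is to produce $\ma$ by starting from the distinguished pair $(\ma_0, \bc_0)$ guaranteed by Definition \ref{d:estt}(5), then shrinking to initial segments small enough that the G\"odel coding from Claim \ref{c:code} applies inside $\bc_0$ itself. First I would fix $\ma_0, \bc_0 \in \ord(\cs)$ with $X_{\bc_0} = \pr(X_{\ma_0} \times X_{\ma_0})$ satisfying the max-by-pair condition: each $x \in X_{\ma_0}$ admits a bound $y_x \in X_{\bc_0}$ with
\[ \{ \pr(x_1, x_2) : \max(x_1, x_2) \leq_{\ma_0} x \} = [0_{\bc_0}, y_x]_{\bc_0}. \]
Next, apply Claim \ref{c:code} to $\bc_0$ to obtain an initial formula $\vp(y)$ forcing all G\"odel codes for functions from $[0,y]_{\bc_0}$ to itself to belong to $X_{\bc_0}$.

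Then I would consider $Y = \{ x \in X_{\ma_0} : \vp(y_x) \}$. Since $\vp$ is initial and $x \mapsto y_x$ is definable and monotone, $Y$ is a definable, downward-closed subset of $X_{\ma_0}$ containing every finite successor of $0_{\ma_0}$, and so has a greatest element $a_*$ which is not a finite successor. By Corollary \ref{extend2} the initial segments $X_\ma := [0, a_*]_{\ma_0}$ and $X_\bc := [0, y_{a_*}]_{\bc_0}$ give rise to $\ma, \bc \in \ord(\cs)$. Applying the max-by-pair clause of \ref{d:estt}(5) precisely at $x = a_*$ identifies $X_\bc$ with $\pr(X_\ma \times X_\ma)$ and transfers the pair property to $(\ma, \bc)$. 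Setting $\bd := \bc_0$, the defining property of $\vp$ now says that every G\"odel code of an element of $\mct_\bc$ lies in $X_\bd$, so $\bd$ covers $\bc$; hence $\ma$ is coverable as a pair, witnessed by $\bd$.

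For the \emph{moreover} clauses: for (1), I would pick $a \in X_\ma$ with weak powers via Corollary \ref{e:powers} and identify $[0, a]_\ma \times [0, a]_\ma$ with an initial segment of $X_\bc \subseteq X_\bd$ through $\pr$; the G\"odel codes of functions from $[0, a]_\ma$ into $[0, a]_\ma \times [0, a]_\ma$ then become G\"odel codes of certain elements of $\mct_\bc$, and so form a definable subset of $X_\bd$ by the choice of $\vp$. For (2), any element of $\mct_\ma$ can be viewed as an element of $\mct_\bc$ by composing with the embedding $x \mapsto \pr(x, 0_\ma)$ on the codomain (and on the index set), so its G\"odel code also lies in $X_\bd$, showing that $\bd$ covers $\ma$ directly.

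The main obstacle I anticipate is the transfer step: verifying that cutting $\bc_0$ off below $y_{a_*}$ and $\ma_0$ off below $a_*$ really preserves the Cartesian-product structure and the pair property cleanly enough to land $(\ma, \bc)$ back in $\ord(\cs)$ with the correct relationship. This reduces to a careful unpacking of Definition \ref{d:estt}(5) applied at the single element $a_*$, combined with the closure of $\ord(\cs)$ under definable subsets from Corollary \ref{extend2}; the only subtlety is checking that the pairing function $\pr$ used in $\bc_0$ remains a bijection between $X_\ma \times X_\ma$ and $[0_{\bc_0}, y_{a_*}]_{\bc_0}$, which is exactly what the max-by-pair clause delivers.
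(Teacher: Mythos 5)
Your proposal is correct and follows essentially the same route as the paper: the paper's one-line proof (``by Remark \ref{a:any} and the definition of coverable as a pair'') is exactly the argument you spell out, namely shrinking the pair $(\ma_0,\bc_0)$ from Definition \ref{d:estt}(5) to an initial segment determined by the initial Gödel-coding formula of Claim \ref{c:code}, with Corollary \ref{extend2} keeping the truncations in $\ord(\cs)$ and the original square $\bc_0$ serving as the cover $\bd$. You simply inline the proof of Claim \ref{c:z4}/Remark \ref{a:any} by pulling the initial formula back along the definable map $x \mapsto y_x$, which is a harmless (and slightly more explicit) presentation of the same idea.
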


\begin{proof} 
By Claim \ref{c:z4} (or just Lemma \ref{coding-lemma}) and the fact that 
Definition \ref{c:pair}(a) requires that the pairing function be well behaved.    
\end{proof}

When constructing trees in the continuation of the paper, we will generally build trees of pseudofinite sequences of
$n$-tuples $(n < 10)$. The components of these tuples will either belong to some $X_{\ma}$ for $\ma \in \ord(\cs)$,
or they will be definable partial functions with domain and range contained in some $X_\ma$ which is 
$k$-coverable or coverable as a pair (i.e. they will be elements of certain trees), 
or they will be domains of such functions, as has been justified by 
the work in this section: 

\begin{concl} \emph{(More powerful trees)} 
\label{p:trees} 
Let $\ma \in \ord(\cs)$ be coverable. 
We may in our future constructions consider {as} a definable subtree 
of an element of $\tr(\cs)$ any tree of sequences of finite tuples some of whose coordinates 
range over elements of a definable sub-tree of $\mct_\ma$. Without loss of generality, we also allow some coordinates to 
be the domain or range of such a function.
\end{concl}

\begin{proof} 
By Corollary \ref{n-is-enough}, to study $\mc(\cs, \xt_\cs)$ we may freely choose the order we work in from among the nontrivial elements of 
$\ord(\cs)$. By Claim \ref{c:z4} we may assume $\ma$ is coverable (or $k$-coverable if necessary), so elements of $\mct_\ma$
may be considered as elements of a definable subset of some $X_\mb$, $\mb \in \ord(\cs)$. Then the desired tree will be a definable sub-tree of 
$\mct_\mb$ or of $\mct_\bc$ where $\bc$ is a finite Cartesian product of $\mb$.

For the last line: The use of domains and ranges of such functions is entirely a matter of presentation. 
We could easily in each case use a different defining formula for the tree which did not 
list the domain as a separate component of the tuple. For more complex arguments, one could alternatively 
formalize a suitable internal notion of power set.  
\end{proof}

We now justify the naturalness of models of Peano arithmetic with bounded induction as an example of a cofinality spectrum problem. 
Let exponentiation mean in the sense of the relation from Lemma \ref{coding-lemma} (which satisfies all of the usual properties of the graph of  
exponentiation except for the sentence $(\forall x \forall y \exists z)\vp_{exp}(x,y,z)$). 

\begin{defn} \label{d:mult2} \emph{(Weak powers)} 
Let $\cs$ be a cofinality spectrum problem and $\ma \in \ord(\cs)$.
Let $e_\ma$ denote the last element of $X_\ma$. 
Say that $a \in X_\ma$ \emph{has weak powers} if $a$ is not a finite successor of $0_\ma$, there exists 
$z \in X_\ma$ such that $(a^a = z)$, and for each $\ell < \omega$,
\[ a^a ~ <_\ma ~  S^{-\ell}(e_\ma) \]
\end{defn}

\begin{rmk} \label{k-powers}
Note that Definition \ref{d:mult2} implies that $a^k <_\ma S^{-\ell}(e_\ma)$ 
for each $\ell, k < \omega$, where $a^k$ 
means $a^{S^k(0_\ma)}$.  
\end{rmk}

\begin{lemma} \label{m-pa}
Let $\cs$ be a cofinality spectrum problem, $\ma \in \ord(\cs)$, and suppose $a \in X_\ma$ has weak powers. 
Then there are definable functions $+_\ma$, $\times_\ma$ such that identifying $k$ with $S^k(0_\ma)$, the model
\[ N^*_a := (\bigcup_{k < \omega} [0_\ma, a^k)_\ma; +_\ma, \times_\ma, S^1_\ma, \leq_\ma, 0_\ma ) \]
may be regarded as a model of 
$I\Delta_0$, arithmetic with bounded induction, 
{meaning} that the arithmetic operations
are well defined on the domain of $N^*_a$ and for any bounded formula $\vp(x)$ in the language of arithmetic  
which implies $x\in X_\ma$, the following holds in $M_1$:
\begin{align*} 
(\forall x \leq a_*)(\forall \overline{y}) 
\left( \vp(0_\ma,\overline{y}) \land (\forall z \leq x)(\vp(z, \overline{y}) \rightarrow \vp(S(z), \overline{y})) \rightarrow (\forall z \leq x)\vp(z, \overline{y}) \right) 
\end{align*}
\end{lemma}

\begin{rmk}
Clearly, the domain of $N^*_a$ is not definable in $M^+_1$. 
\end{rmk}

\begin{proof} {(of Lemma \ref{m-pa})}
First, we define $+_\ma$, $\times_\ma$ as partial functions from $X_\ma \times X_\ma \rightarrow X_\ma$ and check they behave as desired. 
We could use the formulas from Lemma \ref{coding-lemma}, but it is interesting to do this another way, based only on $\ref{c:15}$. 
Let $\nt_\ma = \{ [e_1, e_2) : e_1 <_\ma e_2,~ e_1, e_2 \in X_\ma \}$. Formally, elements of $\nt_\ma$ are presented as pairs but we think of them as closed and  
open intervals. Let $E_\ma$ be the definable equivalence relation on $\nt_\ma$ given by having the same cardinality in the sense of $\cs$ (i.e. writing 
$X = \{ x : e_1 \leq_\ma x <_\ma e_2 \}$, $Y = \{ y : e_1^\prime \leq_\ma y <_\ma e_2^\prime \}$, we have that $|X| \leq^\cs |Y|$ and 
$|Y| \leq^\cs |X|$). 

Denote by $0_\ma$ the $\leq_\ma$-minimal element of $X_\ma$.
For addition, define:
\[ +_\ma(x, y) = z ~\mbox{ if } (\exists w \in X_\ma) \left( ~ w \leq_\ma z ~ \land [0_\ma, x) E_\ma [0_\ma, w) ~ \land
~ [0_\ma, y) E_\ma [w, z)  \right) \] 
We will write $1 = 1_\ma$ for the successor of $0_\ma$ in the discrete linear order $\leq_\ma$. Then clearly 
\[ S^1(w) =  +_\ma(w, 1_\ma)  \]
For multiplication, define:
\begin{align*} \times_\ma(x,y) & = z ~\mbox{ if } \\
(\exists f  \in \bij(\ma)) & \left(\dom(f) = [0,x)_\ma ~\land (\forall w <_\ma x = \lgn(f))(~[f(w), f(w+1)) E_\ma [0,y) ~) \right).
\end{align*}
Clearly $+_\ma, \times_\ma$ act like addition and multiplication except for the fact that not all pairs of elements of $X_\ma$ will have a sum or product (if
they are too large). 
This shows $N^*_a$ is a model of Peano arithmetic without induction in which every nonzero element has a predecessor. 

Recalling that $\dom(N^*_\ma) = \bigcup_{k < \omega} [0_\ma, a^k)_\ma$, we verify bounded induction on $[0_\ma, a^k]$ for each relevant formula and each $k$. 
Let $k < \omega$ be given and let $a_* = a^k$.
For any bounded formula $\psi(x)$ in the language of arithmetic, 
possibly with parameters, which implies $x \in X_\ma$, if $M^+_1 \models \psi(0_\ma)$ then the set
$\{ x \in X_\ma : (\forall z \leq_\ma x)\psi(z) \}$ is a definable nonempty subset of $X_\ma$, so has a greatest element 
$n_\psi$. If $n_\psi \leq a_*$, $(\forall z \leq n_\psi)(\psi(z) \rightarrow \psi(S(z)))$ is well defined, 
so it is either true in $M^+_1$ (in which case we contradict $n_\psi \leq a_*$) or false in $M^+_1$ 
(in which case this instance of induction is trivially true). 

This completes the proof. 
\end{proof}

\begin{defn}
Say that $\vp(x)$ is a \emph{weakly initial formula} for $\ma$ if it is a formula in the language of $M^+_1$, possibly with 
parameters, which implies $x \in X_\ma$ and which holds on $S^k(0_\ma)$ for all $k<\omega$. 
\end{defn}

\begin{claim} \label{overspill}
Let $\cs$ be a cofinality spectrum problem, $\ma \in \ord(\cs)$, $\vp(x)$ a weakly initial formula for $\ma$. 

\begin{enumerate}
\item There is a nonstandard $a \in X_\ma$ such that $M^+_1 \models \vp(a)$.  
\item We may also require that $a < S^{-\ell}(e_\ma)$ for all $\ell < \omega$. 
\end{enumerate}
\end{claim}

\begin{proof}
For part (1), 
the set $A = \{ b \in X_\ma ~:~ (\forall c \leq_\ma b) \vp(c) \}$ contains all the standard elements of $X_\ma$, and is definable, so it has a last element $a_*$, 
which is necessarily nonstandard. 
For part (2), 
let $a$ be any element realizing the countable pre-cut described by the standard elements on the left and $\{ S^{-\ell}(a_*) : \ell < \omega \}$ 
on the right.  
\end{proof}

Alternately, Theorem \ref{x16} shows that in Claim \ref{overspill} we may replace $\vp$ with 
a consistent partial type $\Sigma(x)$ closed under conjunction, each of whose formulas $($thus, finite subtypes$)$
is weakly initial for $\ma$ and such that  $|\Sigma| < \min \{ \xp_\cs, \xt_\cs \}$. 

\begin{concl}  \label{e:powers}
Let $\cs$ be a cofinality spectrum problem, $\ma \in \ord(\cs)$. There are $a \in X_\ma$ which have weak powers in the sense 
of \ref{d:mult2}, i.e. for which the hypotheses of Lemma \ref{m-pa} are satisfied. 
\end{concl}

\begin{proof} 
Let $\vp_{exp}(x,y,z) =$  ``$x^y = z$'' be the formula from Lemma \ref{coding-lemma}, let $\vp(x) = \exists y \psi(x,x,y)$,  
and apply Claim \ref{overspill} to $\vp(x)$. 
\end{proof}

\begin{conv} \label{c:d10}
In what follows, whenever the context of a background $X_\ma$ is clear, 
we freely use the partial functions $+$, $\times$, and exponentiation from Lemma \ref{m-pa} $($or Lemma \ref{coding-lemma}$)$, 
as well as $0$, $1$, $+k$, $-k$, internal cardinality $|\cdot|$ in the sense of $\ref{c:15}$ $($really a means of internally comparing two sets$)$, 
successor $S^\ell, S^{-\ell}$ from $\ref{bc}$. 
We also adopt the more usual notation $x+y$, $x \times y$. 
\end{conv}

\section{There are no symmetric cuts strictly below $\min \{ \xp^+_\cs, \xt_\cs \}$ } \label{s:symmetric}
With the results of Sections \ref{s:sat}--\ref{s:towards} $($local saturation, basic arithmetic$)$ in hand, 
we return to the main line of our investigation: the cofinality spectrum $\mc(\cs, \xt_\cs)$. 
To Uniqueness, Theorem \ref{m2}, we now add:

\begin{lemma} \label{m12}
Let $\cs$ be a cofinality spectrum problem.
Then for all regular $\kappa$ such that $\kappa \leq \xp_\cs$, $\kappa < \xt_\cs$, we have that $(\kappa, \kappa) \notin \cts$.
\end{lemma}

\begin{proof}
Without loss of generality, we fix $\ma \in \ord(\cs)$ which is coverable, say by $\mb$.  
We prove the lemma by induction on $\kappa$. Arriving to $\kappa$,
assume for a contradiction that $(\langle a_\alpha : \alpha < \kappa \rangle, \langle b_\alpha : \alpha < \kappa \rangle)$ 
represents a $(\kappa, \kappa)$-cut in $X_\ma$. Fix a definable partial injection $g$ from $X_\mb \times X_\mb$ to $X_\mb$ 
whose domain includes $X_\ma \times X_\ma$. 
(Such a function always exists as $\mct_\ma$ may be identified with a definable subset of $X_\mb$, and ordered pairs of elements of $X_\ma$
may be represented as elements of $\mct_\ma$ of length 2. 
For definiteness, we may assume $X_\ma \subseteq X_\mb$ and use $(a,b) \mapsto (a + b + 1)^2 + a$, which will 
be defined on all pairs of elements from $X_\ma$ by the choice of $X_\mb$.) 
Define $\mct$ to be the set of 
$x \in \mct_\mb$ such that: 
\begin{enumerate}
\item[(a)] $x(n)$, when defined, is $g(a,b)$ for some $a,b \in X_\ma$.  In slight abuse of notation, 
we will denote $a$ by $x(n,0)$ and $b$ by $x(n,1)$. 
\item[(b)] $n_1 <_\mb n_2 <_\mb \lgn(x)$ implies $x(n_1,0) <_\ma x(n_2, 0) < x(n_2, 1) <_\ma x(n_1,1)$. 
\end{enumerate}
Just as in the proof of Theorem \ref{m2}, we choose a sequence $c_\alpha$, $n_\alpha$ by induction on $\alpha < \kappa$ so that:
\begin{enumerate}
\item $c_\alpha \in \mct$ and $n_\alpha = \lgn(c_\alpha)$
\item $\beta < \alpha$ implies $c_\beta \tlf c_\alpha$
\item $c_\alpha$ is below the ceiling
\item $c_\alpha(n_\alpha) = g(a_\alpha, b_\alpha)$ 
\end{enumerate}
For limit $\alpha < \kappa$, Lemma \ref{c:ceiling} applies, so we may carry out the induction.  
Having defined $\langle c_\alpha : \alpha < \kappa \rangle$, as $\kappa < \xt_\cs$ we apply Claim \ref{t-def}
to choose $c \in \mct$ such that $\alpha < \kappa$ implies $c_\alpha \tlf c$. 
Let $\bn = \lgn(c)-1$.
Then by clause (b) of the definition of $\mct$, we have that for all $\alpha < \kappa$,
\[ a_\alpha = c(n_\alpha, 0) <_\ma  c(\bn, 0) <_\ma c(\bn, 1)  <_\ma c(n_\alpha, 1) = b_\alpha. \]
This contradicts the assumption that $(\langle a_\alpha : \alpha < \kappa \rangle, \langle b_\alpha : \alpha < \kappa \rangle)$
represents a $(\kappa, \kappa)$-cut in $X_\ma$.
\end{proof}

\noindent Note that Lemma \ref{m12} generalizes Lemma \ref{c:motiv}. 

\begin{lemma} \label{treetops-sym}
Let $\cs$ be a cofinality spectrum problem and $\kappa$ a regular cardinal. If $\kappa = \xt_\cs$,
then there is a definable linear order which has a $(\kappa, \kappa)$-cut.
\end{lemma}

\begin{rmk}
It is not asserted that this definable linear order is an element of $\ord(\cs)$. 
\end{rmk}

\begin{proof}[Proof of Lemma \ref{treetops-sym}.] 
By Definition \ref{treetops}, if $\kappa = \xt_\cs$ then there is some $\mct_\ma \in \tr(\cs)$ 
which contains a strictly $\tlf_\ma$-increasing sequence $\langle c_i : i < \kappa \rangle$ 
with no upper bound. 

We construct a linear order by collapsing the tree so that the presence or absence of
$\kappa^+$-treetops (upper bounds of linearly ordered sequences of cofinality
$\kappa$) corresponds to realization or omission of symmetric $(\kappa, \kappa)$-cuts. Note that
as $X_\ma$ is linearly ordered by $\leq_\ma$, we have available a definable linear ordering on the immediate successors of any given $c \in \mct_\ma$.
To simplify notation, write $\cis(c_i, c_j)$ for the common initial segment of $c_i, c_j$.

Fix two distinct elements of $X_\ma$; without loss of generality we use $0_\ma, 1_\ma$, called $0,1$, so $0 <_\ma 1$.
Let $X$ be the set $\mct_\ma \times \{ 0, 1 \}$.
Let $<_X$ be the linear order on $X$ defined as follows:
\begin{itemize}
\item If $c = d$, then $(c, 0) <_X (c, 1)$ 
\item If $c \tlf_\ma d$ and $c \neq d$, then $(c, 0) <_X (d, 0) <_X (d, 1) <_X (c, 1)$
\item If $c, d$ are $\tlf$-incomparable, then let $e \in \mct_\ma$, $n_c, n_d \in X_\ma$ be such that
$e = \cis(c,d)$ and $e^\smallfrown n_c \tlf_\ma c$ and $e^\smallfrown n_d \tlf_\ma d$. Necessarily $n_c \neq n_d$ by definition
of $e$, so for $s, t \in \{ 0, 1 \}$ we define
\[ (c, s) <_X (d,t) \iff n_c <_\ma n_d \]
\end{itemize}
(Informally speaking, each node separates into a set of matched parentheses enclosing the cone above it.)
Then $<_X$ is definable linear order on $X$ with a first and last element. 

Recalling the definition of the sequence $\langle c_i : i < \kappa \rangle$ from the beginning of the proof, it follows that
\[ (\langle (c_i, 0) : i < \kappa \rangle, \langle (c_i, 1) : i < \kappa \rangle) \]
represents a $(\kappa, \kappa)$-cut in $(X, <_X)$. This is because it is a definable linear order on a definable set in $M^+_1$ and every element
misses the cut at some initial stage.  This completes the proof.
\end{proof}

\br
Lemma \ref{treetops-sym} shows that if we were to e.g. extend our definition of cuts in $\cts$ to include the one in Lemma \ref{treetops-sym} $($or to show that
$\Delta$ may be suitably extended, so $\ref{extend}$ applies$)$, this would give a characterization of $\xt_\cs$: 
if $\kappa = \xt_\cs$ then $\xt_\cs = \min \{ \kappa ~:~ (\kappa, \kappa) \in \cts \}$, thus 
also $\xp_\cs \leq \xt_\cs$ by definition of $\xp_\cs$. This is not necessary for our present arguments;
it is established in certain cases, e.g. $\ref{sym-cuts}$ below. The full characterization is  
proved under similar additional hypotheses (which hold in 
all the main examples of the present paper) in the authors' paper \cite{MiSh:F1361}.

\section{If $\lcf(\aleph_0, \cs) > \aleph_1$ then $\lcf(\aleph_0, \cs) \geq \min \{ \xp^+_\cs, \xt_\cs \}$} \label{s:lcf-aleph-0}

As a warm-up to Section \ref{s:main-theorems}, in the present section we will prove Theorem \ref{m5}: for a cofinality spectrum problem $\cs$, if 
$\lcf(\aleph_0, \cs) \neq \aleph_1$, then either $\lcf(\aleph_0, \cs) > \xp_\cs$ or $\lcf(\aleph_0, \cs) \geq \xt_\cs$.

\begin{conv}
For $\cs$ a cofinality spectrum problem, $\ma \in \ord(\cs)$, $a_1 <_\ma a_2$ from $X_\ma$ we write
$(a_1, a_2)_\ma$ for
\[ \{ x \in X_\ma : a_1 <_\ma x <_\ma a_2 \} \subseteq X_\ma \]
and analogously for closed and half-open intervals. 
\end{conv}

\begin{conv}
Various subsequent proofs will involve the same three types of sequences: we are given
a cut represented by $(\overline{d}, \overline{e})$ and an additional sequence of constants
$\overline{a}$. We standardize indexing as follows:
\begin{itemize}
\item $\overline{d} = \langle d_\epsilon : \epsilon < ... \rangle$
\item $\overline{e} = \langle e_\alpha : \alpha < ... \rangle$
\item $\overline{a} = \langle a_i : i < ... \rangle$
\end{itemize}
The proofs also involve building sequences $\langle c_\alpha : \alpha < ... \rangle$ of elements in a given tree, where the definition of
$c_\alpha$ generally depends on $e_\alpha$, as the index suggests.
\end{conv}

\br
We begin with a preliminary claim.  Our main case is $\kappa = \aleph_0$, but it will be useful to have the general statement.

\begin{claim} \label{m5b} 
Let $\cs$ be a cofinality spectrum problem.
Suppose that we are given:

\begin{enumerate}
\item $\ma \in \ord(\cs)$, which is coverable as a pair by $\mb$ 
\item $e <_\ma e^\prime \in X_\ma$ infinitely far apart 
\item $\kappa^+ <  \min \{ \xp_\cs, \xt_\cs \}$ 
\item $\{ a_i : i < \kappa^+ \}$ a set of pairwise distinct elements of $X_\ma$
\item $\gamma = \cf(\gamma) < \min \{ \xp_\cs, \xt_\cs \}$  
\item $\langle f_\beta : \beta < \gamma \rangle$ a sequence of definable partial injections from $X_\ma$ into $X_\ma$, 
such that for all $\beta < \gamma$,
\[ \dom(f_\beta) 
\supsetneq \{ a_i : i < \kappa^+ \} \]
\end{enumerate}

Then there exists a definable partial injection $f$ such that:
\begin{itemize}
\item[(a)] $a_i \in \dom(f)$ for all $i < \kappa^+$
\item[(b)] $\dom(f) \subseteq \dom(f_\beta)$ for all $\beta < \gamma$
\item[(c)] $\rn(f) \subseteq (e, e^\prime)_\ma$
\end{itemize}
Moreover, we may identify $f$ with an element of $X_\mb$. 
\end{claim}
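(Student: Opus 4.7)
\emph{Proof proposal.} The plan is to invoke the local saturation result, Claim \ref{x16}, to realize a partial type describing $f$, rather than performing a further inductive tree construction. The parenthetical remark preceding the claim already signals this approach: the $f_\beta$ need not be uniformly definable nor live in some fixed $\mct_\ma$, because we never need to build a path through a tree; we only need to realize a small type over $M^+_1$.

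First I would fix, by Claim \ref{c:z4} and Conclusion \ref{p:trees}, some $\mb \in \ord(\cs)$ covering a pair over $\ma$, so that the tree $\bij(\ma)$ of Convention \ref{c:15} may be identified with an $M^+_1$-definable subset of $X_\mb$. For each $\beta < \gamma$ let $\theta_\beta(y)$ be a formula (possibly with parameters) defining $\dom(f_\beta)$ in $M^+_1$. Define the partial type $p(x)$ in the variable $x \in X_\mb$:
\begin{enumerate}
\item[(i)] a single formula asserting $x \in \bij(\ma)$ together with $(\forall n <_\mb \lgn(x))(e_0 <_\ma x(n,1) <_\ma e_1)$;
\item[(ii)] for each $i < \kappa^+$, the formula $(\exists n <_\mb \lgn(x))(x(n,0) = a_i)$;
\item[(iii)] for each $\beta < \gamma$, the formula $(\forall n <_\mb \lgn(x))(\theta_\beta(x(n,0)))$.
\end{enumerate}
Then $p$ is an $\ord$-type, and $|p| \leq \kappa^+ + \gamma < \min\{\xp_\cs, \xt_\cs\}$ since this cardinal is regular and exceeds both $\kappa^+$ and $\gamma$.

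Next I would verify finite satisfiability. Given a finite subtype $p_0 \subseteq p$ mentioning $a_{i_1}, \dots, a_{i_n}$ and $\beta_1, \dots, \beta_m$, set $\beta^* := \max\{\beta_1, \dots, \beta_m\}$. By hypothesis (6) the sequence $\langle \dom(f_\beta) : \beta < \gamma \rangle$ is strictly decreasing under inclusion, so $\dom(f_{\beta^*}) \subseteq \dom(f_{\beta_l})$ for each $l \leq m$, and $\{a_{i_j} : j \leq n\} \subseteq \{a_i : i < \kappa^+\} \subseteq \dom(f_{\beta^*})$. Because $e_0, e_1$ are infinitely far apart, $(e_0, e_1)_\ma$ contains at least $n$ elements, so we can define explicitly a partial 1-to-1 function with domain $\{a_{i_1}, \dots, a_{i_n}\}$ and range in $(e_0, e_1)_\ma$; the corresponding element of $X_\mb$ satisfies $p_0$. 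Applying Claim \ref{x16} then yields an $f \in M^+_1$ realizing $p$, which is the desired definable bijection: conditions (a), (b), (c) are precisely the families (ii), (iii), (i) of the type.

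The main work, such as it is, lies in setting up (i)--(iii) as bona fide formulas in the language of $M^+_1$, which is what forces the passage to a covering $\mb$ via \ref{p:trees}; the genuine combinatorial content of Claim \ref{m5b}, namely that ``enough room'' is available, is contained in the infinite separation of $e_0$ and $e_1$ together with the nestedness of the $\dom(f_\beta)$, both of which make the finite-satisfiability check essentially immediate.
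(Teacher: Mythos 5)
Your proposal is correct and follows essentially the same route as the paper: the paper also writes conditions (a)--(c) as an $\ord$-type (concentrated on a definable tree of partial bijections from $X_\ma$ into $(e_0,e_1)_\ma$), verifies finite satisfiability exactly as you do, by taking the maximal mentioned $\beta$, using the nested domains to place the finitely many $a_i$'s inside it, and using the infinite gap between $e_0$ and $e_1$ to supply distinct targets, and then applies local saturation, Claim \ref{x16}. The only cosmetic difference is that you route the coding through a covering $\mb$ via \ref{c:z4}/\ref{p:trees}, whereas the paper notes coverability is not needed here since only \ref{x16} (not treetops) is invoked.
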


\begin{proof} 
We have assumed $\ma$ is coverable as a pair by $\mb$, Definition \ref{c:pair}, which assumes we fix an 
appropriate $\bc \in \ord(\cs)$ be 
such that $X_\bc = X_\ma \times X_\ma$ and $\mb$ covers $\bc$.   
Let $\mct \subseteq \mct_\bc$ 
be the definable subtree of elements $x$ such that 
$\rn(x)$ is the graph of a partial injection from $X_\ma$ into $(e, e^\prime)_\ma$, which we denote $g_x$. 
Let $p$ be the partial type of an element $x \in \mct$ whose associated $g_x$ satisfies the conditions (a)-(c) on $f$ given in the statement of the Claim. 
By hypothesis, elements of $\mct$ may be identified with elements of $X_\mb$.  
Thus, to show that $p$ is a partial $\ord$-type in $M^+_1$ of size 
$\kappa^+ + |\gamma| <  \min \{ \xp_\cs, \xt_\cs \}$, it will suffice to show that $p$ is finitely satisfiable in $\mct$.

Consider an arbitrary but fixed finite subset $p_* \subseteq p$ involving only $\{ f_\beta : \beta \in \sigma \}$ and
$\{ a_i : i \in \tau \}$ for some fixed $\sigma \in [\gamma]^{<\aleph_0}$ and $\tau \in [\kappa^+]^{<\aleph_0}$.  
By assumption (6) of the Claim,
$\{ a_i : i \in \tau \} \subseteq \bigcap_{\beta \in \sigma} \dom(f_{\beta})$. List this set as $\{ a_{i_k} : k < |\tau| \}$.
By assumption (2), there exist $|\tau|$ distinct elements $\{ b_{k} : k < |\tau| \}$ of the interval $(e, e^\prime)_\ma$.
Let $c_* = \langle a_0, b_0 \rangle^\smallfrown \cdots ^\smallfrown\langle a_{i_{|\tau|-1}}, b_{|\tau|-1}\rangle$.  
Note that this function will be an element of $\mct$ by closure under concatenation and the hypothesis that $\ma$ is nontrivial 
(thus also $\bc$ is nontrivial, by choice of $\bc$ in \ref{c:pair}). Thus, $c_*$ realizes $p_*$. 
This completes the proof that $p$ is finitely satisfiable in $\mct$.

By $\ord$-saturation, Theorem \ref{x16} above, $p$ is realized by some $c$. Let $g_c$ be the associated definable partial function 
described in the first paragraph of the proof. Then $f = g_c$ is as required. 
\end{proof}

Note that in the proof just given, we did not require the functions $f_\beta$ to be uniformly definable nor to  
belong to $\mct_\bc$. 

\begin{lemma}  \label{m5-subclaim}
Let $\cs$ be a cofinality spectrum problem and $\aleph_1 < \lambda = \cf(\lambda) \leq \xp_\cs$, $\lambda < \xt_\cs$.
If
\[ ~\lcf(\aleph_1, \cs) \geq \aleph_2 ~~\mbox{ and } ~\lcf(\aleph_0, \cs) \geq \lambda \]
then $\lcf(\aleph_0, \cs) > \lambda$.
\end{lemma}

\begin{proof} Assume for a contradiction that $\lcf(\aleph_0, \cs) = \lambda$. 
Note that the hypotheses of the claim guarantee that $\aleph_1 < \min \{ \xp_\cs, \xt_\cs \}$. 
Since we may choose any order in which to work, let $\ma \in \ord(\cs)$ be coverable as a pair and let $\mb \in \ord(\cs)$ be a cover for it.
Without loss of generality, we may choose
\[ (\langle d_\epsilon : \epsilon < \omega \rangle, \langle e_\alpha : \alpha < \lambda \rangle) \]
representing an $(\aleph_0, \lambda)$-cut in $X_\ma$ such that $d_0$ is the $\leq_\ma$-least element of $X_\ma$ and  
$d_{\epsilon+1} = S^1(d_\epsilon)$ for $\epsilon < \omega$, whereas the elements of $\overline{e}$ are 
infinitely far apart: $S^k(e_{\alpha + 1}) < e_\alpha$ for each $\alpha < \lambda$ and $k < \omega$. 
This can be done as follows. Let  $\langle d_\epsilon : \epsilon <\omega \rangle$ be as specified; 
by choice of $X_\ma$, $X := X_\ma \setminus \{ d_\epsilon : \epsilon < \omega \} \neq \emptyset$
so by uniqueness there is a sequence $\overline{e} = \langle e_\alpha : \alpha < \lambda \rangle$ coinitial in $X$. If necessary,
replace $\overline{e}$ with $\langle e_{\omega \cdot \alpha} : \alpha < \lambda \rangle$ (ordinal product) to achieve the spacing. 

To complete the preliminaries, choose a sequence
$\langle a_i : i < \aleph_1 \rangle$ of distinct elements of $X_\ma$, which exists by Claim \ref{o:incr} 
since a fortiori $\aleph_1 \leq \xp_\cs$.

Let $\bd \in \ord(\cs)$ be such that
$X_\bd = X_\ma \times X_\ma \times X_\mb \times X_\mb$.
Let $\mct_2$ be the subtree of $\mct_\bd$ consisting of elements $x$ such that: 
\begin{enumerate}
\item[(a)] $x(n,0) <_\ma x(n,1)$ are from $X_\ma$
\item[(b)] $x(n,2)$ is a subset of $X_\ma$, equal to $\dom(x(n,3))$
\item[(c)] $x(n,3)$ is a 1-to-1 function from $x(n,2)$ into the interval $(x(n,0), x(n,1))_\ma$
\item[(d)] $n_1 <_\bd n_2$ in $\dom(x)$ implies $x(n_2,1) <_\ma x(n_1, 0)$ and $x(n_2, 2) \subsetneq x(n_1, 2)$
\end{enumerate}

\br
Recalling the choice of $\langle e_\alpha : \alpha < \lambda \rangle$, 
we will now choose $c_\alpha \in \mct_2$, $n_\alpha \in X_\bc$
by induction on $\alpha < \lambda$ such that:
\begin{enumerate}
\item[(i)] $\beta < \alpha$ implies $c_\beta \tlf c_\alpha$
\item[(ii)] $n_\alpha = \maxdom(c_\alpha))$
\item[(iii)] each $c_\alpha$ is below the ceiling
\item[(iv)] $e_{\alpha+1} \leq_\ma c_\alpha(n_\alpha, 0) <_\ma c_\alpha(n_\alpha, 1) \leq_\ma e_\alpha$
\item[(v)] $a_i \in c_\alpha(n_\alpha, 2)$ for each $i < \aleph_1$. 
\end{enumerate}
Let us carry out the induction: 

\emph{For $\alpha = 0$:} Apply Claim \ref{m5b} with ${\kappa^+ = \aleph_1}$, ${\gamma = 0}$, $e = e_1$, $e^\prime = e_0$ to obtain 
a suitable partial injection $g$, which can be coded as element of $X_\mb$, and whose domain contains $\{ a_i : i < \aleph_1 \}$. 
Let $c_0 = \langle e_1, e_0, \dom(g), g \rangle$ and let  $n_0 = 0$.  

\br
\emph{For $\alpha = \beta + 1$:} 
Likewise, apply Claim \ref{m5b} with ${\kappa^+ = \aleph_1}$, ${\gamma = 1}$, $f_0 = c_\beta(n_\beta, 3)$, $e = e_{\alpha+1}$, $e^\prime = e_{\alpha}$ to 
obtain a definable partial injection $g \in X_\mb$ whose domain includes $\{ a_i : i < \aleph_1 \}$
and whose range is included in the interval $(e_{\alpha+1}, e_\alpha)$. 
Let $c_\alpha = {c_\beta}^\smallfrown\langle e_\alpha, e_\beta, \dom(g), g \rangle$. Let $n_\alpha = n_\beta+1$.

\br
\emph{For $\alpha < \lambda$ limit:}
As $\cf(\alpha) \leq \alpha < \lambda$ thus $\cf(\alpha) < \min \{ \xp_\cs, \xt_\cs \}$, by Lemma \ref{c:ceiling} there is $c \in \mct_2$ such that
$\beta < \alpha$ implies $c_\beta \tlf c$ and $c$ is below the ceiling. 
Then 
\[ \{ n : n <_\bc \lgn(c), ~M^+_1 \models e_\alpha \lneq_\ma c(n,0) ~ \} \]
is a bounded nonempty subset of $X_\bc$ which contains $n_\beta = \maxdom(c_\beta))$ for all $\beta < \alpha$.
Let $n_*$ be its maximal element (necessarily $n_* > n_\gamma$ for all $\gamma \leq \beta$). 
Once more, let $g$ be the definable partial injection given by Claim \ref{m5b} when we use: 
\begin{enumerate}
\item $e_{\alpha +1}, e_\alpha$ for $e, e^\prime$ in $\ref{m5b}(2)$
\item $\kappa^+ = \aleph_1$ in $\ref{m5b}(3)$
\item $\{ a_i : i < \aleph_1\}$ in $\ref{m5b}(4)$
\item $\cf(\alpha)$ for $\gamma$ in $\ref{m5b}(5)$, noting that $\cf(\alpha) < \lambda$ thus $\cf(\alpha) < \min \{ \xp_\cs, \xt_\cs \}$ 
\item $\langle c(n_\zeta,3): \zeta \in X \rangle$ for the sequence of functions in $\ref{m5b}(6)$, where $X$ is any sequence cofinal in $\alpha$. 
\end{enumerate} 
Let $c_\alpha = \langle e_{\alpha +1}, e_\alpha, \dom(g), g \rangle$ and let $n_{\alpha+1} = \maxdom(c_\alpha))$. 

This completes the inductive construction of the sequence $\langle c_\alpha : \alpha < \lambda \rangle$.  
As $\lambda < \xt_\cs$, by Claim \ref{t-def} there is $c \in \mct_2$ such that $c_\alpha \tlf c$ for all $\alpha < \lambda$. 
Let $m = \maxdom(c))$. 
By the choice of $(\bar{d}, \bar{e})$, necessarily both $c(m, 0) <_\ma c(m, 1)$ are elements of the sequence $\overline{d}$.
For each $i < \aleph_1$, the set of $m^\prime \leq_\bc m$ such that $M^+_1 \models a_i \in c(m^\prime, 2)$ is definable, 
bounded and nonempty, so let $m_i \leq_\bc m$ be its maximal element. By our construction, $n_\alpha \leq_\bc m_i$ for each 
$i < \aleph_1$ and each $\alpha < \lambda$.  
Thus, for each $i < \aleph_1$,  also $c(m_i, 0) <_\ma c(m_i, 1)$ are elements of the countable sequence $\overline{d}$. By the pigeonhole
principle, there is $d_{\star} \in \overline{d}$ such that {$\uu := \{ i < \aleph_1 : c(m_i, 1) = d_{\star} \}$} is uncountable. 
Note that by our choice of $\bar{d}$, the element $d_*$ is finite, so in particular the set $[0, d_*]_\ma$ 
of predecessors of $d_*$ is finite. There are two cases. 
If there exists an uncountable (or just: infinite) $\uu^\prime \subseteq \uu$ on which the function $i \mapsto m_i$ is constant, let 
$m_*$ denote this constant value. In this case $c(m_*, 3)$ is a definable injection whose domain includes $\{ a_i : i \in \uu^\prime \}$ 
and whose range is contained in the finite set of predecessors of $d_*$, a clear contradiction. 
Otherwise, we may find an infinite $\uu^\prime \subseteq \uu$ on which the function $i \mapsto m_i$ is one-to-one. 
Let $g$ be the function given by $a_i \mapsto c(m_i, 3)(a_i)$. Then $g$ is one-to-one on $\{ a_i : i \in \uu^\prime \}$   
because $k <_\bc l$ implies $c(l, 0) <_\ma c(l,1) <_\ma c(k,0) <_\ma c(k,1)$.  Moreover, $g(a_i) \leq_\ma d_*$ for all $i \in \uu^\prime$. 
Let $\uu^{\prime\prime}$ be any subset of $\uu^\prime$ of cardinality $d_* + 2$. Then $g \rstr \{ a_i : i \in \uu^{\prime\prime} \}$ 
is a definable injection of a finite set of size $d_*+2$ into a finite set of size $d_*+1$, a clear contradiction. This completes the proof. 
\end{proof}

We arrive to the main result of the section, Theorem \ref{m5}, whose statement suggests that to show $(\aleph_0, \lambda) \notin \mc(\cs, \xt_\cs)$
the hardest cut to rule out is the cut $(\aleph_0, \aleph_1)$. This intuition will be borne out in later cases. 

\begin{theorem} \label{m5}
Let $\cs$ be a cofinality spectrum problem. 
If $\lcf(\aleph_0, \cs) \neq \aleph_1$, then either $\lcf(\aleph_0, \cs) > \xp_\cs$ or $\lcf(\aleph_0, \cs) \geq \xt_\cs$.
\end{theorem}

\begin{proof} 
We will prove that if $\lcf(\aleph_1, \cs) \geq \aleph_2$ then $\lcf(\aleph_0, \cs) > \lambda$ for all regular $\lambda$ such that
$\aleph_1 < \lambda \leq \xp_\cs$ and $\lambda < \xt_\cs$.  
This is equivalent to the statement of the theorem because $\lcf(\aleph_1, \cs) \geq \aleph_2$ precisely when $(\aleph_0, \aleph_1) \notin \mc(\cs, \xt_\cs)$, 
recalling Theorem \ref{m2} (Uniqueness) and Lemma \ref{m12} (Anti-Symmetry). 

We prove the claim by induction on $\lambda \geq \aleph_2$. 
Recall that by Corollary \ref{lcf-cor}, for regular cardinals $\gamma_1, \gamma_2 < \xt_\cs$,  
$\lcf(\gamma_1, \cs) = \gamma_2$ iff  $(\gamma_1, \gamma_2) \in \cts$ iff $\lcf(\gamma_2, \cs) = \gamma_1$.

\br
\emph{Base case: $\lambda = \aleph_2$}. 
First, $(\aleph_0, \aleph_0) \notin \cts$ by Lemma \ref{m12}. 
Second, by hypothesis, $\lcf(\aleph_1, \cs) \geq \aleph_2$. So by uniqueness, Theorem \ref{m2}, there is
$\kappa \geq \aleph_2$ such that $(\aleph_1, \kappa) \in \cts$ \emph{thus} $(\aleph_0, \aleph_1) \notin \cts$.
Thus by Corollary \ref{lcf-cor}, $\lcf(\aleph_0, \cs) \geq \aleph_2$. We then apply Lemma \ref{m5-subclaim} 
to conclude $\lcf(\aleph_0, \cs) > \aleph_2$.

\br
\emph{Inductive case.} In both the successor and limit stages, by inductive hypothesis, $\lcf(\aleph_0, \cs) > \kappa$ 
for all $\kappa < \lambda$. Thus $\lcf(\aleph_0, \cs) \geq \lambda$.
By Lemma \ref{m5-subclaim}, $\lcf(\aleph_0, \cs) > \lambda$ as desired.
\end{proof}

\section{There are no asymmetric cuts strictly below $\min\{ \xp^+_\cs, \xt_\cs \}$} \label{s:main-theorems}

In this section we substantially generalize the results of the previous section 
to prove Theorem \ref{last-cut} on asymmetric cuts.   
We assume $\xp_\cs < \xt_\cs$ (though we will 
state where this is used), as our background goal is to show that $\mc(\cs, \xt_\cs) = \emptyset$, which is clearly true if
$\xp_\cs \geq \xt_\cs$ by definition of $\xp_\cs$. 
We also use upgraded trees, following \ref{p:trees}, without further comment. 

\begin{disc} \label{big-picture} \emph{(Strategy for \ref{last-cut})}
\emph{Theorem \ref{m5} above, an early prototype of Theorem \ref{last-cut}, proved that under certain conditions $\lcf(\aleph_0) > \lambda$. The key inductive 
step in the proof involved assuming, for a contradiction, that 
equality held. We fixed a representative $(\bar{d}, \bar{e})$ of an $(\aleph_0, \lambda)$-cut in some $X_\ma$, where the elements of $\overline{e}$ were infinitely far apart 
and the elements of $\overline{d}$ were all finite, and fixed a distinguished uncountable set $\{ a_i : i < \aleph_1 \}$.  
We considered a tree $\mct$ where $x \in \mct$ and $n \leq \maxdom(x))$ meant that $x(n)$ gave essentially the data of a (uniformly) definable partial injection 
along with its domain, which included $\{ a_i : i < \aleph_1 \}$, and an interval of $X_\ma$ containing its range. As $n \leq \maxdom(x))$ grew the 
domains of these partial functions formed a decreasing sequence and the intervals bounding the range were pairwise disjoint and moved towards the left. 
We built a path $\langle c_\alpha : \alpha < \lambda \rangle$ through this tree such that evaluated 
on the maximal element of its domain, $c_\alpha$ gave an injection into the interval $(e_{\alpha+1}, e_\alpha)$. Informally, we were carrying the uncountable set 
$\{ a_i : i < \aleph_1 \}$ left along the right-hand side of the cut. By treetops, this sequence $\langle c_\alpha : \alpha < \lambda \rangle$ had an upper bound $c$. 
Since $(\overline{d}, \overline{e})$ was a cut, the elements $\{ a_i : i < \aleph_1 \}$ each overspilled into the domains of partial injections 
into intervals bounded on the right by elements of $\overline{d}$. 
From this one could find a partial injection of some large set into a finite set for the contradiction.}  

\emph{The main result of this section, Theorem \ref{last-cut}, significantly generalizes that construction to show that $\lcf(\kappa) > \lambda$ whenever $\kappa < \lambda \leq \xp_\cs <\xt_\cs$.  A first issue in this more general case is that when $\kappa > \aleph_0$,  it is not a priori sufficient to carry a set of size $\kappa^+$ into a $\kappa$-indexed sequence to obtain a contradiction. After all, elements of the $\kappa$-indexed sequence may be far apart (consider e.g. the diagonal embedding of $\kappa$ in a regular ultrapower of $(\kappa, <)$).  
Our control on size will need to come from internal cardinality in the sense of \ref{c:15}. A second issue arises in the case where $\kappa^+ = \lambda$, the most subtle case of all. (Note that in \ref{m5}, we had to assume the case $(\aleph_0, \aleph_1)$ did not occur.) 
Our mechanism there for carrying the set of size $\kappa^+$ through limit stages, \ref{m5b}, required $\kappa^+ < \xp_\cs$ in order 
to apply $\ord$-saturation (also $\kappa^+ <\xt_\cs$, but recall we are assuming $\xp_\cs < \xt_\cs$). 
Clearly, this is not satisfied when $\kappa^+ = \lambda = \xp_\cs$. In Theorem \ref{last-cut}, this is solved by gradually growing the size of the set we carry (called there 
$\{ y_{\beta + 1} : \beta < \alpha \cap \kappa^+ \}$) so that it has size $\leq \kappa$ at each inductive stage $\alpha$
when $\kappa^+ = \lambda$. It will grow all the way to $\kappa^+$ if $\kappa^+ < \lambda$, but then it is not a problem.  
The construction of the present section is built to coordinate these various requirements.} 
\end{disc}

We begin by fine-tuning the construction of cuts. The idea of Claim \ref{o:seq} is that what was essential about 
$( \overline{d}, \overline{e} )$ in Lemma \ref{m5-subclaim} was that the elements of $\overline{d}$ 
increased in internal cardinality in the sense of \ref{c:15} and those of $\overline{e}$ were sufficiently spaced. Here we show how to specify a minimum spacing for elements
in the sequence representing the cut without losing the fact that (from the point of view of $\cs$) the cardinality genuinely grows. 
Recall that addition and multiplication are available following Section \ref{s:towards}. 

\begin{claim} \label{o:seq}
Let $\cs$ be a cofinality spectrum problem, $\ma \in \ord(\cs)$, $\kappa \leq \xp_\cs$, 
$(\kappa, \lambda) \in \mc(\cs, \xt_\cs)$.
Let $f: X_\ma \rightarrow X_\ma$ be multiplication by 2. 
Then we may choose sequences
$\langle d_\epsilon : \epsilon < \kappa \rangle$, $\langle e_\alpha : \alpha < \lambda \rangle$ of elements of 
$X_\ma$ such that
\[ ( \langle d_\epsilon : \epsilon < \kappa \rangle, \langle e_\alpha : \alpha < \lambda \rangle ) \]
represents a $(\kappa, \lambda)$-cut, and moreover:
\begin{enumerate}
\item  $\alpha < \lambda$ implies $f(e_{\alpha+1}) <_\ma e_\alpha$
\item  for each $\epsilon < \kappa$ there is $\delta = \delta(\epsilon)$, $\epsilon < \delta < \kappa$ such that
\[ M^+_1 \models |d_\epsilon| <^\cs |d_\delta| \]
where $|d| := | \{ x \in X_\ma : x <_\ma d \}|$ and cardinality is meant the sense of \ref{c:15}.
\end{enumerate}
\end{claim}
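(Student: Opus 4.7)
My plan is to start with some $(\kappa,\lambda)$-representation $(\overline{d}^0, \overline{e}^0)$ of the cut, existing by hypothesis, and modify each sequence to satisfy the two properties separately.

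For property (2), I claim it is essentially automatic. In the sense of Convention \ref{c:15}, the identity injection on initial segments yields $|d^0_\epsilon| \leq |d^0_{\epsilon+1}|$ whenever $d^0_\epsilon <_\ma d^0_{\epsilon+1}$, and strict inequality follows from pigeonhole for internal definable injections, which is available in the bounded arithmetic structure developed in \S \ref{s:towards} (using Claim \ref{c:code} to G\"odel-code sequences, together with the standard fact that $I\Delta_0 + \Omega_1$ proves $\Delta_0$-pigeonhole as given in Fact \ref{useful}). Hence taking $d_\epsilon := d^0_\epsilon$ and $\delta(\epsilon) := \epsilon + 1$ suffices.

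For property (1), I construct $\overline{e}$ by transfinite recursion on $\alpha < \lambda$, noting that the condition $f(e_{\alpha+1}) <_\ma e_\alpha$ is only required at successor pairs. At a limit $\alpha$, any element of the upper cut strictly below all prior $e_\beta$ suffices, and such an element exists by coinitiality since $|\alpha| < \lambda$. At a successor $\alpha = \beta + 1$, I choose $e_\alpha$ in the upper cut satisfying $e_\alpha <_\ma e_\beta/2$, which is possible precisely when $e_\beta/2$ lies strictly above the cut.

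The main obstacle is maintaining the invariant that $e_\beta/2$ remains above the cut throughout all $\lambda$ successor stages, since each halving consumes one ``logarithmic unit'' of separation from the cut; without sufficient initial room the sequence collapses into the cut before reaching length $\lambda$. I address this by exploiting that the claim only requires $(\overline{d}, \overline{e})$ to represent \emph{some} $(\kappa, \lambda)$-cut, not the original one. Using Claim \ref{bijection} together with the uniqueness from Claim \ref{m2}, I transfer the cut to one whose upper side begins high in $X_\ma$. To secure the required room, I invoke the coverability framework of \S \ref{s:towards}: by Claim \ref{c:z4}, $\ma$ may be taken $\geq k$-coverable for $k<\omega$ large, and by Claim \ref{e:powers} there is an element with weak powers, so combined with the iterated length function $|\cdot|^{(n)}_*$ from Fact \ref{useful} I can position $e_0 \in X_\ma$ whose iterated base-2 length dominates $\lambda$. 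This provides enough headroom for $\lambda$ successive halvings to stay strictly above the (transferred) cut, closing the induction.
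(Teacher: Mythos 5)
Your plan for clause (1) has a genuine gap. You acknowledge that the obstacle is keeping $e_\beta/2$ above the cut through all $\lambda$ stages, but your remedy --- transferring the cut via Claims \ref{bijection} and \ref{m2} and then choosing a starting point $e_0$ with huge ``headroom'' --- misdiagnoses the obstruction. The difficulty is not the size of $e_0$ but the multiplicative structure of the cut itself near its boundary: since $\overline{e}$ must be \emph{coinitial} in the upper class $C_2$, the sequence is forced arbitrarily close to the cut, and at every successor stage (including late ones) you still need some element of $C_2$ below $e_\beta/2$. There are $(\kappa,\lambda)$-cuts for which this is simply impossible: e.g.\ a cut of ``additive'' type whose lower class is cofinally of the form $\{x : x \leq_\ma a + k,\ k \in \omega\}$ for a nonstandard $a$. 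There $2a$ lies in $C_2$, so coinitiality forces some $e_\alpha \leq_\ma 2a$, and then any $e_{\alpha+1}$ with $f(e_{\alpha+1}) <_\ma e_\alpha$ falls into $C_1$. No choice of a high starting point, and no order-preserving transfer of the \emph{same} cut, can repair this; what is needed is a different $(\kappa,\lambda)$-cut whose lower class is (cofinally) closed under multiplication by $2$. This is exactly what the paper's proof manufactures: it defines a subtree of $\mct_\ma$ whose branches are $f$-spaced and cardinality-increasing at \emph{every} consecutive pair of positions, builds a $\kappa$-branch through it, takes a treetop $c$, uses Uniqueness (\ref{m2}) to find a $(\kappa,\lambda)$-cut among the \emph{positions} below $\lgn(c)$, and reads off $\overline{d},\overline{e}$ as values of $c$ at those positions, so both (1) and (2) are inherited from the branch. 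Your proposal contains no mechanism that produces such a cut, and your appeal to ``iterated base-2 length dominating $\lambda$'' is not meaningful, since $\lambda$ is an external cardinal, not an element of $X_\ma$.

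Clause (2) is also not ``essentially automatic'' as you claim. In the sense of Convention \ref{c:15}, $|d_\epsilon| < |d_{\epsilon+1}|$ requires the nonexistence of a tree-coded injection of $[0,d_{\epsilon+1})_\ma$ into $[0,d_\epsilon)_\ma$, i.e.\ an exact internal pigeonhole principle at possibly nonstandard, consecutive elements. Fact \ref{useful} contains no pigeonhole statement, and provability of the exact $\Delta_0$-pigeonhole principle in $I\Delta_0+\Omega_1$ is a well-known open problem (Paris--Wilkie--Woods give only the weak version); so the citation does not support the step. One might attempt an internal pigeonhole argument from the least-element principle for definable subsets of $X_\ma$ plus the coding machinery of \S\ref{s:towards}, but that is a nontrivial argument you would have to supply, and the paper deliberately avoids it: condition (iii) of its tree formula, together with the notion ``cardinality $f$-grows above $a$'' anchored at the standard finite successors of $0_\ma$ (where pigeonhole is absolute), builds the cardinality growth directly into the construction. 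Finally, note that your two fixes are in tension: you keep the original $\overline{d}^0$ for (2) but move the cut for (1), and you never reconcile the two, whereas any correct argument must produce a single cut satisfying both conditions simultaneously.
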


\begin{rmk}
As is clear, the proof holds for more general functions $f$.  Recall that $\ref{c:15}$ 
describes an internal relation comparing certain definable sets by means of partial functions available in 
specific trees. 
\end{rmk}

\begin{proof}[Proof of Claim \ref{o:seq}.]
Note that $(\kappa, \lambda) \in \mc(\cs, \xt_\cs)$ implies $\kappa + \lambda < \xt_\cs$ by definition. 
Let $\mct_4$ be the definable subtree of $\mct_\ma$ consisting of elements $x$ such that: 
\begin{itemize}
\item[(i)] $n_1 <_\ma n_2 <_\ma \lgn(x)$ implies $f(x(n_1)) <_\ma x(n_2)$
\item[(ii)] $n_1 <_\ma n_2 <_\ma \lgn(x)$ implies $|x(n_1)| <^\cs |x(n_2)|$ in the sense of \ref{c:15}. 
\end{itemize}
Note that this tree is nonempty and contains arbitrarily long finite branches.
For $a \in X_\ma$, say that \emph{cardinality $f$-grows above $a$} to mean that for each $n<\omega$ there are 
$x_0, \dots, x_n \in X_\ma$ such that $x_0 = a$, $f(x_\ell) <_\ma x_{\ell+1}$ for $\ell < n$, and 
\[ |a| <^\cs |x_1| <^\cs \cdots <^\cs |x_n|.\] 
Clearly this holds for all finite successors of $0_\ma$. 
By induction on $\epsilon < \kappa$ we choose $c_\epsilon \in \mct_4$, $n_\epsilon = \lgn(c_\epsilon) - 1 \in X_\ma$, 
and $d_\epsilon \in X_\ma$ so that:
\begin{enumerate}
\item[(a)] $\beta < \epsilon$ implies $c_\beta \tlf c_\epsilon$
\item[(b)] $c_\epsilon(n_\epsilon) = d_\epsilon$
\item[(c)] $c_\epsilon$ is below the ceiling 
\item[(d)] for each $n \leq_\ma n_\epsilon$, cardinality $f$-grows above $c_\epsilon(n)$. 
\end{enumerate}

The proof is similar to Theorem \ref{m2} (Uniqueness), with an adjustment for (d), however we will give the argument for completeness. 

For $\epsilon = 0$: let $c_0 = \langle 0_\ma \rangle$ and let $n_0 = 0_\ma$. As $\ma$ is nontrivial (so contains all the 
finite successors of $0_\ma$) and $f$
is multiplication by $2$, (d) is satisfied. 

For $\epsilon = \beta + 1$: since $c_\beta$ is below the ceiling, concatenation is possible. By inductive hypothesis (d), 
we may choose $d_\epsilon \in X_\ma$ such that $f(c_\beta(n_\beta)) <_\ma d_\epsilon$, $|f(c_\beta(n_\beta))| <^\cs |d_\epsilon|$ and
cardinality $f$-grows above $d_\epsilon$. Let $c_\epsilon = {c_\beta}^\smallfrown \langle d_\epsilon \rangle$, 
and let $n_\epsilon = n_\beta + 1$, and the inductive hypotheses will be preserved. 

For $\epsilon < \kappa$ limit: As $\cf(\epsilon) < \min \{ \xp_\cs, \xt_\cs \}$, apply Lemma \ref{c:ceiling} 
to obtain $c \in \mct_4$ such that $\beta < \epsilon$ implies $M^+_1 \models c_\beta \tlf c$, and $c$ is below the ceiling.
Let $d_\infty = c(\lg(c)-1)$.  
The key point is to ensure (d). As $c \in \mct_4$, (ii) holds. For each $k<\omega$, let 
$m_k = \max \{ n <_\ma \lg(c) : c(n) \leq_\ma f^{-k} (d_\infty) \}$. Recalling (i) in the definition of $\mct_4$ 
and the fact that $\langle c_\beta : \beta < \epsilon \rangle$ is a strictly increasing
sequence below $c$, each $m_k$ is well defined and 
\[ ( \langle n_\beta : \beta < \epsilon \rangle, \langle m_k : k < \omega \rangle ) \]
is a pre-cut in $X_\ma$, and cannot be a cut without contradicting the definition of $\xp_\cs$.  
Let $n_{**} \in X_\ma$ realize this pre-cut. Let  $c_\epsilon = {c\rstr_{n_{**}+1}}$, $n_\epsilon = n_{**}$ and
$d_\epsilon = c_\epsilon(n_\epsilon)$. Clearly $c_\epsilon$ satisfies (d). 
This completes the inductive choice of the sequence.

\br

Once the sequence has been constructed, $\kappa < \xt_\cs$
implies that $\langle c_\epsilon : \epsilon < \kappa \rangle$ has an upper bound $c$ in $\mct_4$ by Claim \ref{t-def}.  
We first choose a cut in the domain of $c$, as follows.
Consider the sequence $\langle n_\epsilon := \lgn(c_\epsilon)-1 ~:~ \epsilon < \kappa \rangle$ in $X_\ma$. 
By Theorem \ref{m2} (Uniqueness) and the assumption that $(\kappa, \lambda) \in \cts$, 
there is a sequence $\langle m_\alpha : \alpha < \lambda \rangle$ of elements of $\{ n \in X_\ma : n <_\ma \lgn(c) \}$ 
such that $(\langle n_\epsilon : \epsilon < \kappa \rangle, \langle m_\alpha : \alpha < \lambda \rangle)$
represents a $(\kappa, \lambda)$-cut. 

Finally, let the sequence $\langle e_\alpha : \alpha < \lambda \rangle$ in $X_\ma$ given by
$e_\alpha = c(m_\alpha)$. Then $\langle d_\epsilon: \epsilon < \kappa \rangle$, 
$\langle e_\alpha : \alpha < \lambda \rangle$ satisfy our requirements, by the definition of $\mct_4$.
\end{proof}

\begin{fact} \label{about-g}
There exists a symmetric function $g: \kappa^+ \times \kappa^+ \rightarrow \kappa$ such that
\[ \uu \in [\kappa^+]^{\kappa^+} \mbox{ implies } \sup (\rn(g\rstr \uu \times \uu) ) = \kappa. \]
\end{fact}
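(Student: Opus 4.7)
The plan is to build $g$ from a coherent family of injections. For each $\alpha < \kappa^+$, fix some injection $f_\alpha : \alpha \to \kappa$; this is possible because $|\alpha| \leq \kappa$. Then define $g : \kappa^+ \times \kappa^+ \to \kappa$ by
\[ g(\alpha, \beta) = \begin{cases} f_{\max(\alpha, \beta)}(\min(\alpha, \beta)) & \text{if } \alpha \neq \beta, \\ 0 & \text{if } \alpha = \beta. \end{cases} \]
Symmetry is immediate from the definition, so the only thing to verify is the unboundedness clause.

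For this, suppose $\uu \in [\kappa^+]^{\kappa^+}$. Since $\otp(\uu) = \kappa^+ > \kappa$, all but an initial $\kappa$-sized segment of $\uu$ consists of elements $\beta$ with $\otp(\uu \cap \beta) \geq \kappa$; fix any such $\beta \in \uu$. Then $|\uu \cap \beta| \geq \kappa$, and since $f_\beta$ is injective on $\beta$, the image $f_\beta[\uu \cap \beta]$ is a subset of $\kappa$ of cardinality at least $\kappa$. Any such subset of $\kappa$ is automatically unbounded in $\kappa$: if it were contained in some $\alpha < \kappa$, then its cardinality would be at most $|\alpha| < \kappa$. By construction, $f_\beta[\uu \cap \beta] = \{ g(\alpha, \beta) : \alpha \in \uu \cap \beta\} \subseteq \rn(g \rstr \uu \times \uu)$, which gives $\sup\bigl(\rn(g \rstr \uu \times \uu)\bigr) = \kappa$.

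The construction is purely combinatorial and the argument does not require any regularity assumption on $\kappa$; the only point of care is to choose $\beta$ high enough in $\uu$ to guarantee $|\uu \cap \beta| \geq \kappa$, which is handled by the elementary order-type count above. So I do not anticipate a real obstacle; the main subtlety is merely the observation that a subset of an infinite cardinal $\kappa$ whose size equals $\kappa$ is necessarily cofinal in $\kappa$, which is what allows us to conclude unboundedness from a mere cardinality bound on the image.
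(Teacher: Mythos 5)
Your construction is correct, and it is the standard (Sierpi\'nski-type) argument: fix injections $f_\beta:\beta\to\kappa$ and colour the pair $\{\alpha,\beta\}$ (with $\alpha<\beta$) by $f_\beta(\alpha)$; the unboundedness argument via a $\beta\in\uu$ with $|\uu\cap\beta|=\kappa$, plus the observation that a size-$\kappa$ subset of the cardinal $\kappa$ is cofinal in $\kappa$, is exactly what is needed. The paper states Fact \ref{about-g} without proof, treating it as a known combinatorial fact, so there is nothing to compare against beyond noting that your proof is the canonical one and, as you say, needs no regularity assumption on $\kappa$ (though in the paper's application $\kappa$ is regular anyway).
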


For instance, a symmetric function $g$ has the desired property if whenever $\alpha < \alpha^\prime < \beta$, 
$g(\alpha, \beta) \neq g(\alpha^\prime, \beta)$.

We now prove the main result of this section:

\begin{theorem} \label{lc} \label{last-cut}
Let $\cs$ be a cofinality spectrum problem. Suppose that $\kappa$, $\lambda$ are regular and
$\kappa < \lambda = \xp_\cs < \xt_\cs$. Then $(\kappa, \lambda) \notin \mc(\cs, \xt_\cs)$.
\end{theorem}

\begin{proof}
We suppose for a contradiction that $\lcf(\kappa) = \lambda$. 
Note that necessarily $\kappa + \lambda$ is minimal among $(\kappa, \lambda) \in \mc(\cs, \xt_\cs)$ by
definition of $\xp_\cs$.  
Let $\ma \in \ord(\cs)$ be coverable as a pair by $\ma^\prime$ in the sense of \ref{cov-pair} (in this proof we will write 
$\ma \times \ma$ to refer to the $\bc$ of that definition).
Let
\[ (\langle d_\epsilon : \epsilon < \kappa \rangle, \langle e_\alpha : \alpha < \lambda \rangle) \]
be a representation of a $(\kappa, \lambda)$-cut in $X_\ma$ given by Claim \ref{o:seq}. 
To complete the preliminaries, fix $g: \kappa^+ \times \kappa^+ \rightarrow \kappa$ given by Fact \ref{about-g}.
This is an outside function, which will help in the proof.

\bbr
We now define an appropriate tree.  
Let $\mb \in \ord(\cs)$ be such that $X_\mb = X_\ma \times X_\ma \times X_\ma \times X_{\ma^\prime} \times X_{\ma^\prime} \times X_{\ma^\prime}$. 
We will work in the definable subtree $\mct_6$ of $\mct_\mb$ consisting of elements $x \in \mct_\mb$ such that:

\begin{enumerate}
\item[(a)] $n_1 <_\mb n_2 <_\mb \lgn(x)$ implies
\[  x(n_2, 0) \leq_\ma x(n_2, 1) <_\ma x(n_2,2) <_\ma x(n_1, 0) \] 
\item[(b)] $x(n,3)$ is a nonempty subset of $X_\ma$ and $|x(n,3)| \leq^\cs |X_\ma|/2$ in the sense of \ref{c:15}, 
i.e. its cardinality is $\leq^\cs$ that of its complement
\item[(c)] $x(n,4)$ is a symmetric $2$-place function with domain $x(n,3) \times x(n,3)$ and range $\subseteq X_\ma$, 
which we call a distance estimate function 
\item[(d)] $x(n,5)$ is a 1-to-1 function from $x(n,3)$ to the interval $(x(n,1), x(n,2))_{\ma}$ such that:
\[ a \neq b \in x(n,3) \mbox{ implies }  | x(n,4)(a,b)| \leq^\cs | x(n,5)(a) - x(n,5)(b) |.  \]
(Recall that for any $z \in X_\ma$, the internal cardinality $|z|$ is identified with $|[0, z]_\ma|$ following \ref{o:seq}(2) above.)
\item[(e)] if $n_1 <_\mb n_2 <_\mb \lgn(x)$ and $a,b$ in $X_\ma$ are such that
\[ (\forall m) \left( (n_1 \leq_\mb m \leq_\mb n_2) \rightarrow \{ a,b \} \subseteq x(m,3) \vspace{3mm}\right) \]
then $x(n_1, 4)(a,b) = x(n_2, 4)(a,b)$.
\end{enumerate}
This completes the description of $\mct_6$. 
(Informally, given $n \leq \maxdom(x)) \in \mct_6$, $x(n,0)$ is a marker moving left towards the cut and $x(n,5)$ is 
an injection from $x(n,3)$ into the interval $(x(n,1), x(n,2))_\ma$. The new point is a distance estimate function 
$x(n,4)$:  we ask $x(n,5)$ to respect 
the internal spacing given by this function, and we ask that as $n$ grows, the distance estimate remains unchanged 
on pairs of elements which remain continuously in the sequence of domains $x(n,3)$.) 

\bbr

In the next, core part of the proof, we will choose $c_\alpha \in \mct_6$, $n_\alpha = \maxdom(c_\alpha))$ by induction on $\alpha < \lambda$. 
When $\alpha$ is a successor we will also choose $y_\alpha$.  
Our inductive hypothesis is as follows. 

\begin{itemize}
\item For all $\alpha < \lambda$, we ensure that:

\begin{enumerate}
\item $\beta < \alpha$ implies $c_\beta \tlf c_\alpha$
\item $\beta < \alpha$ implies 
\[ e_{\alpha+1} \leq_\ma c_\alpha(n_\alpha, 0) <_\ma c_\alpha(n_\alpha, 1) <_\ma c_\alpha(n_\alpha, 2) <_\ma e_{\beta+1} \]
and if $\alpha = \beta +1$, then in addition $c(n_\alpha, 0) = e_{\alpha + 1}$.
\item For all $\gamma < \min\{ \alpha, \kappa^+\}$, 
\begin{enumerate}
\item $y_{\gamma + 1} \in c_\alpha(n_\alpha, 3)$
\item $(\forall m)[n_{\gamma + 1} \leq_\mb m \leq_\mb n_\alpha \rightarrow y_{\gamma + 1} \in c_\alpha(m,3)]$
\item for all $\zeta + 1 < \gamma + 1$ and for all $m$ such that $n_{\gamma + 1} \leq_\mb m \leq_\mb n_\alpha$,
\[ x(m,4)(y_{\zeta + 1}, y_{\gamma + 1}) = d_{g(\zeta + 1, \gamma + 1)}. \]
\end{enumerate}
\end{enumerate}

\item When $\alpha = \beta + 1 < \kappa^+$, then in addition we choose $y_\alpha = y_{\beta + 1}$ so that the following are true:

\begin{enumerate}[resume]
\item $y_{\beta + 1} \in X_\ma \setminus \{ y_{\gamma + 1} : \gamma < \beta \}$
\item $y_{\beta + 1} \in c_\alpha(n_\alpha, 3)$
\item $y_{\beta + 1} \notin c_\beta(n_\beta, 3)$
\item \label{rep} $|c_\alpha(n_\alpha, 3)| \leq^\cs |X_\ma|/2$ 
\item for all $\gamma + 1 < \beta +1$ and all $n$ such that $n_{\gamma + 1} \leq_\mb n \leq_\mb n_\alpha$,
\[  x(n,4)(y_{\gamma + 1}, y_{\beta + 1}) = d_{g(\gamma + 1, \beta + 1)}. \]
\end{enumerate}
\end{itemize}
Note that $d_{g(\beta, \gamma)}$ functions here as a constant; $g$, set at the beginning of the proof, is an outside function not mentioned by the definition of $\mct_6$.
Condition (\ref{rep}) is part of the definition of $\mct_6$, repeated for clarity. 

This completes the statement of the inductive hypothesis; let us now carry out the induction. 

\bbr
\noindent \emph{For the case $\alpha = 0$}: Trivial.

\br

\noindent \emph{For the case $\alpha = \beta + 1$, when in addition $\alpha < \kappa^+$}:
If $\alpha = \beta + 1 < \kappa^+$, then we first define $y_\alpha = y_{\beta + 1}$.
By inductive hypothesis, $M^+_1 \models c_\beta \in \mct_6$.
Thus by (b) in the definition of $\mct_6$, $X_\ma \setminus c_\beta(n_\beta, 3) \neq \emptyset$.
Choose $y_{\beta + 1} \in X_\ma \setminus c_\beta(n_\beta, 3)$.  
Note that we choose $c_\alpha(n_\alpha, 3)$ below and will ensure there that it remains small enough; in particular,
it is irrelevant whether $c_\beta(n_\beta, 3) \cup \{ y_{\beta + 1} \}$ has size no larger than its complement in $X_\ma$. 

Then conditions (4) and (6) of the inductive hypothesis hold, so by condition (e) of the definition of $\mct_6$ we will be allowed to freely choose the value of 
$c_\alpha(n_\alpha, 4)$ on any pair which includes $y_{\beta + 1}$. Label this (**) for later reference.

Having defined $y_{\beta + 1}$, continue as in the general successor step: 

\br

\noindent \emph{For the case $\alpha = \beta + 1$ for arbitrary $\alpha < \lambda$}:
We now assume that $y_{\beta + 1}$ has been chosen for all $\beta < \min \{ \alpha, \kappa^+ \}$
and continue the proof assuming only
$\alpha = \beta + 1 < \lambda$.

The key point at this step is to define $c_\alpha(n_\alpha, \ell)$ for $\ell < 6$. The nontrivial cases are
$\ell = 3,4,5$. We will do this by showing it may be expressed as a consistent partial $\ord$-type, and then applying Theorem \ref{x16}, local saturation. 
Recall that we are assuming $\xp_\cs < \xt_\cs$.

\br
Let $p(x_0, x_1, x_2, x_3, x_4, x_5)$ be the partial type stating the following. (Note that 
$x_0, x_1, x_2$ are determined by $x_3, x_4, x_5$. The $e_\alpha$ are from the background cut.) 

\begin{quote}
\begin{enumerate}
\item[(p.1)\hx] $x_0, x_1, x_2$ are elements of $X_\ma$ and $x_3, x_4, x_5$ are elements of $X_{\ma^\prime}$
\item[(p.2)\hx] $x_0 = e_{\alpha + 1} \leq_\ma x_1 <_\ma x_2 \leq_\ma e_\alpha$
\item[(p.3)\hx] $x_3 = \dom(x_4) \subseteq X_\ma$ and $|x_3| \leq^\cs |X_\ma|/2$
\item[(p.4)\hx] $x_4$ is a definable symmetric 2-place function from $x_3$ to $X_a$
\item[(p.5)\hx] $x_5$ is a definable injection from $x_3$ into the interval $(e_{\alpha + 1}, e_\alpha)_\ma$ such that
\[ a \neq b \in x_3 \rightarrow  | x_4(a,b) | \leq^\cs | x_5(a) - x_5(b) | \]
\item[(p.6)\hx] $x_1 = \min (\rn(x_5))$, $x_2 = \max (\rn(x_5))$
\item[(p.7)\hx] if $a,b \in c_\beta(n_\beta, 3) \cap x_3$ then $x_4(a,b) = c_\beta(n_\beta, 4)(a,b)$.
\end{enumerate}
\end{quote}

\br

{For $\gamma < \min \{ \alpha, \kappa^+ \}$ 
we add:}
\begin{quote}
\begin{enumerate}[resume]
\item[$($p.8$)_\gamma$\hz] ~\hspace{2mm} \begin{center} $y_{\gamma + 1} \in x_3$. \end{center}
\end{enumerate}
\end{quote}
\br

For $\zeta < \gamma < \min \{ \alpha, \kappa^+ \}$
we add:
\begin{quote}
\begin{enumerate}[resume]
\item[$($p.9$)_{\zeta, \gamma}$] ~\hspace{2mm}  \begin{center} $x_4(y_{\zeta+1}, y_{\gamma + 1}) = d_{g(\zeta + 1, \gamma + 1)}$. \end{center}
\end{enumerate}
\br
(Note: (p.9) is legitimate by the observation (**) from the case ``{$\alpha = \beta + 1, \alpha < \kappa^+$}'' when the pair includes 
$y_{\beta + 1}$, and by (1) of the inductive hypothesis for all other pairs of $y$s.) 
\end{quote}
\br

\noindent To show that $p$ is an $\ord$-type in the sense of \ref{x15}-\ref{x16}, note that first,
$p$ depends on the parameters $\{ e_\alpha, e_{\alpha + 1}, c_\beta \} \cup \{ y_{\gamma + 1} : \gamma < \min \{ \alpha, \kappa^+ \} \}$. 
Recall from the statement of the Lemma that $\lambda = \xp_\cs$, thus $|\alpha| < \xp_\cs$. It remains to show that
$p$ is finitely satisfiable in $X_\mb$ (i.e. consistent). 
Since we choose a domain $x_3$, a distance estimate function $x_4$ and a partial injection $x_5$ simultaneously, the true constraints come 
from the schemata $(p.8)$ and $(p.9)$ which require that certain elements are in the domain, thus certain previously set distance estimates must be respected and,
if applicable, certain new distances set.

\bbr
Let us check finite satisfiability (by compactness this will suffice). 
Let a nonempty finite subset $\Gamma \subseteq \alpha \cap \kappa^+$ be given. Let $p_0 \subseteq p$ be finite and such that
$p_0$ implies (p.1)--(p.7), $p_0$ implies $($p.8$)_\gamma$ for each $\gamma \in \Gamma$ and
$p_0$ implies $($p.9$)_{\zeta, \gamma}$ for each $\zeta, \gamma \in \Gamma$, $\zeta < \gamma$. 
We will prove that $p_0$ is satisfiable. 

We define $b_3, b_4, b_5, b_1, b_2$ as follows.

\br
\begin{enumerate}
\item[(i)] Let $b_3 = \{ y_{\gamma + 1} : \gamma \in \Gamma \}$. 
\item[(ii)] Let $b_4$ be the symmetric 2-place function on $b_3$ defined by:
\[  (y_{\zeta+1}, y_{\gamma + 1}) \mapsto d_{g(\zeta+1, \gamma+1)} \]
for $\zeta, \gamma \in \Gamma$. (Note that this function exists and may be identified with an element of $\mct_{\ma \times \ma}$, 
thus of $X_{\ma^\prime}$, by the choice of $\ma, \ma^\prime$ satisfying \ref{cov-pair}, and the fact that finite sequences in this tree are closed
under concatenating an additional element, since $\ma^\prime$ is nontrivial.)
\item[(iii)] Let $d = \max \{ d_{g(\zeta + 1, \gamma + 1)} : \zeta \neq \gamma \in \Gamma \}$. Note that for all $\alpha < \lambda$,
$d < e_\alpha$, by choice of the cuts at the beginning of the proof.
\item[(iv)] Let $\{ \gamma_0, \dots \gamma_n \}$ enumerate $\Gamma$ in increasing order, without repetition.
Let $b_5$ be the definable partial injection given by
\[ y_{\gamma_i +1} \mapsto e_{\alpha+1} +_\ma 1 +_\ma i\cdot d \]
for $i = 0, \dots n$.  By the choice of $\overline{e}$, $\max(\rn(b_5)) <_\ma e_\alpha$. (The parenthetical remark from (ii) applies here also.) 
\item[(v)] Let $b_0, b_1, b_2$ be defined from $b_3, b_4, b_5$ by conditions (p.1), (p.2), (p.6) of the definition of $p$.
\end{enumerate}

\bbr
\noindent Let us verify that $(b_0, \dots b_5) \models p_0$. Condition (p.1) is obvious. For Conditions (p.2) and (p.6), note that by (iv)
$\max (\rn(b_5)) <_\ma e_\alpha$.  Condition (p.3) is obvious as $b_3$ is finite and $X_\ma$ is not.
Conditions (p.4), (p.5), and (p.6) are immediate from the definitions of the elements $b_i$. For $\zeta < \gamma \leq \beta$,
Conditions (p.8)$_\gamma$ and (p.9)$_{\zeta, \gamma}$ are also immediate.
What about Condition (p.7)? 
(Note that on our proposed finite fragment, $c_\beta(n_\beta, 3) \cap b_3 \subseteq b_3$.)
By inductive hypothesis [(1), (5)],
we have that
\[ c_\beta(n_\beta, 3) \cap b_3 = \{ y_{\gamma + 1} : \gamma \in \Gamma, \gamma \neq \beta \}. \]
Thus, when $\zeta < \gamma < \beta$, Condition (p.7) for $a=y_{\zeta+1}, b=y_{\gamma+1}$ is ensured by Condition (p.9)$_{\zeta, \gamma}$, and when $\zeta < \beta$,
Condition (p.7) $a=y_{\zeta+1}, b=y_{\beta+1}$ is trivially true since $y_{\beta + 1} \notin c_\beta(n_\beta, x_3)$.

This completes the proof that $p_0$ is realized, \emph{thus} that $p$ is an $\ord$-type in the sense of Definition \ref{x15} over a set of size $<\xp_\cs$.

By Theorem \ref{x16}, $p$ has a realization $\langle b^*_i : i < 6 \rangle$. By inductive hypothesis, we may concatenate.
Let $n_\alpha = n_\beta +1$ and let
$c_\alpha = {c_\beta}^\smallfrown\langle b^*_0, \dots b^*_5\rangle$. This completes the successor step.

\bbr
\noindent\emph{For the case of $\alpha$ limit}: Since $\cf(\alpha) < \lambda \leq \xp_\cs \leq \xt_\cs$, by Lemma \ref{c:ceiling} 
there is $c \in \mct_6$ such that $\beta < \alpha$ implies $c_\beta \tlf c$ and $c$ is below the ceiling. 
The key point is to adjust this by choosing a suitable initial segment $c_\alpha$ of $c$ so that 
Conditions (1), (2) and (3)(a) of the inductive hypothesis will be satisfied. (Conditions (3)(b)-(c) will then follow by definition of $\mct_6$.) 
First, let
\[ n_* = \max \{ n : n <_\mb \lgn(c), M^+_1 \models ( e_\alpha <_\ma c(n, 0) ) \}. \]
Necessarily for all $\beta < \alpha$, $\lgn(c_\beta) <_\mb n_*$.
Second, for each $\beta < \max \{ \alpha, \kappa^+ \}$, let 
\[ n(\beta) = \max \{ n \leq_\mb n_* :  y_{\beta + 1} \in c(n, 3) \}. \]
For each $\gamma < \beta < \alpha$, inductive hypothesis (3) for $\beta$ implies
$y_{\gamma+1} \in c_\beta(n_\beta, 3)$. In other words,
\[ (\{ n_\beta : \beta < \alpha \cap \kappa^+ \}, \{ n(\beta) : \beta < \alpha \cap \kappa^+ \} ) \]
is a pre-cut in $X_\mb$, thus a $(\kappa_1, \kappa_2)$-pre-cut for some regular $\kappa_1, \kappa_2 \leq |\alpha \cap \kappa^+|$.
It cannot be a cut, as then $\kappa_1 + \kappa_2 \leq |\alpha| < \lambda = \xp_\cs$ contradicting the definition of $\xp_\cs$.
So we may choose $n_{**} \leq_\mb n_*$ such that for all $\gamma, \beta < \alpha \cap \kappa^+$,
\[ n_\gamma <_\mb  n_{**} <_\mb n_\beta. \]
Let $n_\alpha = n_{**}$, and let $c_\alpha = {c\rstr_{n_\alpha}}$. 
This completes the limit case, and thus the inductive construction of the sequence $\langle c_\alpha, n_\alpha : \alpha < \lambda \rangle$.

\bbr

We arrive at the final stage of the proof. Having built our sequence $\langle c_\alpha : \alpha < \kappa \rangle$, as $\lambda = \xp_\cs < \xt_\cs$
we may choose $c_\lambda \in \mct_6$ such that for all $\alpha < \lambda$,
$c_\alpha \tlf c_\lambda$. Let $n_\lambda = \maxdom(c_\lambda))$.
By construction, $\langle n_\alpha  : \alpha < \lambda \rangle$ is an increasing sequence of elements of $X_\mb$ 
below $n_\lambda$. By Theorem \ref{m2} (Uniqueness), there is a sequence $\langle m_\epsilon : \epsilon < \kappa \rangle$
such that
\[ (\langle n_\alpha : \alpha < \lambda \rangle, \langle m_\epsilon : \epsilon < \kappa \rangle) \]
represents a $(\lambda, \kappa)$-cut in $X_\mb$.
Without loss of generality, for some increasing $\zeta : \kappa \rightarrow \kappa$,
\[ d_{\zeta(\epsilon)} <_\ma c_\lambda(m_\epsilon, 0) <_\ma d_{\zeta(\epsilon+1)}. \]
Now for each $\beta < \kappa^+$ and each $\alpha_1, \alpha_2$ with $\beta < \alpha_1 < \alpha_2 < \lambda$, the set
\[ X_\beta := \{ n : n \leq_\mb n_\lambda, (\forall n^\prime)(n_{\beta + 1} \leq_\mb n^\prime \leq_\mb n \rightarrow y_{\beta + 1} \in c_\lambda(n^\prime,3) ) \} \]
is a subset of $X_\mb$ which includes the interval $[n_{\alpha_1}, n_{\alpha_2}]_\mb$, 
recalling the notation $n_\gamma = \maxdom(c_\gamma))$. Thus for some $\epsilon(\beta) < \kappa$,
\[ [n_{\beta+1}, m_{\epsilon(\beta)}]_\mb \subseteq X_\beta. \]
As $\beta < \kappa^+$ was arbitrary, there is $\epsilon_* < \kappa$ such that
\[ \uu = \{ \beta < \kappa^+ : \epsilon(\beta) = \epsilon_* \} \]
has cardinality $\kappa^+$. For simplicity of notation, let $F:= c_\lambda(m_{\epsilon_*}, 4)$.
By Condition (d) in the definition of $\mct_6$ and the inductive hypothesis, for every $\beta \neq \gamma \in \uu$ we have that
$F(y_\gamma, y_\beta) = d_{g(\gamma, \beta)}$.
However, by the choice of $\overline{d}$, the choice of the outside function $g$ and the fact that $|\uu| = \kappa^+$, there exist $\gamma, \beta \in \uu$ such that
$M^+_1 \models$ $|d_{\zeta(\epsilon_*) + 1}| <^\cs |d_{g(\gamma, \beta)}|$.
This contradiction completes the proof.
\end{proof}

\bbr

\section{$\mc(\cs, \xt_\cs)=\emptyset$}  
\label{s:main-thm}

With the results of the previous sections in hand, 
we now prove the paper's fundamental result:

\begin{theorem-c1} \label{no-cuts}
Let $\cs$ be a cofinality spectrum problem. Then $\mc(\cs, \xt_\cs) = \emptyset$.
\end{theorem-c1}

\begin{proof} 
There are two cases.

\emph{Case 1}. $\xp_\cs < \xt_\cs$. Let $\kappa, \lambda$ be such that $\kappa + \lambda = \xp_\cs$ and
$(\kappa, \lambda) \in \mc(\cs, \xt_\cs)$. By Claim \ref{m2}, Conclusion \ref{cor:sym} and Lemma \ref{m12}, $\kappa \neq \lambda$
and we may assume $\kappa < \lambda = \xp_\cs$. Then the hypotheses of Theorem \ref{last-cut} are satisfied,
so $(\kappa, \lambda) \notin \mc(\cs, \xt_\cs)$, contradiction. This shows Case 1 cannot occur. 

\emph{Case 2}. $\xt_\cs \leq \xp_\cs$, so by definition of $\xp_\cs$, $\mc(\cs, \xt_\cs) = \emptyset$.
This proves the theorem. \end{proof}

\br

\section{A new characterization of good regular ultrafilters} \label{s:defns}

We now apply cofinality spectrum problems to study regular ultrapowers and Keisler's order. 
We focus here on the definitions and history most relevant to our present proofs; further details can be found in \cite{MiSh:996}. 
In the present section, we give the necessary background and then prove two theorems: Theorem \ref{maximal-uf}, the analogue of Theorem \ref{no-cuts} for regular 
ultrapowers, and Main Theorem \ref{maximal-x}, a new characterization of Keisler's notion of goodness.

\begin{conv}[Conventions on ultrapowers] \emph{ }
\begin{enumerate}
\item For transparency we assume all languages are countable. 
\item For transparency we assume all theories $T$ are complete. 
\item We use $\de$ to denote a regular ultrafilter on the index set $I$, see $\ref{regular-filter}$, and $\lambda$ to denote $|I|$. 
\item Small means of cardinality $\leq |I|$. 
\item We use notation of the form $a[t]$ following Convention \ref{c:first}(6). 
\end{enumerate}
\end{conv}

\begin{defn} \label{regular-filter} 
Say that the filter $\de$ on $|I|$ is $\kappa$-regular when there is a collection $\overline{X} = \{ X_i : i < \kappa \} \subseteq \de$ such that for each $t \in I$,
\[ | \{ i < \kappa : t \in X_i \} | < \aleph_0 \]
Such a collection is called  a \emph{$\kappa$-regularizing family}. 
Call $\de$ regular when it is $|I|$-regular. 
\end{defn}

Regularity is a kind of strong incompleteness: a $\kappa$-regularizing family is such that 
for any $\sigma \subseteq \kappa$, if $|\sigma| \geq \aleph_0$ then  
$\bigcap \{ X_i : i \in \sigma \} = \emptyset$.

\begin{defn} When $\de$ is an ultrafilter on $I$, $M$ a model,  
say that \emph{$\de$ saturates $M$} to mean $M^I/\de$ is $|I|^+$-saturated.
\end{defn}

\begin{fact}[Keisler \cite{keisler}] \label{r:reg}
If $\de$ is a regular ultrafilter on $I$ then whenever $M \equiv N$, $\de$ saturates $M$ iff $\de$ saturates $N$.
\end{fact}

In particular, when $\de$ is regular, the following is well defined (recalling that all languages are countable). 

\begin{defn}
When $\de$ is regular, say that \emph{$\de$ saturates $T$} to mean $\de$ saturates $M$ for some, equivalently every, $M \models T$. 
\end{defn}

Keisler in 1967 proposed that regular ultrafilters could be used to investigate the relative complexity of first-order theories:

\begin{defn} \label{keisler-order} \emph{(Keisler \cite{keisler})}
Let $T_1, T_2$ be complete, countable first order theories. Say that $T_1 \kleq T_2$ if for any infinite cardinal $\lambda$ and 
any regular ultrafilter $\de$ on $\lambda$, if $\de$ saturates $T_2$ then $\de$ saturates $T_1$. 
\end{defn}

In other words, $T_1 \kleq T_2$ if for all infinite $\lambda$, 
all $M_1 \models T_1, M_2 \models
T_2$, and any $\de$ a regular ultrafilter on $\lambda$,
if $M^{\lambda}_2/\de$ is $\lambda^+$-saturated then $M^{\lambda}_1/\de$ must be
$\lambda^+$-saturated.

Determining the structure of Keisler's order is a far-reaching problem.  It was studied by Keisler, then by Shelah in Chapter VI of \cite{Sh:a}, 
then in a series of papers by Malliaris \cite{mm-thesis}, \cite{mm1}, \cite{mm4}, \cite{mm5}, and very recently by both authors jointly 
in \cite{MiSh:996}, \cite{MiSh:997}, \cite{MiSh:999}, \cite{MiSh:1009}.  
We refer the interested reader to the introduction of \cite{MiSh:996}
for motivation and \cite{MiSh:1009} for a survey of what is known. 
Here, we focus on the part of the classification problem relevant to our current work (on the maximal class, and on non-simple theories).
Towards this let us give several theorems and definitions. 
The two fundamental theorems of ultraproducts we use are \lost theorem and its corollary:

\begin{thm-lit} \emph{(Ultrapowers commute with reducts)} \label{commute-with-reducts}
Let $\tau, \tau^\prime$ denote vocabularies. 
Let $M$ be an $\tau^\prime$-structure, $\tau \subseteq \tau^\prime$, $\de$ an ultrafilter on $\lambda \geq \aleph_0$, $N = M^\lambda/\de$.
Then
\[ \left( M^\lambda/\de      \right)|_{\tau}  = \left(  M|_{\tau} \right)^\lambda/\de      \]
\end{thm-lit}

In light of Theorem \ref{commute-with-reducts}, it is useful to consider structure on the ultrapower coming from the index models 
in some possibly expanded language.  

\begin{defn} \emph{(Internal, i.e. induced)}\label{d:internal}
Let $N = M^\lambda/\de$ be an ultrapower. Say that a relation or function $X$ on $N$ is \emph{internal}, also called \emph{induced},
if we may expand the language
by adding a new symbol $Y$ of the same arity as $X$, and choose for each $t \in \lambda$ an interpretation $Y^{M_t}$ of $Y$,
so that $Y^{N} \equiv X \mod \de$, where $Y^N = \{ a \in {^\lambda M } : \{ t < \lambda : a[t] \in Y^{M_t} \} \in \de \}$.
\end{defn}

Good ultrafilters are a useful family of ultrafilters introduced by Keisler. 
For any infinite $\lambda$, $\lambda^+$-good ultrafilters on $\lambda$ exist
by a theorem of Kunen \cite{kunen}, extending a theorem of Keisler which assumed GCH. 

\begin{defn} \label{good-filters} \emph{(Good filters, Keisler \cite{keisler-1})}  
Say that the filter $\de$ on $I$, $|I| = \lambda$ is \emph{$\kappa^+$-good} if every $f: \fss(\kappa) \rightarrow \de$ has
a multiplicative refinement, i.e. there is $f^\prime: \fss(\kappa) \rightarrow \de$ such that:
\begin{enumerate}
\item $u \in [\kappa]^{<\aleph_0}$ implies $f^\prime(u) \subseteq f(u)$
\item $u, v \in [\kappa]^{<\aleph_0}$ implies $f^\prime(u) \cap f^\prime(v) = f^\prime(u \cup v)$
\end{enumerate}
In this paper we assume all good ultrafilters are $\aleph_1$-incomplete, thus regular. We say that a filter is \emph{good} if it
is $|I|^+$-good.
\end{defn}

Keisler proved that Keisler's order has a maximum class, and that this class had a set-theoretic characterization: 

\begin{thm-lit} \label{good-max} \emph{(Keisler \cite{keisler})}
There is a maximum class in Keisler's order, which consists precisely of those theories $T$ such that
for $M \models T$ and $\de$ a regular ultrafilter on $\lambda$, $M^\lambda/\de$ is $\lambda^+$-saturated iff
$\de$ is $\lambda^+$-good. 
\end{thm-lit}

\begin{concl} \label{must-be-good} 
Let $X$ be any property of regular ultrafilters. Suppose it can be shown that for some countable first-order theory $T$ and model $M \models T$,
the condition ``$\de$ has property $X$'' is necessary for $M^I/\de$ to be ${|I|}^+$-saturated. Then a consequence of Theorem \ref{good-max} is that
any (regular) good ultrafilter $\de$ on $I$ must have property $X$.
\end{concl}

The question of the model-theoretic identity of this maximum class has remained elusive. The importance of this question comes,
in part, from its being an outside definition of a class of unstable theories.

The connection in Theorem \ref{good-max} between realizing types and multiplicative refinements is based on the following definition and 
Fact \ref{mrft}. 

\begin{defn} \label{dist}
Let $N = M^I/\de$ be a regular ultrapower and $p = \{ \psi(x;a_i) : i < \mu \}$ a $\psi$-type in $N$ of cardinality $\mu \leq |I|$.
\begin{enumerate}
\item The \emph{\Los map}
$d_0 : \fss(\mu) \rightarrow \de$ is given by
\[ u \in \fss(\mu) \mapsto \{ s \in I : M \models \exists x \bigwedge \{ \psi(x;a_j) : j \in u \} \} \]
\item A \emph{distribution} for $p$ is a map $d: \fss(p) \rightarrow \de$ such that:
\begin{enumerate}
\item $d$ refines the \los map
\item the image of $d$ is a $\mu$-regularizing set for $\de$
\item without loss of generality $d$ is monotonic, i.e. $u \subseteq v$ implies $d(u) \supseteq d(v)$
\end{enumerate}
\end{enumerate}
\end{defn}

\begin{fact}[see e.g. \cite{MiSh:996} Section 1.2] \label{mrft}
For a small type $p$ in an ultrapower $N = M^I/\de$, the following are equivalent: $(a)$ $p$ is realized, $(b)$ some distribution of $p$ has a multiplicative refinement. 
\end{fact}

If Keisler's order gives a measure of complexity of theories, a first surprise was the nature of the complexity which maximality suggests.
\begin{thm-lit} \emph{(Sufficient conditions for maximality, Shelah)} \label{sop-maxl}
\begin{enumerate}
\item Any theory with the strict order property, e.g. $(\mathbb{Q}, <)$, is maximum in Keisler's order \cite{Sh:a}.VI.2.
\end{enumerate}
\emph{(and considerably later)}
\begin{enumerate}[resume]
\item $SOP_3$, a weakening of the strict order property, is sufficient for maximality \cite{Sh:500}. 
\end{enumerate}
\end{thm-lit}

We will not work directly with $SOP_3$, a weaker order, in this paper; a definition can be found in Shelah and Usvyatsov \cite{ShUs} Fact 1.3.

Since 1996, it has been open whether the boundary of the Keisler-maximal class lies at $SOP_3$ or whether it could be pushed up 
to $SOP_2$, Definition \ref{sop2-tree} below (or beyond). 
A major technical obstacle has been the lack of a framework within which to compare
orders and trees. 
Note that there are also nontrivial model-theoretic questions about whether $SOP_2$ may imply $SOP_3$ on the level of theories, but 
our analysis is able to avoid this. 

\begin{disc} \label{sop2:disc} \emph{($SOP_2$ and $SOP_3$)}
It is known that $SOP_2$ is weaker than $SOP_3$ on the level of formulas, but what about on the level of theories? 
This remains open and interesting. Our analysis ultimately circumvents this problem by showing that already $SOP_2$ suffices for
maximality from the point of view of Keisler's order.
\end{disc}

Before addressing the question of the maximality of $SOP_2$ in Keisler's order, we formally connect regular ultrapowers to CSPs.
Namely, we prove that the results on cofinality spectrum problems developed above may be applied to the study of cuts in regular ultrapowers of linear order.
First, we specialize the definitions of treetops \ref{treetops}, lower cofinality \ref{lcf-cs}, and $\mc(\cs, \xt_\cs)$
\ref{cst:card}) to the context of regular ultrapowers. (Generally, what bounds results in this section is not the size of the index set but that of the regularizing family.)
As $\de$ is regular, the saturation of $M$ will not matter.

\begin{defn} \emph{(Treetops)} \label{d:treetops}
Let $\de$ be an ultrafilter on $I$ and $\kappa$ a regular cardinal. 
\begin{enumerate}
\item We say that $\de$ has \emph{$\kappa$-treetops} when: 
for any $\kappa$-saturated model $M$ which interprets a tree $(\mct_M, \tlf)$, $N = M^I/\de$, $\gamma = \cf(\gamma) < \kappa$ and any
$\tlf$-increasing sequence $\langle a_i : i < \gamma \rangle$ in $(\mct_N, \tlf_N)$ 
there is $a \in \mct_N$ such that $i < \kappa$ implies $a_i \tlf a$. 
\item We say that $\de$ has $<\kappa$-treetops if $\de$ has $\theta$-treetops whenever $\theta =\cf(\theta) < \kappa$.
\item We say that $\de$ has $\leq \kappa$-treetops if $\de$ has $(< \kappa^+ )$-treetops.
\end{enumerate}
Our main case is $\lambda^+$-treetops where $\de$ is a regular ultrafilter on $I$, $|I| = \lambda$. 
\end{defn}

\begin{defn} \label{c-of-d}
For $\de$ an ultrafilter on $I$ we define:
\[ \mc(\de) = \left\{ (\kappa_1, \kappa_2) \in (\operatorname{Reg} \cap {|I|}^+) \times (\operatorname{Reg} \cap {|I|}^+) :
~\mbox{$(\omega, <)^I/\de$ has a $(\kappa_1, \kappa_2)$-cut} \right\} \]
\end{defn}

The paper's central question from the introduction thus becomes:

\begin{qst}
Let $\de$ be a regular ultrafilter on $I$ with ${|I|}^+$-treetops. What are the possible values of $\mc(\de)$?
\end{qst}

\begin{cor} \emph{(of the maximality of strict order, Theorem \ref{sop-maxl} above)}
\label{empty-good}
Let $\de$ be a regular ultrafilter on $I$, $|I| = \lambda$. Then $\mc(\de) = \emptyset$ iff $\de$ is $\lambda^+$-good.
\end{cor}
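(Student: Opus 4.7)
The plan is to observe that this corollary is essentially Fact \ref{max-good} from the introduction restated in the notation of this section, and to unpack the two directions by relating cuts in ultrapowers of linear order to realizability of $1$-types, then invoking the maximality of the strict order property (Theorem \ref{sop-maxl}(1)) together with Keisler's characterization of the maximum class (Theorem \ref{good-max}).

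For the direction $\de$ is $\lambda^+$-good $\implies \mc(\de) = \emptyset$: Since $\de$ is good, Theorem \ref{good-max} gives that $M^I/\de$ is $\lambda^+$-saturated for every $M$ whose theory lies in the maximum class. By Theorem \ref{sop-maxl}(1), $Th(\omega,<)$ is in the maximum class, so $(\omega,<)^I/\de$ is $\lambda^+$-saturated. A $(\kappa_1,\kappa_2)$-cut with $\kappa_1,\kappa_2 \leq \lambda$ regular would yield a $1$-type of cardinality $\kappa_1+\kappa_2 \leq \lambda$ omitted in $(\omega,<)^I/\de$, contradicting saturation; hence $\mc(\de) = \emptyset$.

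For the direction $\mc(\de) = \emptyset \implies \de$ is $\lambda^+$-good: by Theorem \ref{good-max} it suffices to exhibit a theory in Keisler's maximum class that is $\lambda^+$-saturated by $\de$. Take $M = (\mathbb{Q},<)$, whose theory is maximum by Theorem \ref{sop-maxl}(1). Any unrealized $1$-type over a set $A \subseteq N := M^I/\de$ with $|A| \leq \lambda$ determines a pre-cut in the convex closure of $A$ in $N$; refining $A$ to cofinal/coinitial subsequences gives a $(\kappa_1,\kappa_2)$-cut in $N$ with $\kappa_1,\kappa_2$ regular and $\kappa_1 + \kappa_2 \leq \lambda$. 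By regularity of $\de$ (this is the content of the reference to \ref{up-csp2} in the sketch of Fact \ref{max-good}), such a cut in the dense order $(\mathbb{Q},<)^I/\de$ transfers to a $(\kappa_1,\kappa_2)$-cut in $(\omega,<)^I/\de$: each element of a representing sequence in $\mathbb{Q}^I/\de$ can be replaced by its floor computed coordinatewise, and the resulting $\omega$-valued sequences remain strictly monotonic and cofinal/coinitial in the pre-cut after passing to a $\de$-large subset, using regularity to discard finitely many indices per level. This produces an element of $\mc(\de)$, contradicting the hypothesis; so $(\mathbb{Q},<)^I/\de$ is $\lambda^+$-saturated, and $\de$ is $\lambda^+$-good.

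The only nontrivial step is the regularity transfer from cuts in the dense order to cuts in $(\omega,<)^I/\de$; everything else is just unwinding definitions and quoting Theorems \ref{good-max} and \ref{sop-maxl}(1). Since the identical statement appears as Fact \ref{max-good} with a proof sketch already in the introduction, an acceptable shortcut is simply to cite Fact \ref{max-good} and remark that the passage from $|I| = \lambda$ to $\lambda$ as index set is notational.
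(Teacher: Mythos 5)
Your overall route is exactly the paper's: Corollary \ref{empty-good} is Fact \ref{max-good} restated (the paper itself uses the two interchangeably in the proof of Theorem \ref{maximal-uf}), and both directions reduce to Keisler's characterization (Theorem \ref{good-max}) plus maximality of theories with strict order (Theorem \ref{sop-maxl}(1)), together with the correspondence between omitted cut-types and cuts and a regularity transfer between ultrapowers of $(\mathbb{Q},<)$ and $(\omega,<)$. The direction ``good $\Rightarrow$ $\mc(\de)=\emptyset$'' and the reduction of goodness to $\lambda^+$-saturation of $(\mathbb{Q},<)^I/\de$ are fine, modulo one small omission: an omitted type over $A$ may correspond to a pre-cut with one side empty (a type cofinal or coinitial in the ultrapower), and this case must be ruled out separately, e.g.\ by the standard observation that for regular $\de$ the cofinality and coinitiality of an ultrapower of an endless order exceed $\lambda$ (take a regularizing family and a coordinatewise finite max/min).

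The genuine problem is in the step you yourself single out as the only nontrivial one. Replacing each element of a representing sequence in $(\mathbb{Q},<)^I/\de$ by its coordinatewise floor does not transfer the cut: a $(\kappa_1,\kappa_2)$-cut can lie entirely inside a single ``unit interval'' of the ultrapower (say inside $(0,1)$), in which case all floors coincide and the resulting sequences are constant rather than strictly monotonic; no passage to $\de$-large sets repairs this, since the collapse happens at every index (and floors land in $\mathbb{Z}$, not $\omega$, though that is cosmetic). The correct transfer, which is what the paper's pointer to \ref{up-csp2} (and Observation \ref{approx-omega}) actually contains, is different in kind: fix a distribution so that at each index $t$ only finitely many of the $\leq\lambda$ parameters are active, and choose at each index an order-preserving injection of that finite linearly ordered set onto an interval of a finite linear order; the induced internal map is order-preserving with interval range, so by \lost theorem it carries the given cut to a cut in $\prod_t(n_t,<_{n_t})/\de$, hence in $(\omega,<)^I/\de$. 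With that substitution --- or, as you note, by simply quoting Fact \ref{max-good} together with Claim \ref{up-csp2}, which is all the paper itself does here --- your argument is complete; as written, the explicit mechanism you give for the transfer would fail.
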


\begin{claim} \label{approx-omega}
Let $\de$ be a regular ultrafilter on $I$, $|I| = \lambda$. For any $n<\omega$, write $<_n$ for the order on $\omega$
restricted to $n$, i.e. to $\{ 0, \dots, n-1 \}$. Then there exists a sequence   
$\overline{n} = \overline{n}(\de) = \langle n_t : t \in I \rangle \in {^I \omega}$ 
such that for all regular cardinals $\kappa_1, \kappa_2$ with $\kappa_1 + \kappa_2 \leq \lambda$, 
the following are equivalent:
\begin{enumerate}
\item $(\kappa_1, \kappa_2) \in \mc(\de)$, i.e. $(\omega, <)^I/\de$ has a $(\kappa_1, \kappa_2)$-cut.
\item $\prod_t (n_t, <_{n_t})/\de$ has a $(\kappa_1, \kappa_2)$-cut. 
\end{enumerate}
\end{claim}

\begin{proof}
Note that it suffices to show (1) $\rightarrow$ (2). 

Without loss of generality, consider $M = (\omega, <)^I/\de$ and $M_1 = M^I/\de$. 
Let $\{ X_i : i < \lambda \}$ be a regularizing family, \ref{regular-filter} above. For $t \in I$,
let $n_t = | \{ i < \lambda : t \in X_i \} | + 1$.  
We verify that $\langle n_t : t \in I \rangle$ works.  
Let $(\langle a_\alpha : \alpha < \kappa_1 \rangle, \langle b_\beta : \beta < \kappa_2 \rangle)$ 
represent a $(\kappa_1, \kappa_2)$-cut in $(\omega, <)^\lambda/\de$. As $\kappa_1 + \kappa_2 \leq \lambda$, there is a map  
$d: \kappa_1 \times \{ 0 \} \cup \kappa_2 \times \{ 1 \} \rightarrow \de$ such that for each $t \in I$, 
$| \{ x \in \dom(d) : t  \in d(x) \} | < n_t$. For each $t \in I$, let 
$X_t = \{ a_\alpha[t] : t \in d((\alpha, 0)) \} \cup \{ b_\beta[t] : t \in d((\beta, 1)) \}$, which is a 
(linearly ordered) subset of $(\omega, <)^M$ with 
fewer than $n_t$ elements. Let $<_{X_t}$ denote the restriction of the linear order on $\omega$ to $X_t$. 
Then we may choose at each index $t$ an order preserving injection 
$h_t: (X_t, <_{X_t}) \rightarrow (n_t, <_{n_t})$ whose image is an interval. 
Let $h$ be the internal function whose projection to $t$ is $h_t$. Then by \lost theorem and the requirement that the range be an interval, 
we have that $(\langle h(a_\alpha) : \alpha < \kappa_1 \rangle, \langle h(b_\beta) : \beta < \kappa_2 \rangle)$ represents a
$(\kappa_1, \kappa_2)$-cut in $\prod_t (n_t, <_{n_t})/\de$. This completes the proof. 
\end{proof}

\begin{defn} \label{s-c}
Let $\de$ be a regular ultrafilter on $I$, $M$ a model extending $(\omega, <)$. 
If $\langle n_t : t \in I \rangle \in {^I \omega}$ is a sequence satisfying the conclusion of $\ref{approx-omega}$ for $\de$ 
and $(X, <_X) \subseteq M^I/\de$ is given by
\[ (X, <_X) = \prod_t (n_t, <_{n_t}) /\de \]
then we say $(X, <_X)$ \emph{captures pseudofinite cuts.} $[$Clearly, this depends on the background data of $I, \de, M$.$]$
\end{defn}

The next Claims \ref{up-csp}, \ref{up-csp2}, and \ref{up-csp3} justify regarding regular ultrapowers 
extending the theory of linear order [see Claim \ref{up-csp2}] as cofinality spectrum problems, and 
show that the specialized definitions for $\mc(\de)$ and treetops accurately reflect the properties of this background CSP. 

\begin{claim} \emph{(Regular ultrapowers as CSPs)}  \label{up-csp} 
Let $\de$ be a regular ultrafilter on $I$, $|I| = \lambda$. 
Let $M$ expand $(\omega, <)$ and let $M_1 = M^I/\de$. 

Then there exist expansions $M^+, M^+_1$ of $M, M_1$ respectively such that $M^+_1 = (M^+)^I/\de$
and a set of formulas
$\Delta \supseteq \{ x < y < z\}$ of the language of $M$ such that
\begin{enumerate} 
\item $\cs = ( M, M_1, M^+, M^+_1, Th(M^+), \Delta )$ 
is a cofinality spectrum problem, and 
\item some nontrivial $\ma \in \ord(\cs)$ captures pseudofinite cuts in the sense of $\ref{s-c}$.
\end{enumerate}
\end{claim}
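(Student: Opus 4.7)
\textbf{Proof plan for Claim \ref{up-csp}.}
The plan is to build $M^+$ by expanding $M$ to a structure rich enough to interpret ``enough set theory for trees'' internally, and then to invoke Theorem \ref{commute-with-reducts} to transfer this expansion to the ultrapower. Concretely, fix $\chi$ so large that $M \in \mch(\chi)$, and let $M^+$ be the two-sorted expansion with one sort carrying $(\mch(\chi),\in)$ together with a predicate $P$ interpreted as (the universe of) $M$ and symbols naming all basic relations of $M$ restricted to $P$. Set $M^+_1 := (M^+)^I/\de$; by Theorem \ref{commute-with-reducts}, the $\tau(M)$-reduct of $M^+_1$ is $M_1$, and $M^+ \preceq M^+_1$. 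Inside the set-theoretic sort we have pairing, sequences, and initial-segment relations, which are all definable and thus interpreted in $M^+_1$ in the expected way on internal (i.e.\ $\de$-induced) objects.

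The next step is to select $\Delta$. Let $\Delta$ consist of the formula $\vp(x,y;z) \equiv (x \leq y < z)$ in the language of $M$, together with the formulas obtained by iterating definable Cartesian products (using pairing functions supplied by $M^+$). For each parameter $z \in M^+_1$ the set $X(\vp,z) = \{a : a < z\}$ carries the induced discrete linear order with first element $0$ and last element $z-1$, and every $M^+_1$-definable subset has a first and last element by internal induction (pseudofiniteness). Closure under finite Cartesian products in the sense of $\ref{d:estt}(4)$ follows from definability of pairing in the set-theoretic expansion, and condition $\ref{d:estt}(5)$ follows by taking $\mb$ to be the square under the coordinatewise maximum. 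For each $\ma \in \ord(\Delta, M^+_1)$, the tree $\mct_\ma$ of internal functions $f : [0, d_\ma]_\ma \to X_\ma$ of bounded length, together with the length function $\lgn_\ma$, the evaluation $\xr_\ma$, and the initial-segment partial order $\tlf_\ma$, is defined by a single formula of $M^+$ applied to elements of the set-theoretic sort — so clauses $\ref{d:estt}(6)(a)$--$(e)$ all hold. This establishes that $\cs = (M, M_1, M^+, M^+_1, Th(M^+), \Delta)$ is a cofinality spectrum problem.

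For part (2), apply Observation \ref{approx-omega} to obtain a sequence $\overline{n}(\de) = \langle n_t : t \in I \rangle \in {^I \omega}$ with the stated capturing property, and let $n^* = [\overline{n}]_\de \in M_1$. By regularity, $n^*$ is nonstandard, so the tuple $\ma$ with $\vp_\ma = \vp$, parameter $c_\ma = n^*$, and $d_\ma = S^{-1}(n^*)$ is a nontrivial element of $\ord(\cs)$ with $X_\ma = \prod_t n_t /\de$. The defining property of $\overline{n}(\de)$ says precisely that every $(\kappa_1,\kappa_2)$-cut of $(\omega,<)^I/\de$ (equivalently, of any pseudofinite end-segment of $M_1$) transfers to a $(\kappa_1,\kappa_2)$-cut of $X_\ma$, which is the content of Definition \ref{s-c}.

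The main obstacle is of a bookkeeping nature: ensuring that the particular formulas chosen for $\Delta$ and for the tree-encoding predicates $\psi_0,\dots,\psi_3$ of $\ref{d:estt}(6)$ are formulas of $M^+$ (rather than merely interpretable ones) whose interpretation in the ultrapower $M^+_1$ really coincides with the intended set of internal functions — this is exactly where Theorem \ref{commute-with-reducts} is essential, since it guarantees that the set-theoretic sort and its definable operations survive the passage to the ultrapower and that all conditions in $\ref{d:estt}$ are witnessed by elements genuinely present in $M^+_1$.
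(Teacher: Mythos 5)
Your proposal is correct and follows essentially the same route as the paper: expand $M$ to a structure coding enough set theory (e.g.\ via $(\mch(\chi),\in)$), use Theorem \ref{commute-with-reducts} to get $M^+ \preceq M^+_1 = (M^+)^I/\de$ with ESTT, and obtain part (2) from Observation \ref{approx-omega} by taking the order $\prod_t (n_t,<_{n_t})/\de$ with a nonstandard bound $d_\ma$ to ensure nontriviality. The paper's proof is just terser, leaving the verification of the clauses of Definition \ref{d:estt} implicit in the choice of expansion, whereas you spell them out.
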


\begin{proof} 
As ultrapowers commute with reducts, 
for (1) choose any expansion $M^+$ of $M$ which will code sufficient set theory 
for trees in the sense of \ref{d:estt}, e.g. the complete expansion, or an expansion to a model of $(\mch(\chi), \in)$ 
for some sufficiently large $\chi$. Let $M^+_1 = (M^+)^I/\de$.   
For (2), let $\langle n_t : t \in I \rangle$ be given by \ref{approx-omega}. By construction, the linear order 
$\prod_t (n_t, <_{n_t}) /\de$ is $\Delta$-definable in $M_1$ and captures pseudofinite cuts. 
It will correspond to some nontrivial $\ma \in \ord(\cs)$ provided we choose $d_\ma$ to not be a natural number. 
\end{proof}

\begin{defn}
Call any $\cs$ satisfying the conclusion of \ref{up-csp} a cofinality spectrum problem \emph{associated to $\de$}.
\end{defn}

In the next series of claims we verify that cuts and trees behave as expected.

\begin{claim} \label{up-csp2}
Let $\de, I, \lambda, M, M_1$ be as in \ref{up-csp} and let $\cs$ be a cofinality spectrum problem given by that Claim. 
For $\kappa_1 + \kappa_2 \leq \lambda$, $\kappa_1, \kappa_2$ regular, the following are equivalent:

\begin{enumerate}
\item There is a $(\kappa_1, \kappa_2)$-cut in some $M^+_1$-definable linearly ordered set. 
\item $(\kappa_1, \kappa_2) \in \mc(\cs, |I|^+)$.  
\item $(\kappa_1, \kappa_2) \in \mc(\de)$.
\end{enumerate}
\end{claim}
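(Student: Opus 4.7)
The plan is to prove the cyclic chain $(2) \Rightarrow (1) \Rightarrow (3) \Rightarrow (2)$.

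The implication $(2) \Rightarrow (1)$ is immediate: every $\ma \in \ord(\cs)$ produces a linear order $(X_\ma, \leq_\ma)$ defined by a $\Delta^\cs$-formula with parameters, hence $M^+_1$-definable. For $(3) \Rightarrow (2)$, I would invoke the construction of $\cs$ from Claim \ref{up-csp} together with Observation \ref{approx-omega}. By \ref{up-csp}(2) there is a nontrivial $\ma \in \ord(\cs)$ whose order $(X_\ma, <_\ma)$ is $\prod_t (n_t, <_{n_t})/\de$ for the sequence $\langle n_t : t \in I \rangle$ produced by \ref{approx-omega}; that observation gives an exact equivalence of the existence of a $(\kappa_1, \kappa_2)$-cut in $(\omega, <)^I/\de$ and in $(X_\ma, <_\ma)$. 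Since $\kappa_1 + \kappa_2 \leq \lambda < |I|^+$, any such cut witnesses $(\kappa_1, \kappa_2) \in \mc(\cs, |I|^+)$.

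The main step is $(1) \Rightarrow (3)$, which mirrors (and slightly generalizes) the regularity argument used in the proof of \ref{approx-omega}. Given a $(\kappa_1, \kappa_2)$-cut $(\langle a_\alpha : \alpha < \kappa_1 \rangle, \langle b_\beta : \beta < \kappa_2 \rangle)$ in some $M^+_1$-definable linear order $(X, <_X)$, first note that since $M^+_1 = (M^+)^I/\de$ and $X$ is definable, one has $(X, <_X) = \prod_t (X_t, <_{X_t})/\de$ for suitable factors. Using regularity of $\de$ and the hypothesis $\kappa_1 + \kappa_2 \leq \lambda$, I would fix a map $d : (\kappa_1 \times \{0\}) \cup (\kappa_2 \times \{1\}) \to \de$ such that at each $t \in I$ only finitely many indices $x$ satisfy $t \in d(x)$. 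At each index $t$ put $S_t = \{ a_\alpha[t] : t \in d((\alpha, 0)) \} \cup \{ b_\beta[t] : t \in d((\beta, 1)) \}$ and let $h_t : S_t \to \omega$ be the unique order-preserving bijection onto the initial segment $[0, |S_t|)$; let $h$ be the internal function with these projections. By \lost theorem, $\langle h(a_\alpha) : \alpha < \kappa_1 \rangle$ and $\langle h(b_\beta) : \beta < \kappa_2 \rangle$ are strictly monotonic in $(\omega, <)^I/\de$.

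The one place care is needed — and the only non-routine point — is to verify this really is a cut, not merely a pre-cut. The reason $h_t$ is taken to have \emph{interval} image is precisely this: if some $c \in (\omega, <)^I/\de$ satisfied $h(a_\alpha) <_\omega c <_\omega h(b_\beta)$ for all $\alpha < \kappa_1$, $\beta < \kappa_2$, then at $\de$-almost every $t$ we would have $c[t]$ strictly between $h_t(a_\alpha[t])$ and $h_t(b_\beta[t])$ for every relevant $\alpha, \beta$, and hence $c[t] \in \rn(h_t)$ since $\rn(h_t)$ is an interval containing both endpoints. Defining $x \in X$ by $x[t] = h_t^{-1}(c[t])$ yields an element with $a_\alpha <_X x <_X b_\beta$ for all $\alpha, \beta$, contradicting that $(\overline{a}, \overline{b})$ is a cut. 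Thus $(\kappa_1, \kappa_2) \in \mc(\de)$. No substantial obstacle arises; the argument is exactly that of \ref{approx-omega} applied to an arbitrary $M^+_1$-definable linear order rather than to $(\omega, <)^I/\de$ itself.
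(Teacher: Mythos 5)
Your proposal is correct and its core is the same argument the paper uses: the main work is the regularity/order-collapse construction of \ref{approx-omega}, applied to an arbitrary $M^+_1$-definable linear order, with the requirement that $\rn(h_t)$ be an interval doing exactly the job of transferring cut-ness (the paper's proof of $(1)\Rightarrow(2)$ is a sub-claim explicitly described as ``almost exactly the same as \ref{approx-omega}''). The organizational difference is that you close the cycle as $(2)\Rightarrow(1)\Rightarrow(3)\Rightarrow(2)$ and land the collapse directly in $(\omega,<)^I/\de$, whereas the paper proves $(3)\Rightarrow(1)\Rightarrow(2)\Rightarrow(3)$, landing the collapse in the distinguished $X_\ma$ of \ref{up-csp}(2) and then passing from an arbitrary $X_\mb$ to $X_\ma$ via the uniqueness corollary \ref{n-is-enough}; your arrangement is marginally more economical in that it bypasses that machinery. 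One sentence of your cut-verification overreaches: for a fixed $c$ realizing the image pre-cut, \lost theorem does not give a single $\de$-large set of indices $t$ on which $c[t]$ lies between $h_t(a_\alpha[t])$ and $h_t(b_\beta[t])$ for \emph{every} relevant $\alpha,\beta$ simultaneously --- the distribution $d$ was fixed before $c$, so that quantifier exchange is not justified. But what you actually need follows from per-index applications of \lost theorem: a single fixed $\beta$ already gives $c[t]\in\rn(h_t)$ for $\de$-almost every $t$, since $\rn(h_t)$ is an initial segment containing $h_t(b_\beta[t])$; and then, for each fixed $\alpha$ (respectively $\beta$) separately, intersecting $d((\alpha,0))$ with the relevant \los sets and using that $h_t$ is an order-isomorphism onto its range yields $a_\alpha <_X h^{-1}(c) <_X b_\beta$, contradicting that $(\overline{a},\overline{b})$ is a cut. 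With that rephrasing the argument is complete.
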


\begin{proof}

(3) $\rightarrow$ (1): By Claim \ref{up-csp}(2).

\br
(1) $\rightarrow$ (2): It suffices to show that:

\step{Sub-claim}.
For any definable linear order $(Y, <_Y)$ in $M^+_1$ $[$i.e. both $Y$ and $<_Y$ are $M^+_1$-definable but this order is not necessarily in $\ord(\cs)$$]$ and any discrete $A \subseteq Y$, $|A| \leq \lambda$ (e.g. a representation of a cut) 
there exist a nontrivial $\ma \in \ord(\cs)$ and an internal partial injection $f$ such that:
\begin{enumerate}
\item[(a)] $A \subseteq \dom(f)$, $\rn(f) \subseteq X_\ma$
\item[(b)] $f$ is order preserving, i.e. for all $a, b \in \dom(f)$, $a \leq_Y b$ iff $f(a) \leq_\ma f(b)$
\item[(c)] $\rn(f)$ is an interval in $(X_\ma, \leq_\ma)$
\end{enumerate}

\br
\noindent The proof is almost exactly the same as that of \ref{approx-omega}, using $(Y, <_Y)$ here instead of the representation 
of the cut there, and letting $\ma \in \ord(\cs)$ be given by \ref{up-csp}(2). 
(The point: by regularity, any discrete linearly ordered set $|A| \leq \lambda$ in the ultrapower may be considered as a 
subset of some internal, pseudofinite linear order.)

\br
(2) $\rightarrow$ (3):
Assume (2), so there are regular cardinals $\kappa_1, \kappa_2$ with $\kappa_1 + \kappa_2 \leq \lambda$ and
some nontrival $\mb \in \ord(\cs)$ such that $X_\mb$ contains a $(\kappa_1, \kappa_2)$-cut. 
To conclude that $(\kappa_1, \kappa_2) \in \mc(\de)$, let $\ma$ be given by \ref{up-csp}(2). 
By Observation \ref{n-is-enough} (``any $\ma \in \ord(\cs)$ will work''), 
also $(X_\ma, <_\ma)$ has a $(\kappa_1, \kappa_2)$-cut, thus also $(\omega, <)^I/\de$. 
\end{proof}

\begin{claim} \label{up-csp3}
Let $\de, I, \lambda, M, M_1$ be as in \ref{up-csp} and let $\cs$ be a cofinality spectrum problem given by that Claim. 
For $\kappa = \cf(\kappa) \leq \lambda$, then the following are equivalent:

\begin{enumerate}
\item $\de$ has $\kappa^+$-treetops in the sense of $\ref{d:treetops}$.
\item $\kappa^+ \leq \xt_\cs$. 
\end{enumerate}
\end{claim}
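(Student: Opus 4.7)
The plan is to prove both directions of the equivalence by transporting between the two perspectives via \Los's theorem and the G\"odel-coding infrastructure of $M^+$ developed in Section \ref{s:towards}. Both (1) and (2) express the same treetops condition -- that $\tlf$-increasing sequences of cofinality $\leq \kappa$ in suitable trees have upper bounds -- but applied to different classes of trees: arbitrary ultrapower trees arising from tree-interpreting models for (1), versus the derived trees $\mct_\ma$ for $\ma \in \ord(\cs)$ for (2). Under the setup of \ref{up-csp}, where $M^+$ codes enough set theory, these two classes coincide up to definable embeddings.

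For $(1) \Rightarrow (2)$: Fix $\ma \in \ord(\cs)$ and a $\tlf_\ma$-increasing sequence $\langle c_i : i < \gamma \rangle$ in $\mct_\ma$ with $\gamma = \cf(\gamma) \leq \kappa$. By the ESTT axioms, the tree $\mct_\ma$ and its partial order $\tlf_\ma$ are defined in $M^+_1 = (M^+)^I/\de$ by formulas $\psi_0(x, \bar z_\ma), \psi_3(x, y, \bar z_\ma)$ in the language of $M^+$ with parameters $\bar c_\ma \in M^+_1$. For $\de$-almost every $t$, the same formulas with parameters $\bar c_\ma[t]$ define a tree $\mct^t$ in $M^+$; expanding the vocabulary of $M^+$ by predicate symbols interpreting this tree structure at each index (absorbing $\bar z_\ma$ into a richer language of constants interpreted at index $t$ by $\bar c_\ma[t]$) yields a model $\widetilde M^+$ whose ultrapower realizes $\mct_\ma$ as its interpreted tree by \Los's theorem together with Theorem \ref{commute-with-reducts}. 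Applying the hypothesis that $\de$ has $\kappa^+$-treetops to $\widetilde M^+$ produces an upper bound for $\langle c_i : i < \gamma \rangle$ in $\mct_\ma$; as $\ma$ and the sequence were arbitrary, $\kappa^+ \leq \xt_\cs$.

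For $(2) \Rightarrow (1)$: Suppose $\xt_\cs \geq \kappa^+$. Let $M_0$ be any $\kappa^+$-saturated model interpreting a tree $(\mct_0, \tlf_0)$ and let $\langle a_i : i < \gamma \rangle$ be a $\tlf_0$-increasing sequence of cofinality $\gamma \leq \kappa$ in $N_0 = M_0^I/\de$. Form a joint expansion $M_*$ of $M^+$ and $M_0$ in a language $\tau(M_*) \supseteq \tau(M^+) \cup \tau(M_0)$ in which both $M^+$ and $M_0$ are reducts of $M_*$; by Theorem \ref{commute-with-reducts}, $N_* = M_*^I/\de$ has both $M^+_1$ and $N_0$ as reducts, and the ultrapower tree $(\mct_0)^I/\de$ is definable in $N_*$. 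Using the G\"odel-coding available in $M^+$ (see \ref{p:trees}), we may exhibit a definable injective tree-preserving map from $(\mct_0)^I/\de$ into a derived tree $\mct_\mb$ of an enriched cofinality spectrum problem $\cs^* = (M, M_1, M_*, N_*, T^*, \Delta^*)$ extending $\cs$. Since the embedding preserves the tree order, any strictly $\tlf$-increasing sequence in $(\mct_0)^I/\de$ without upper bound would yield one in $\mct_\mb$ without upper bound; hence $\xt_{\cs^*} \geq \xt_\cs \geq \kappa^+$, and Observation \ref{t-def} applied to the image as a definable subtree of $\mct_\mb$ produces an upper bound for the image of $\langle a_i : i < \gamma \rangle$, which pulls back through the embedding to an upper bound in $(\mct_0)^I/\de$.

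The main obstacle lies in the coding step in direction $(2) \Rightarrow (1)$: the derived trees $\mct_\ma$ have the rigid form of trees of $X_\ma$-indexed sequences valued in $X_\ma$, so an arbitrary ultrapower tree must be embedded into one of these via G\"odel coding. The richness of $M^+$ in the setup of \ref{up-csp} (coding sufficient set theory, per Example \ref{d:context1}) is precisely what makes this possible, via the infrastructure of Section \ref{s:towards}, in particular the existence of coverable $\ma \in \ord(\cs)$ with enough arithmetic to encode arbitrary sequences. Once the embedding is in place, the closure conditions of ESTT for the enriched $\Delta^*$ are routine to verify because $M^+$ supports the required coding, and the argument concludes by the transfer described above.
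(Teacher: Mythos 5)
Your direction (1)~$\Rightarrow$~(2) is essentially the easy direction (the paper treats it as immediate), modulo the small slip that expanding by constants interpreted at index $t$ by $\overline{c}_\ma[t]$ produces an ultraproduct of varying expansions rather than an ultrapower of a single model; that is repairable. The genuine gap is in (2)~$\Rightarrow$~(1), at the step ``we may exhibit a definable injective tree-preserving map from $(\mct_0)^I/\de$ into a derived tree $\mct_\mb$.'' No such definable (i.e.\ internal) map exists in general: in the setting of \ref{up-csp} the derived trees consist of sequences of bounded length with values in a pseudofinite order, so $\mct_\mb$ is, mod $\de$, a product of trees that are finite at (almost) every index, whereas $(\mct_0)^I/\de$ is internally infinite at every index (take $\mct_0 = {^{\omega>}\omega}$); an internal injection would yield injections of an infinite tree into finite trees at almost every index. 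G\"odel coding inside $M^+$ cannot circumvent this, since the obstruction is index-wise size, not definability. The paper's proof of this direction instead begins with the regularity of $\de$: the given increasing sequence has length $\kappa \leq \lambda$, so a regularizing family/distribution produces finite trees $(\mct_t, \tlf^t)$ with $\prod_t (\mct_t,\tlf^t)/\de$ a subtree of the ultrapower tree containing the sequence, and only this pseudofinite subtree is embedded, via index-wise injective order-preserving maps $f_t$ into $\mct_\ma^{M^+}$ for the $\ma$ of \ref{up-csp}(2). The absence of an upper bound is then transferred by an index-wise argument: if the image had a bound $b_*$, one takes at each index the maximal element of the finite, linearly ordered set of image points below $b_*[t]$, pulls back by $f_t^{-1}$, and \L o\'s gives an upper bound for the original sequence, a contradiction. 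Note also that your ``pulls back through the embedding'' needs exactly this, since an upper bound in the target tree need not lie in the image. This reduction-by-regularity to a pseudofinite subtree is the missing idea, and without it the proposed embedding step fails.

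A secondary problem: your enrichment to $\cs^* \geq \cs$ gets the key inequality backwards. Since $\tr(\cs^*) \supseteq \tr(\cs)$, enlarging the problem can only lower the treetop cardinal, i.e.\ $\xt_{\cs^*} \leq \xt_{\cs}$, so hypothesis (2) does not license applying \ref{t-def} to a new tree $\mct_\mb \in \tr(\cs^*)$; indeed ``treetops for $\mct_\mb$'' is essentially what you are trying to prove. The paper avoids this entirely by taking the target tree inside the original $\ord(\cs)$, namely $\mct_\ma$ for the pseudofinite-cut-capturing $\ma$ supplied by \ref{up-csp}(2), so no enlargement of the cofinality spectrum problem is needed.
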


\begin{proof}
Clearly (1) $\rightarrow$ (2). 

To show (2) $\rightarrow$ (1), we prove the contrapositive. That is, we show that:

\step{Subclaim.} If $(\mct, \tlf_\mct)$ is any tree definable in $M^+$,
not necessarily an element of $\tr(\cs)$, and there is in 
$(\mct, \tlf_\mct)^I/\de = (\mct, \tlf_\mct)^{M^+_1}$ an increasing sequence 
of length $\kappa = \cf(\kappa) \leq \lambda$ with no upper bound, \emph{then} there is
$\ma \in \ord(\cs)$ such that in $\mct_\ma$ there is an increasing sequence of length $\kappa$ with no upper bound. 

\br
\noindent
(While $\ma$ nontrivial will be guaranteed by choosing $\ma$ from \ref{up-csp}(2), it follows from 
$\mct_\ma$ having arbitrarily long paths.) 

So let such a $(\mct, \tlf_\mct)$ be given. Let $\overline{c} = \langle c_\alpha : \alpha < \kappa \rangle$ 
be an increasing sequence in $\mct$ with no upper bound. By regularity, as $\kappa \leq \lambda$  there is a map 
$d : \kappa \rightarrow \de$ whose image is a regularizing family. In other words, by \lost theorem, 
we may assume that there is a sequence of finite trees $(\mct_t, \tlf^t_\mct)$ for $t \in I$ such that
\[ \prod_t (\mct_t, \tlf^t_\mct) /\de  \]
is a subtree of $(\mct, \tlf_\mct)^{M^+_1}$ which includes the sequence $\overline{c}$. 
Let $\ma \in \ord(\cs)$ be given by \ref{up-csp}(2).

Analogously to \ref{approx-omega}, we may choose at every (or almost every) index $t \in I$ a function $f_t: (\mct_t, \tlf^t_\mct) \rightarrow (\mct^{M^+}_\ma, \tlf^{M^+}_\ma)$ 
such that $f_t$ is injective and respects the partial ordering, i.e. for $x, y \in \dom(f_t)$ we have that $x \tlf^t_\mct y$ iff 
$f_t(x) \tlf^{M^+}_\ma f_t(y)$.  
Now let $f = \prod_t f_t /\de$ and suppose for a contradiction that
$\langle b_\alpha := f(c_\alpha) : \alpha < \kappa \rangle$ has an upper bound in $\mct_\ma$, call it $b_*$.  Consider the map
$d^\prime : \kappa \rightarrow \de$ given by
$\alpha \mapsto d(\alpha) \cap \{ t\in I : b_\alpha[t] \tlf b_*[t] \} \cap \{ t \in I : f_t$ is injective and respects the partial ordering $\}$. 
Now for each $t \in I$, the set $B_t := \{ b_\alpha[t] : \alpha < \kappa \land t \in d^\prime(\alpha) \}$ is linearly ordered by $\tlf$, 
by the choice of $b_*$. 
For each $t \in I$, let $b_t$ be the maximal element of $B_t$ under this linear ordering. Then by \lost theorem and the choice of the $f_t$s, we have that
the element $c_* := \prod_t f^{-1}_t(b_t)/\de$ is well defined. By \lost theorem (recalling that $\mct$ is definable) $c_* \in \mct$, 
and again by \lost theorem $c_*$ is an upper bound for the sequence $\overline{c}$ in $\mct$, contradiction. 
We have shown that $\langle b_\alpha : \alpha < \kappa \rangle$ is an increasing sequence in $\mct_\ma$ with no upper bound, 
which completes the proof. 
\end{proof}

\begin{concl} \label{up-concl}
Regular ultrapowers extending the theory of linear order 
may be regarded as cofinality spectrum problems, 
for which the specialized definitions $\mc(\de)$,  
$|I|^+$-treetops retain their intended meaning as stated in $\ref{up-csp2}$ and $\ref{up-csp3}$.  
\end{concl}

\begin{proof}
By \ref{up-csp}, \ref{up-csp2}, \ref{up-csp3}.
\end{proof}

We may now give the analogue of Theorem \ref{no-cuts} for regular ultrapowers:

\begin{theorem} \label{maximal-uf}
Let $\de$ be a regular ultrafilter on $I$, $|I| = \lambda \geq \aleph_0$. 
If $\de$ has $\lambda^+$-treetops, then $\de$ is $\lambda^+$-good.
\end{theorem}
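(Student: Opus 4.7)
The plan is to deduce this theorem by assembling the translations already proved (Claims \ref{up-csp}, \ref{up-csp2}, \ref{up-csp3}) with the paper's main combinatorial Theorem \ref{no-cuts} and Corollary \ref{empty-good}. In other words, the real work has already been done: given a regular ultrafilter $\de$ on $I$ with $\lambda^+$-treetops, the task is to exhibit a cofinality spectrum problem $\cs$ for which ``$\xt_\cs > \lambda$'' faithfully encodes $\lambda^+$-treetops and ``$\mc(\cs, \xt_\cs) = \emptyset$'' faithfully encodes $\lambda^+$-goodness, and then quote Theorem \ref{no-cuts}.

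Concretely, I would proceed as follows. First, let $M$ be any model extending $(\omega, <)$ and let $M_1 = M^I/\de$. Apply Claim \ref{up-csp} to obtain expansions $M^+, M^+_1$ with $M^+_1 = (M^+)^I/\de$ and a distinguished set of formulas $\Delta$ such that $\cs = (M, M_1, M^+, M^+_1, Th(M^+), \Delta)$ is a cofinality spectrum problem whose $\ord(\cs)$ contains a nontrivial $\ma$ capturing pseudofinite cuts in the sense of Definition \ref{s-c}. Second, invoke Claim \ref{up-csp3}: the hypothesis that $\de$ has $\lambda^+$-treetops translates into $\xt_\cs > \lambda$, i.e. $\xt_\cs \geq \lambda^+$. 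Third, apply Theorem \ref{no-cuts} to $\cs$, which yields $\mc(\cs, \xt_\cs) = \emptyset$; in particular, for every pair $(\kappa_1, \kappa_2)$ of regular cardinals with $\kappa_1 + \kappa_2 \leq \lambda$ we have $(\kappa_1, \kappa_2) \notin \mc(\cs, \xt_\cs)$. Fourth, by Claim \ref{up-csp2} this is equivalent to saying that $(\omega, <)^I/\de$ has no $(\kappa_1, \kappa_2)$-cut for such pairs, i.e. $\mc(\de) = \emptyset$. Finally, by Corollary \ref{empty-good}, $\mc(\de) = \emptyset$ is equivalent to $\de$ being $\lambda^+$-good, which is the desired conclusion.

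The main obstacle, in the present theorem, is not in the deduction itself but in the apparatus it draws on: the deep step is Theorem \ref{no-cuts}, which rests on Lemma \ref{last-cut} and the Uniqueness/Symmetry machinery of Section \ref{s:key-claims}. The one point in the proposal that merits a brief sanity check is the translation step: one must be sure that an arbitrary $(\kappa_1, \kappa_2)$-cut in $(\omega,<)^I/\de$ actually corresponds to a cut inside some $X_\ma$ for $\ma \in \ord(\cs)$ (and not merely in some non-pseudofinite definable order), and conversely that treetops in $\tr(\cs)$ reflect treetops for arbitrary definable trees in $M^+_1$; both of these are exactly the content of Claims \ref{up-csp2} and \ref{up-csp3}, whose proofs rely on regularity to compress any set of size $\leq \lambda$ into an internal pseudofinite object. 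Thus the theorem follows formally, and I would simply cite these results rather than reprove them.
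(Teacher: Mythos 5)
Your proposal is correct and is essentially identical to the paper's own proof: it assembles Claim \ref{up-csp} (to associate a cofinality spectrum problem to $\de$), Claim \ref{up-csp3} (treetops gives $\lambda^+ \leq \xt_\cs$), Theorem \ref{no-cuts} (so $\mc(\cs, \xt_\cs) = \emptyset$, hence $\mc(\cs, |I|^+) = \emptyset$), Claim \ref{up-csp2} (so $\mc(\de) = \emptyset$), and Corollary \ref{empty-good} (equivalently Fact \ref{max-good}) to conclude $\lambda^+$-goodness. The paper's proof cites exactly these results in the same way, so nothing further is needed.
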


\begin{proof}
Let $\cs$ be a cofinality spectrum problem associated to $\de$, given by \ref{up-csp}.  
By Theorem \ref{no-cuts}, $\mc(\cs, \xt_\cs) = \emptyset$. 
By Claim \ref{up-csp3} and the assumption of $\lambda^+$-treetops, 
$\lambda^+ \leq \xt_\cs$, so necessarily $\mc(\cs, |I|^+) = \emptyset$. 
Apply Claim \ref{up-csp2} to conclude $\mc(\de) = \emptyset$. 
Then by Corollary \ref{empty-good} (or Fact \ref{max-good})  
$\de$ is $\lambda^+$-good, which completes the proof.
\end{proof}

Moreover, the result from Section \ref{s:symmetric} may here be strengthened to a characterization: 

\begin{lemma} \label{sym-cuts}
Let $\de$ be a regular ultrafilter on $I$, $|I| = \lambda$. 
Then the following are equivalent:
\begin{enumerate}
\item $\kappa = \cf(\kappa) \leq \lambda = |I|$ implies $(\kappa, \kappa) \notin \mc(\de)$.
\item $\de$ has $|I|^+$-treetops. 
\end{enumerate}
\end{lemma}

\begin{proof}
(2) $\rightarrow$ (1):  Lemma \ref{c:motiv}. 

(1) $\rightarrow$ (2):  We prove the contrapositive. Suppose that for some regular $\kappa \leq |I|$ and some model $M$ (in a countable 
signature), $M$ interprets, or without loss of generality, 
defines a tree $(\mct, \tlf_\mct)$ whose $\de$-ultrapower contains a path of length $\kappa$ with no upper bound. 
Also without loss of generality, $M$ expands $(\omega, <)$; since ultrapowers commute with reducts, there is no harm in adding this order   
under a disjoint unary predicate, or as a separate sort. 

Let $\cs$ be a cofinality spectrum problem given by Claim \ref{up-csp}.  By Claim \ref{up-csp3}, there is $\mct_\ma \in \tr(\cs)$ 
which contains a path of length $\kappa$ with no upper bound. By Lemma \ref{treetops-sym}, 
there is a definable (i.e. definable in $M^+_1$)  linear order which has a $(\kappa, \kappa)$-cut. Now apply 
Claim \ref{up-csp2} to conclude that $(\kappa, \kappa) \in \mc(\de)$, which completes the proof. 
\end{proof}

Thus we obtain the following new characterization of Keisler's notion of goodness. 

\begin{theorem-m1} \label{maximal-x}
Let $\de$ be a regular ultrafilter on $I$, $|I| = \lambda$. Then the following are equivalent:
\begin{enumerate}
\item $\de$ is $\lambda^+$-good.
\item $\de$ has $\lambda^+$-treetops.
\item $\mc(\de)$ contains no symmetric cuts.
\item $\mc(\de) = \emptyset$.
\end{enumerate}
\end{theorem-m1}

\begin{proof}

(2) $\leftrightarrow$ (3): Lemma \ref{sym-cuts}. 

(4) $\rightarrow$ (3): Immediate. 

(2) $\rightarrow$ (1): Theorem \ref{maximal-uf}.

(1) $\leftrightarrow$ (4): Fact \ref{max-good}.
\end{proof}

\begin{rmk}
By a theorem of Malliaris and Shelah, 
to the equivalent conditions of Theorem \ref{maximal-x} we may add: 
$\de$ is $\lambda^+$-excellent, see \cite{MiSh:999} Theorem 5.2. 
\end{rmk}

\section{$SOP_2$ implies maximality in Keisler's order} \label{s:sop2-reg} \label{s:sop2}

In this section, we first show that for a regular ultrafilter $\de$ on $I$, $\de$ has $\lambda^+$-treetops precisely when $\de$-ultrapowers 
realize $SOP_2$-types, as defined in \ref{conventions}(3).  
We then prove Main Theorem \ref{concl:sop2-max}, showing that any theory with $SOP_2$ is maximal in Keisler's order, 
which solves one of the two open problems mentioned in the introduction. 

\begin{defn} \label{sop2-tree} \emph{(Shelah \cite{Sh:93})} 
The theory $T$ has $SOP_2$ if there is a formula $\psi(x;\overline{y})$ such that in some model $M \models T$
there exist $\langle \overline{a}_\eta : \eta \in {^{\kappa > } \mu} \rangle$, called an \emph{$SOP_2$ tree for $\psi$}, such that:

\begin{enumerate}
\item for $\eta, \rho \in {^{\kappa > } \mu}$ incomparable, i.e. $\neg (\eta \tlf \rho) \land \neg (\rho \tlf \eta)$, we have that
$\{ \psi(x;\overline{a}_\eta), \psi(x;\overline{a}_\rho) \}$ is inconsistent.
\item for $\eta \in {^{\kappa } \mu}$, $\{ \psi(x;\overline{a}_{\eta|_i}) : i < \kappa \}$ is a consistent partial $\psi$-type.
\end{enumerate}
\end{defn}

By compactness, clearly we can replace ${^{\kappa > } \mu}$ by ${^{\omega > } 2}$ in the definition. 

\begin{defn}[Definitions and conventions on $SOP_2$] \label{conventions} In this section, 
\begin{enumerate}
\item The formula $\psi=\psi(x;y)$ will denote a formula with $SOP_2$, and $\ell(x), \ell(y)$ need not be 1. 
\item If $M^I/\de$ is a regular ultrapower, by ``$SOP_2$-type'' or ``$SOP_2$-$\kappa$-type''
we will mean a partial type $p(x) = \{ \psi(x;a_\ell) : \ell < \kappa \}$ 
almost all of whose projections to the index model come from an $SOP_2$-tree for $\psi$.  
\item Say that \emph{$\de$ realizes all $SOP_2$-types} to mean that all $SOP_2$-$|I|$-types are realized in all ultrapowers $M^I/\de$. 
\item All $SOP_2$-types considered will be $SOP_2$-$\mu$-types for $\mu \leq |I|$. \lp A regular ultrapower of a non-simple theory
will fail to be $|I|^{++}$-saturated by prior work of the authors \cite{MiSh:996}.\rp
\item As ultrapowers commute with reducts, we will freely assume that the theory in question has enough set theory for trees, Definition $\ref{d:estt}$.
\end{enumerate}
\end{defn}

Note that saying $\de$ realizes all $SOP_2$-types in the sense of Definition $\ref{conventions}$ certainly need not imply $($a priori$)$ that
for any $\vp$ with $SOP_2$, all $\vp$-types are realized in $\de$-ultrapowers. By the usual coding tricks
one may take the disjoint union of a formula with $SOP_2$ and one with e.g. $SOP$; such a formula will
necessarily be maximal. Rather, Definition $\ref{conventions}$ captures the essential structure in the sense that any
$\de$ which is able to realize all $\psi$-types for \emph{some} formula $\psi$ 
with $SOP_2$ will necessarily realize all $SOP_2$-types in the sense of Definition $\ref{conventions}(5)$.  
That said, it will follow a posteriori from 
Theorems \ref{no-cuts} and \ref{sop2-concl}
that realizing this minimal set of $SOP_2$ types is, indeed, strong enough to guarantee $\lambda^+$-saturation in general.

\begin{cor} \label{type}
The type $p$ is an $SOP_2$ type in $M^\lambda/\de$ if and only if we may add a predicate $P$ of arity $\ell(y)$ to the vocabulary $\tau$, and
for each $i \in I$ define $M_i$ as the index model $M$ expanded to a model of $\tau \cup \{ P \}$ in which $P$ names an $SOP_2$-tree, so that
in the ultraproduct $N = \prod_i M_i /\de$ we have that $p$ is a type whose parameters come from $P^N$.
\end{cor}

\begin{proof}
By \lost theorem and Definition \ref{conventions}.  
\end{proof}

Given instances $\psi(x;a_i), \psi(x;a_j)$ belonging to some consistent partial $SOP_2$-type, and some index $s \in I$,
we may write $a_i[s] \tlf a_j[s]$ to indicate comparability in the chosen $SOP_2$-tree at index $s$, and likewise
$a_i \tlf a_j$ to indicate comparability in the $SOP_2$-tree induced on $N$ by $P$. Since ultrapowers commute with reducts,
we may consider $\tlf$ as an additional relation in some expansion of the language.

We may thus consider $SOP_2$-types as arising in the following canonical way. 

\begin{defn} \label{canonical} \emph{(The canonical presentation)}
\begin{enumerate}
\item Let $T^\prime_{SOP_2}$ be the universal first-order theory in the vocabulary $\{ P, Q, \tlf, R \}$ such that
$M \models T^\prime_{SOP_2}$ if:
\begin{enumerate}
\item $M$ is the disjoint union of $P^M$, $Q^M$
\item $R^M \subseteq Q^M \times P^M$
\item $\tlf^M \subseteq P^M \times P^M$
\item $(P^M, \tlf)$ is a tree
\item if $a_1 \neq a_2 \in P^M$, $\neg(a_1 \tlf^M a_2) \land \neg (a_2 \tlf^M a_1)$ then
$M \models \neg (\exists x) (x R a_1 \land x R a_2)$.
\end{enumerate}
\item Let $\ts$ be the model completion of $T^\prime_{SOP_2}$.
\item We say that the regular ultrafilter $\de$ on $I$
\[ (\lambda^+, Q)-\mbox{saturates} ~\ts \]
\noindent if whenever $M \models \ts$
we have that $M^I/\de$ realizes all 1-types $q(x)$ such that $|q(x)| \leq \lambda$ and $Q(x) \in q(x)$.
\end{enumerate}
\end{defn}

\begin{rmk} \label{tq}
Since ultrapowers commute with reducts, Section $\ref{s:defns}$ Theorem $\ref{commute-with-reducts}$,
for any regular ultrafilter $\de$ on $I$ and any model $M$ whose theory has $SOP_2$, we clearly have that
$N = M^I/\de$ realizes all $SOP_2$-$\mu$-types if and only if $\de$ $(\mu^+, Q)$-saturates $\ts$. In what follows, we will use these two
presentations interchangeably.
\end{rmk}

Note that the goal of Definition \ref{canonical} is simply to standardize the presentation of $SOP_2$-types, which are focused on the single
formula $\psi$ (in the case of that definition, $xRy$); in particular, it makes no claim to have constructed a minimally complex $SOP_2$ theory from
any point of view other than that of capturing the necessary $xRy$-types. Recall the definition of distribution, \ref{dist} above.

Having set the stage,  
we state a simple criterion for a regular ultrafilter $\de$ to realize $SOP_2$, in terms of upper bounds for
increasing sequences in trees.

\begin{lemma} \label{equiv-conds} \emph{($SOP_2$-types and treetops)}
Let $|I| = \lambda$.
Let
\[ \mathbf{P} = \{ p :p = \{ \psi(x;a_i) : i < \lambda \} ~\mbox{is an $SOP_2$-type in $N = M^I/\de$, $|p| \leq |I|$}\} \]
Then the following are equivalent:
\begin{enumerate}
\item Each $p \in \mathbf{P}$ is realized in $N$.
\item Each $p \in \mathbf{P}$ has a distribution $d$ such that for $\de$-almost all $s$, for all $i, j < \lambda$,
\[ s \in d(i) \cap d(j) \mbox{ implies } \left( a_i[s] \tlf a_j[s]\right) \lor \left(a_j[s] \tlf a_i[s] \right) \]
\item if $(\mct, \tlf_\mct)$ is any tree 
and $\langle c_i : i < \lambda \rangle$ is a $\tlf_\mct$-increasing sequence
in $N_\mct := (\mct, \tlf_\mct)^I/\de$, then $\langle c_i : i < \lambda \rangle$ has an upper bound in $N_\mct$. That is, there exists
$c \in N_\mct$ such that $i < \lambda$ implies $c_i \tlf_\mct c$.
\end{enumerate}
\end{lemma}

\begin{proof}
(1) $\rightarrow$ (2) Let $p$ be given, let $\alpha \in N$ be a realization of $p$,
and let $\langle X_i : i < \lambda \rangle$ be a $\lambda$-regularizing family for $\de$.
Then the distribution $d: \fss(\lambda) \rightarrow \de$ given by:
\begin{itemize}
\item $\{ i \} \mapsto \{ s \in I : M \models \psi(\alpha[s], a_i(s)) \} \cap X_i $
\item for $|u| > 1$, $u \mapsto \bigcap \{ d(\{i\}) : i \in u \}$
\end{itemize}
satisfies the criterion (2) by definition of $SOP_2$.

(2) $\rightarrow$ (1) For any given $p \in \mathbf{P}$,
if (2) holds then it is easy to define a realization $\alpha[s]$ in each index model by definition of $SOP_2$, and
any $\alpha \in N$ with $\alpha = \prod_s \alpha[s] \mod \de$ will realize the type by \lost theorem.

(3) $\rightarrow$ (2) Let $(P, \tlf)$ be the (infinite) $SOP_2$-tree in $M \models \ts$.  
Let $p \in \mathbf{P}$ be given, where $p = \{ \psi(x;a_i) : i < \mu \}$. Then the sequence
$\langle a_i : i < \lambda \rangle$ is $\tlf$-increasing and thus has an upper bound $c$ in $M^I/\de$.
Then the distribution $d$ given by
\[\{i\} \mapsto \{ s \in I : a_i[s] \tlf c[s] \} \]
and for $|u| > 1$, $u \mapsto \bigcap \{ d(\{i\}) : i \in u \}$, satisfies (2) by definition of $SOP_2$: 
any consistent set of instances must lie along a branch. 

(2) $\rightarrow$ (3) By compactness, we may suppose that $(P, \tlf)$ contains an $\omega$-branching tree of height $\omega$. 
We would like to realize the type $\{ x > c_i : i < \lambda \}$ in $(\mct, \tlf_\mct)^I/\de$. 
Fix some distribution $d_\mct$ of this type.  Then $d_\mct$ assigns finitely many
formulas to each index model, and we may build a corresponding $SOP_2$-type $p=\{ xRa_i : i < \lambda \}$ by copying the patterns
at each index: for each $s \in I$ let $a_i[s] \tlf a_j[s]$ if and only if $c_i[s] \tlf_\mct c_j[s]$, and then choose each $a_i \in M^I$
so that $a_i = \prod_{s \in I} a_i[s] \mod \de$. Since $\{ c_i : i < \lambda \}$ is $\tlf_\mct$-increasing, 
by \lost theorem $p$ will be a consistent $SOP_2$-type,
so will, by assumption, have a distribution $d$ satisfying (2). Then $d$ naturally refines $d_\mct$ and gives a distribution in which
for each $s \in I$, the set $C[s] := \{ c_i[s] : i < \lambda, s \in d(\{i\}) \}$ is finite and $\tlf_\mct$-linearly ordered in 
$(\mct, \tlf_\mct)$. Choose $c \in (\mct, \tlf_\mct)^I$ so that 
$c[s]$ is the $<$-maximum element of $C[s]$ in each index model, and $c/\de$ will be an upper bound by \lost theorem.
\end{proof}

\begin{rmk}
In Lemma \ref{equiv-conds}(2) $\rightarrow$ (3), it is $SOP_2$ rather than simply the tree property which is used.
\end{rmk}

On the level of theories, treetops therefore gives a necessary condition for saturation:

\begin{cor} \label{sop2-treetops}
Let $\de$ be a regular ultrafilter on $\lambda$ and suppose that $\de$ saturates some theory with $SOP_2$. Then $\de$ has
$\lambda^+$-treetops.
\end{cor}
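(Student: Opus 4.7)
My plan is to derive the corollary as an essentially immediate consequence of Claim \ref{equiv-conds}, which has already done the real work of translating between $SOP_2$-type realization and existence of upper bounds in trees.

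First, I would unpack the hypothesis. Suppose $\de$ on $I$, $|I| = \lambda$, saturates some countable complete theory $T$ with $SOP_2$, and let $\psi(x;\overline{y})$ be a formula witnessing $SOP_2$ in $T$. Then for any $M \models T$, the ultrapower $N = M^I/\de$ is $\lambda^+$-saturated, so in particular every $\psi$-type over a set of parameters of size $\leq \lambda$ (from an internal $SOP_2$-configuration in the sense of Convention \ref{conventions}(3)-(5)) is realized. Thus condition (1) of Claim \ref{equiv-conds} holds for $\de$.

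Next, I would invoke the equivalence (1) $\iff$ (3) of Claim \ref{equiv-conds} to conclude condition (3): for any tree $(\mct, \tlf_\mct)$ and any $\tlf_\mct$-increasing sequence $\langle c_i : i < \lambda \rangle$ in $(\mct, \tlf_\mct)^I/\de$, there is an upper bound in the ultrapower. Here the passage from an abstract tree to an $SOP_2$-configuration uses only that $\psi$ has $SOP_2$, so the existence of an $\omega$-branching $SOP_2$-tree in $M$ (by compactness) is enough to translate the ``distribution matching the tree structure index by index'' argument of Claim \ref{equiv-conds}(2) $\Rightarrow$ (3).

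Finally, I would observe that this is exactly the definition of $\lambda^+$-treetops as given in Definition \ref{d:treetops}: for every regular $\gamma < \lambda^+$ (in particular $\gamma = \lambda$, and the smaller cases follow by taking a cofinal subsequence) and every tree interpreted in some model, any $\tlf$-increasing sequence of length $\gamma$ in the $\de$-ultrapower has an upper bound. Since the case of an increasing $\lambda$-sequence is the only nontrivial one (shorter sequences reduce to it by padding or by taking bounds of cofinal subsequences inside the given tree), the conclusion follows. The main conceptual point is simply that Claim \ref{equiv-conds} was stated in enough generality that no further work is needed; there is no real obstacle, only bookkeeping to match the formal definition of $\lambda^+$-treetops against condition (3) of the claim.
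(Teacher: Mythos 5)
Your proposal is correct and is essentially the paper's own argument: the paper proves this corollary in one line, translating Claim \ref{equiv-conds} (realization of $SOP_2$-types $\iff$ upper bounds for increasing sequences in ultrapowers of trees) via Remark \ref{tq}, exactly as you do. One small caveat: your ``padding'' reduction of shorter increasing sequences to the length-$\lambda$ case does not literally work (there is no non-decreasing cofinal map from $\lambda$ onto a regular $\gamma<\lambda$), but this is harmless, since the proof of \ref{equiv-conds}(2)$\Rightarrow$(3) runs verbatim for increasing sequences of any regular length $\gamma\leq\lambda$ (regularity of $\de$ still supplies the needed distribution, and $\lambda^+$-saturation realizes the corresponding $SOP_2$-$\gamma$-type), which settles the bookkeeping against Definition \ref{d:treetops}.
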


\begin{proof}
This simply translates Lemma \ref{equiv-conds} via Remark \ref{tq}.
\end{proof}

\begin{concl} \label{sop2-concl}
Let $\de$ be a regular ultrafilter on $I$, $|I| = \lambda$. 
Then recalling Definition \ref{conventions}, the following are equivalent:
\begin{enumerate}
\item $\de$ has $\lambda^+$-treetops.
\item $\mc(\de)$ contains no symmetric cuts.
\item $\de$ realizes all $SOP_2$-types over sets of size $\lambda$.
\end{enumerate}
\end{concl}

\begin{proof}
$(1) \iff (2)$: Lemma \ref{sym-cuts}.

$(1) \iff (3)$: Lemma \ref{equiv-conds}.
\end{proof}

\begin{rmk}
Clearly in \ref{sop2-concl}, i.e. in its constituent claims, 
one could separate the role of $\lambda$ from the size of the index set, using 
$\cts$ instead of $\mc(\de)$. 
\end{rmk}

This yields: 

\begin{theorem-m1} \label{concl:sop2-max}
Every theory with $SOP_2$ is maximal in Keisler's order.  
\end{theorem-m1}

\begin{proof}
By Keisler's characterization, Section \ref{s:defns} Theorem \ref{good-max}, it suffices to show that 
if $\de$ is a regular ultrafilter on $I$, $|I| = \lambda$ and $M \models T$ then
$M^I/\de$ is $\lambda^+$-saturated only if $\de$ is $\lambda^+$-good.  

By Conclusion \ref{sop2-concl}, a necessary condition for any regular ultrafilter $\de$ on $\lambda$ 
to saturate $T$ is that $\de$ have $\lambda^+$-treetops. By Theorem \ref{maximal-uf}, any regular ultrafilter
$\de$ on $\lambda$ with $\lambda^+$-treetops must be $\lambda^+$-good. 
So a necessary condition for $\de$ to saturate $T$ is that $\de$ be good, which completes the proof. 
\end{proof}

\begin{disc} \label{c:evidence}
\emph{To conclude this section, we review some evidence for Conjecture \ref{conj:a} from the introduction, which says that $SOP_2$ characterizes maximality in Keisler's order.  
Any non-simple theory either contains a minimally inconsistent tree, called $TP_2$, or a maximally inconsistent tree, called $TP_1$, or both (\cite{Sh:a} Theorem III.7.11). $TP_1$ may be identified with $SOP_2$, the lowest level of the so-called $SOP_n$ hierarchy of properties whose complexity, in some sense, approaches that of linear order as $n$ grows.  (Considered as a property of formulas, $SOP_2$ is much weaker than $SOP_3$; it is open whether they coincide for first order complete $T$.) Briefly, then, the move from $SOP_3$ to $SOP_2$ moves Keisler's order out of the territory of the $SOP_n$ hierarchy onto what appears to be a major dividing line for which there are strong general indications of a theory. $NSOP_2$ (=not $SOP_2$) is in some senses, close to simplicity; we hope to develop this theory in light of our work here, leveraging the tool of Keisler's order. $NSOP_2$ and Conjecture \ref{conj:a} also connect to 
work of D\v{z}amonja-Shelah \cite{DzSh} and Shelah-Usvyatsov \cite{ShUs} on a weaker, related ordering; there it was shown, for instance, that under GCH $NSOP_2$ is necessarily non-maximal in that ordering (\cite{ShUs} 3.15(2)) thereby strengthening the case for Conjecture \ref{conj:a}.}

\emph{Finally, there is a key analogy in this case between the independence/strict order dichotomy for non-stable theories and the $TP_2/SOP_2$ dichotomy for non-simple theories.  
There is a Keisler-minimum unstable theory, the random graph, and as already noted strict order implies maximality. By a theorem of Malliaris \cite{mm4}, quoted below in Section \ref{s:tfeq} as Theorem \ref{tfeq-m}, there is a Keisler-minimum theory among the theories with $TP_2$.  
In Section \ref{s:tfeq} below, we apply Theorem \ref{concl:sop2-max} to prove that this theory is indeed a minimum non-simple theory in Keisler's order.}
\end{disc}

\section{If $\de$ is good for some non-simple theory then $\de$ is flexible} \label{s:homogeneous}

In this section, we connect treetops to  several key model-theoretically meaningful properties of ultrafilters. 
The main result is that any ultrafilter which is good for some non-simple theory must be flexible (defined below), Conclusion \ref{flex-not-simple}. 
Also, with an eye to Conjecture \ref{conj:a}, we further develop the picture of internal maps between sequences from Corollary \ref{bijection}. 

It is known that Keisler classes other than the maximal class may be 
characterized by properties of filters, for example: 

\begin{defn}
Let $\de$ be a regular ultrafilter on $I$, $|I| = \lambda$. Say that $\de$ has \emph{$2$-separation} if whenever
$N = M^I/\de$ is infinite and $A, B \subseteq N$ are disjoint sets of size $\leq \lambda$, there is an internal predicate $P$
which contains $A$ and is disjoint from $B$.
\end{defn}

\begin{fact} \label{rg-good} \emph{(The Keisler-class of the random graph, see e.g. \cite{MiSh:996})}
There is a minimum class in Keisler's order among the unstable theories, which includes the theory of the random graph.
It can be characterized set-theoretically as the class of countable complete theories which are saturated precisely
by regular ultrafilters with $2$-separation.
\end{fact}

\begin{cor}
If $\de$ is a regular ultrafilter on $\lambda$ which has $\lambda^+$-treetops, then $\de$ has $2$-separation. 
\end{cor}

\begin{proof}
By Conclusion \ref{must-be-good} and Theorem \ref{maximal-uf}. 
\end{proof}

The following definition is standard for ultrapowers and coincides with the definition for CSPs when $\de$ has treetops. 

\begin{defn} \emph{(Lower cofinality)} \label{d:lcf}
Let $\de$ be a regular ultrafilter on $I$ and let $\kappa \leq \lambda = |I|$ be a regular cardinal. Let $N = (\lambda, <)^I/\de$.
The \emph{lower cofinality of $\kappa$ modulo $\de$}, written $\lcf(\kappa,\de)$, is the cofinality of the set
$\{ a \in N : \zeta \in \kappa \mbox{ implies } N \models a > \zeta \}$ considered with the reverse order type.
In other words, it is the smallest regular cardinal
$\rho$ so that there is a $(\kappa, \rho)$-cut in $N$ half of which is given by the diagonal embedding of $\kappa$.
This is also called the \emph{coinitiality} of $\kappa$ with respect to $\de$. 
\end{defn}

Theorem \ref{m2} (Uniqueness) need not hold for regular ultrapowers without the assumption of treetops: the following theorem gives a
family of examples where it will fail.

\begin{thm-lit} \emph{(Shelah \cite{Sh:c} Theorem VI.3.12 p. 357)}
Suppose $\aleph_0 = \lambda_0 < \lambda_1 < \dots < \lambda_n = \lambda^+$, each $\lambda_i$ is regular,
and for $\ell < n$, $\mu_\ell$ are regular such that $\lambda_{\ell+1} \leq \mu_\ell \leq 2^\lambda$. Then
for some regular $\lambda_1$-good ultrafilter $\de$ on $\lambda$, $\lcf(\kappa, \de) = \mu_\ell$ whenever
$\lambda_\ell \leq \kappa < \lambda_{\ell+1}.$
\end{thm-lit}

These results were generalized by Koppelberg \cite{koppelberg}.

\begin{defn} \emph{(Good for equality, Malliaris \cite{mm4})} \label{good-for-equality}
Let $\de$ be a regular ultrafilter. Say that $\de$ is \emph{good for equality}
if for any set $X \subseteq N = M^I/\de$,
$|X| \leq |I|$, there is $d: X \rightarrow \de$ such that for any $a,b \in X$, $t \in \lambda, t \in d(a) \cap d(b)$ implies that
$(M \models a[t] = b[t]) \iff (N \models a = b)$.
\end{defn}

In the language of homogeneity, Malliaris had shown that the minimum $TP_2$-class is precisely
the class of theories saturated by ultrafilters on $\lambda$ whose ultrapowers
admit an internal bijection between any two disjoint sets of size $\leq \lambda$. (That is, 
for any two enumerations $\langle a_i : i < \lambda \rangle$, $\langle b_i : i < \lambda \rangle$ of 
small sets in the ultrapower $N$, there is an internal bijection $f: N \rightarrow N$ such that $f(a_i) = (b_i)$ for $i<\lambda$. 
There is no harm in replacing 'bijection' with partial injection whose domain contains the desired set.)  
The name \emph{good for equality} reflects that these maps are not assumed to preserve any additional structure. 
So among the non-simple theories, we have on the one hand a class of theories characterized by ultrapowers 
admitting internal partial injections which preserve equality (in the sense of Definition \ref{good-for-equality}), and on the other hand 
a class of theories characterized by ultrapowers admitting internal partial injections which preserve order (in the sense 
of Corollary \ref{bijection}). Towards a possible separation between these classes, 
we investigate the relative strength of these hypotheses. 

\begin{cor} \label{c:txe}
If $\de$ is a regular ultrafilter on $\lambda$ which has $\lambda^+$-treetops, then $\de$ is good for equality. 
\end{cor}

\begin{proof} 
Conclusion \ref{must-be-good} and Theorem \ref{maximal-uf}. 
\end{proof}

\begin{rmk}
If Conjecture \ref{conj:a} is true, then the converse to Corollary \ref{c:txe} is false, i.e. if $\de$ is a regular ultrafilter on $\lambda$ 
which is good for equality then it need not have $\lambda^+$-treetops.
\end{rmk}

\begin{proof}
Malliaris had shown that the Keisler class of the theory $\tfeq$ of a parametrized family of independent equivalence relations may be characterized 
as the class of theories saturated precisely by those regular ultrafilters which are good for equality 
(quoted below as Section \ref{s:tfeq} Theorem \ref{tfeq-m}).  
This theory does not have $SOP_2$. If Conjecture \ref{conj:a} is true, then $\tfeq$ is not in the maximal class, so any ultrafilter able to saturate it 
will be good for equality but not good, so (by Theorem \ref{maximal-uf}) will not have $\lambda^+$-treetops. 
\end{proof}

\begin{cor}[of \ref{bijection}]  \label{r10}
Suppose that $\de$ has $\lambda^+$-treetops, and let $\kappa = \cf(\kappa) \leq \lambda$. Let $M = (\lambda, <)$.
Let $A = \langle a_i : i < \kappa \rangle$ be any strictly increasing, $\kappa$-indexed sequence of $N = M^\lambda/\de$.
Then in $N$ there is an internal partial injection $f$ which takes $\kappa$ to the the diagonal embedding of $\kappa$.
\end{cor}

\begin{cor} \label{true} \emph{(of \ref{r10})}
Suppose that $\de$ has $\lambda^+$-treetops, and let $\kappa = \cf(\kappa) \leq \lambda$. Then every strictly
increasing (or strictly decreasing) $\kappa$-indexed sequence has a distribution, Definition \ref{dist}, which is good for equality.
\end{cor} 

\begin{proof}
The diagonal embedding of $\kappa$ has such a distribution.
\end{proof}

\begin{defn} \emph{(near-$\kappa$-indexed)}
Let $M$ be an infinite model, $\de$ a regular ultrafilter on $\lambda$, $N = M^\lambda/\de$ and
$\kappa$ a regular cardinal.
Say that the set $A \subseteq N$, $|A| = \kappa$ is \emph{near-$\kappa$-indexed}
there exists an internal linear order on $N$
under which $A$ is either monotonic increasing or monotonic decreasing of order-type $\kappa$.
\end{defn}

In the context of ultrapowers, what the proof of Corollary \ref{bijection} actually shows is the following slightly stronger statement:

\begin{obs} \label{obs-x}
If $\de$ is a regular ultrafilter on $\lambda \geq \kappa$
with $\lambda^+$-treetops, then any near-$\kappa$-indexed set $X$ in any $\de$-ultrapower has a distribution which is good for equality.
\end{obs}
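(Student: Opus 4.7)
The plan is to extend Observation \ref{r10} and Corollary \ref{true} to an arbitrary witnessing internal order by producing, via Claim \ref{bijection}, an internal bijection between $X$ and the diagonal embedding of $\kappa$, and then pulling back the canonical good-for-equality distribution that the diagonal carries.

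First I would set up the ambient cofinality spectrum problem. Write $X = \{x_\alpha : \alpha < \kappa\}$, strictly $<^*$-increasing in the internal witness order by reversing if necessary. Following \ref{d:context1}--\ref{up-csp} and using that ultrapowers commute with reducts (\ref{commute-with-reducts}), expand $M$ to $M^+$ interpreting $<^*$ and coding enough set theory, form $M^+_1 = (M^+)^I/\de$, and let $\cs$ be the resulting cofinality spectrum problem with $\Delta$ containing both $<^*$ and an order on a copy of $\kappa$. The treetops hypothesis gives $\xt_\cs \geq \lambda^+$, and Theorem \ref{no-cuts} then yields $\xp_\cs \geq \xt_\cs > \kappa$. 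The hard step is placing $X$ itself inside an element of $\ord(\cs)$, since $<^*$ need not be discrete or pseudofinite; I would handle this essentially as in the sub-claim in the proof of \ref{up-csp2}, by choosing a $\kappa$-regularizing map $e : X \to \de$ (which exists by \ref{regular-filter} since $|X| \leq \lambda$), restricting at each $t \in I$ to the finite $<^*$-ordered set $X_t = \{x_\alpha : t \in e(x_\alpha)\}$, and sending $x_\alpha$ at coordinate $t$ to its position in $X_t$. By \lost theorem this assembles to an internal order-preserving bijection $h = \prod_t h_t/\de$ from $X$ onto $\{y_\alpha : \alpha < \kappa\} \subseteq X_\ma$ for some pseudofinite $\ma \in \ord(\cs)$, with $y_\alpha = h(x_\alpha)$ strictly $<_\ma$-increasing. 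Pick also $\mb \in \ord(\cs)$ whose $X_\mb$ contains the diagonal embedding $\delta : \kappa \to M^+_1$, $\alpha \mapsto [t \mapsto \alpha]_\de$, as a $<_\mb$-increasing $\kappa$-sequence.

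Now Claim \ref{bijection} applied to $\langle y_\alpha : \alpha < \kappa \rangle$ and $\langle \delta(\alpha) : \alpha < \kappa \rangle$ gives a definable monotonic (hence injective) function $g = \prod_t g_t/\de$ in $M^+_1$ with $g(y_\alpha) = \delta(\alpha)$ for every $\alpha$, so the internal composite $f := h^{-1} \circ g^{-1} = \prod_t f_t/\de$ satisfies $f(\delta(\alpha)) = x_\alpha$, and $f_t$ is injective for $t$ in a $\de$-large set $W$. Fix a $\kappa$-regularizing family $\{Y_\alpha : \alpha < \kappa\} \subseteq \de$; by \lost theorem, $Z_\alpha := \{t \in I : \alpha \in \dom(f_t),\ f_t(\alpha) = x_\alpha[t]\} \in \de$ for each $\alpha < \kappa$. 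Define $d : X \to \de$ by $d(x_\alpha) = Y_\alpha \cap Z_\alpha \cap W$. Then the image of $d$ refines $\{Y_\alpha\}$ and is therefore $\kappa$-regularizing, and whenever $t \in d(x_\alpha) \cap d(x_\beta)$ with $\alpha \neq \beta$, the injectivity of $f_t$ gives $x_\alpha[t] = f_t(\alpha) \neq f_t(\beta) = x_\beta[t]$, as required. The main obstacle is really just the CSP embedding step in the previous paragraph; everything after it is short bookkeeping on top of Claim \ref{bijection}.
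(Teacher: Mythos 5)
Your proof is correct and is essentially the paper's intended argument: Observation \ref{obs-x} is stated there as a remark that the proof of Claim \ref{bijection} (together with \ref{r10} and \ref{true}) already yields it, namely internally order-embed the given set into a pseudofinite order by the regularity/\Los argument of \ref{approx-omega}--\ref{up-csp2}, match it against the diagonal embedding of $\kappa$ via \ref{bijection}, and pull back the diagonal's good-for-equality distribution, which is exactly your route. The only difference is bookkeeping: e.g.\ placing the diagonal copy of $\kappa$ inside some $\mb\in\ord(\cs)$ requires the same regularizing trick you already use for $X$, after which your composition and verification go through unchanged.
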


Let $N = M^I/\de$, $|I| = \lambda$ and suppose $\de$ has $\lambda^+$-treetops, $\kappa \leq \lambda$.
Then Claim \ref{m2} shows that the coinitiality of any two $\kappa$-indexed sequences in the ultrapower is the same. Thus if $\mc(\de)$ contains
\emph{some} $(\kappa, \theta)$-cut, \emph{every} monotonic $\kappa$-indexed sequence will represent half of a $(\kappa, \theta)$-cut. This is a strong
omission of types. It does not contradict the universality of regular ultrapowers since elementary embeddings need not preserve cuts.

How strong is the assumption that all sets are near-$\kappa$-indexed? 

\begin{concl} \label{eq-ki} When $\de$ is a regular ultrafilter on $\lambda$, then 
$(1)$ implies $(2)$, where:
\begin{enumerate}
\item $\de$ is good for equality
\item For any infinite model $M$, $N = M^\lambda/\de$, we have that any $A \subseteq N$, $|A| = \kappa \leq \lambda$ is near-$\kappa$-indexed.
\end{enumerate}
If in addition $\de$ has $\lambda^+$-treetops, then $(2)$ implies $(1)$.
\end{concl}

\begin{proof} 
The last line holds by Observation \ref{obs-x}, so we prove $(1)$ implies $(2)$.

Without loss of generality, $M$ is a two sorted structure one side of which contains an infinite set (from which we choose $A$),
the other side of which contains $(\lambda, <)$.
Fix an enumeration $\pi: \kappa \rightarrow A$ of $A$.
Let $K = \langle k_i : i < \kappa \rangle$ be the image of the diagonal embedding of $\kappa$ in $N$,
so $k_i = {^\lambda \{ i \}}$.
Choose a distribution $d_A: A\rightarrow \de$ which is good for equality.
Let $d_\kappa: K \rightarrow \de$ be the distribution given by
$d_\kappa(k_i) = d_A(\pi(i))$, which will be good for equality by definition.
Now simply expand each index model $M[t]$ by adding a linear order $<_*$ to the first sort in such a way that
the existential $<_*$-type of $\{ a[t] : a \in A, t \in d_A(a) \}$ is the same as the existential
$<$-type of $\{ i : t \in d_\kappa(k_i) \}$.

Then in $N$ the order $<_*$ on $A$ will agree with the order $<$ on the true copy of $\kappa$,
as described by the given enumeration.
\end{proof}

Finally, the following property was introduced by Malliaris in \cite{mm-thesis} and studied by Malliaris and by Malliaris and Shelah in  \cite{mm4}, \cite{MiSh:996}.
It is equivalent to $\lambda$-OK, see \cite{MiSh:996} Section 6.

\begin{defn} \emph{(Flexible filters, \cite{mm-thesis})} \label{d:flexible}
We say that the filter $\de$ is $\lambda$-flexible if for any $f \in {^I \mathbb{N}}$ with
$n \in \mathbb{N}$ implies $n <_{\de} f$, we can find $X_\alpha \in \de$ for $\alpha < \lambda$ such that
for all $t \in I$
\[ f(t) \geq | \{ \alpha : t \in X_\alpha \}|\]
Informally, we can find a $\lambda$-regularizing family below any nonstandard integer.
\end{defn}

\begin{fact} \emph{(Malliaris \cite{mm4})} \label{tp2-flex}
Suppose that $\de$ is regular and $T$ is non-low or has $TP_2$. If $\de$ saturates $T$, then $\de$ must be flexible.
\end{fact}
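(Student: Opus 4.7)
The plan is to prove the contrapositive: given $\de$ regular on $I$ with $|I| = \lambda$ and a nonstandard $f \in {^I\mathbb{N}}$ witnessing that $\de$ is not $\lambda$-flexible (so no $\lambda$-indexed regularizing family in $\de$ lies pointwise $\leq f$), we will exhibit a $\vp$-type of size $\lambda$ over a $\de$-ultrapower of $M \models T$ which is finitely satisfiable but cannot be realized, contradicting the assumption that $\de$ saturates $T$.

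In the $TP_2$ case, fix an array $\langle a^i_j : i,j < \omega \rangle$ witnessing $TP_2$ for $\vp(x;y)$ in $M$, with rows $k$-inconsistent and paths consistent. Let $\{Y_\alpha : \alpha < \lambda\} \subseteq \de$ be a regularizing family, $\sigma(t) = \{\alpha : t \in Y_\alpha\}$ (finite), and $r_\alpha(t)$ the rank of $\alpha$ in $\sigma(t)$. In $N = M^I/\de$ we define internal parameters $b_\alpha$ by: fix at each $t$ an internal function $\eta_t : \omega \to \omega$, and set $b_\alpha[t] = a^{r_\alpha(t)}_{\eta_t(r_\alpha(t))}$ when $r_\alpha(t) < f(t)$; otherwise assign $b_\alpha[t]$ to a value on an extension of the path $\eta_t$ (so finite satisfiability is preserved) chosen so that the eventual realization evades it. Then $p := \{\vp(x;b_\alpha) : \alpha < \lambda\}$ is finitely satisfiable in $N$: at each $t$, the finitely many on-path $b_\alpha[t]$'s all lie on $\eta_t$, hence generate a consistent $\vp$-type by $TP_2$. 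By saturation, some $c \in N$ realizes $p$; setting $X_\alpha := \{t : M \models \vp(c[t]; b_\alpha[t])\} \in \de$, the $k$-inconsistency of rows together with the choice of off-path defaults forces $t \in X_\alpha \implies r_\alpha(t) < f(t)$. Since distinct $\alpha \in \sigma(t)$ have distinct ranks, $|\{\alpha : t \in X_\alpha\}| \leq f(t)$, yielding a regularizing family pointwise below $f$ and contradicting the choice of $f$. The non-low case is parallel: a non-low $\vp(x;y)$ admits, for each $k$, parameters in $M$ whose $\vp$-type is $k$-consistent yet inconsistent, and applying \Lost theorem internally to $f$ lets one arrange parameters that are $f(t)$-consistent at index $t$ but whose full $\vp$-type is inconsistent in $N$. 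Distributing $\lambda$ such parameters across the base regularizing family and realizing by saturation yields a regularizing family bounded by $f$, giving the same contradiction.

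The main obstacle is the coordinated combinatorial choice in the $TP_2$ case: the internal paths $\eta_t$ and the off-path values at indices where $r_\alpha(t) \geq f(t)$ must simultaneously guarantee (i) finite satisfiability of $p$, so that saturation applies; (ii) that the $k$-inconsistency of rows propagates to a local bound of $f(t)$ on the support of the realization; and (iii) that each $X_\alpha$ truly belongs to $\de$, so the family is genuinely regularizing. Since $f$ is nonstandard, these choices must be made internally at each index via \Lost theorem, which is precisely what allows $f$ to enter as a local bound even though it is an ultraproduct-level object.
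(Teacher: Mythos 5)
First, note that the paper does not prove this statement at all: it is quoted as a Fact from Malliaris \cite{mm4} (and the surrounding machinery of \cite{mm5} for the $TP_2$ case, via the minimality of $\tfeq$ and ``good for equality''), so there is no in-paper proof to compare against. Your attempt must therefore stand on its own, and it has a genuine gap at its central step.

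The gap is in the $TP_2$ case, precisely at the claim that ``the $k$-inconsistency of rows together with the choice of off-path defaults forces $t \in X_\alpha \implies r_\alpha(t) < f(t)$.'' In your construction every parameter $b_\alpha[t]$ at index $t$ — both those with $r_\alpha(t) < f(t)$ and the overflow ones — is placed on the single path determined by $\eta_t$ (you need this for finite satisfiability). But a path in a $TP_2$-array is a \emph{consistent} set of instances: no two of your assigned parameters at index $t$ share a row, so the $k$-inconsistency of rows never comes into play, and an element of $M$, in particular $c[t]$, can perfectly well satisfy all of them, including those with $r_\alpha(t) \geq f(t)$. Your hedge that the overflow values are ``chosen so that the eventual realization evades it'' is circular: the parameters must be fixed before saturation produces $c$, so you cannot choose them to dodge an element that does not yet exist. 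If instead you push the overflow values off the path (same row as an on-path parameter, different column) to make row-inconsistency bite, then for a fixed pair $\alpha,\beta$ that is almost everywhere in the overflow/clash configuration the two instances are almost everywhere inconsistent, and finite satisfiability of $p$ fails. So either way the construction breaks; the rank-injectivity device $r_\alpha(t)$ is the right tool for controlling collisions, but it does nothing to bound how many instances a single element can satisfy at an index, which is the heart of the matter. What is actually needed is a per-index configuration in which no element satisfies more than roughly $f(t)$ of the assigned instances while fixed finite subsets remain almost-everywhere consistent; a single consistent path can never provide this. (This is why the $TP_2$ case in the literature is routed through $\tfeq$ and ``good for equality'': from a good-for-equality distribution for $\lambda$ distinct elements below $f$ one reads off the regularizing family below $f$ directly.)

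Your non-low sketch has the right shape but glosses over the same quantitative point: ``$k$-consistent yet inconsistent'' only says no element satisfies \emph{all} instances; to bound $|\{\alpha : t \in X_\alpha\}|$ by $f(t)$ you need, at index $t$, a sequence that is $k(t)$-consistent but $n$-inconsistent for some \emph{finite} $n \leq f(t)$, which requires choosing the non-lowness level at $t$ so that its finite inconsistency bound falls below $f(t)$ (possible since $f$ is $\de$-nonstandard and each bound is finite), and then combining this with the rank trick to avoid collisions. Also, saying the full $\vp$-type is ``inconsistent in $N$'' is not what you want — it would contradict finite satisfiability; the type should be finitely satisfiable, and the contradiction comes from the regularizing family below $f$ extracted from any realization.
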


We now complete ``$T$ is non-low or has $TP_2$'' to all non-low or non-simple theories. 
Note that any regular ultrafilter on $\lambda$ which saturates \emph{some} unstable theory must satisfy 
$\lcf(\aleph_0, \de) \geq \lambda^+$ by \cite{Sh:a} VI.4. Alternately, one can derive this condition from Theorem \ref{maximal-uf}. 
Indeed one can also derive Claim \ref{tree-flex} from that theorem, but it is interesting to prove it directly. 

\begin{claim} \label{tree-flex}
If $\de$ is a regular ultrafilter on $\lambda$, $\lcf(\aleph_0, \de) \geq \lambda^+$ and $\de$ has $\lambda^+$-treetops,
then $\de$ is flexible.
\end{claim}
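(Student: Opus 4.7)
I will prove this directly, without routing through Theorem \ref{maximal-uf}. First, associate to $\de$ a cofinality spectrum problem $\cs$ via Claim \ref{up-csp}. By Claim \ref{up-csp3}, the $\lambda^+$-treetops hypothesis gives $\xt_\cs \geq \lambda^+$; combined with Theorem \ref{no-cuts}, this yields $\xp_\cs \geq \lambda^+$ as well. Consequently $\cs$ is $\lambda^+$-$\ord$-saturated by Claim \ref{x16}, and the lcf hypothesis becomes $\lcf(\aleph_0, \cs) \geq \lambda^+$.

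Given a nonstandard $f \in \mathbb{N}^\lambda/\de$, I reformulate flexibility as the existence of a multiplicative refinement of the ``natural'' distribution $d: u \in [\lambda]^{<\omega} \mapsto \{t \in \lambda : f(t) \geq |u|\}$, noting $d(u) \in \de$ because $f$ is nonstandard. Indeed, if $u \mapsto X_u \in \de$ is multiplicative with $X_u \subseteq d(u)$, then setting $X_\alpha := X_{\{\alpha\}}$ one has: whenever $t \in X_{\alpha_1} \cap \cdots \cap X_{\alpha_k} = X_{\{\alpha_1, \ldots, \alpha_k\}}$ with the $\alpha_i$ distinct, $t \in d(\{\alpha_1, \ldots, \alpha_k\})$, so $f(t) \geq k$. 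This is exactly $|\{\alpha : t \in X_\alpha\}| \leq f(t)$, i.e. flexibility.

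I build the refinement by induction on $\alpha < \lambda$, starting from a $\lambda$-regularizing family $\{Y_\alpha : \alpha < \lambda\} \subseteq \de$ supplied by regularity of $\de$. At stage $\alpha$, having constructed $\{X_\beta : \beta < \alpha\}$ with $X_\beta \subseteq Y_\beta$ in $\de$ satisfying the partial multiplicativity condition, I seek $X_\alpha \subseteq Y_\alpha$ in $\de$ with $X_\alpha \cap \bigcap_{\beta \in u} X_\beta \subseteq \{t : f(t) \geq |u|+1\}$ for every $u \in [\alpha]^{<\omega}$. Existence of such $X_\alpha$ is by realizing a partial type of size $< \lambda^+$ over the prior parameters via $\lambda^+$-$\ord$-saturation (Claim \ref{x16}). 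Finite satisfiability: given finitely many prior $X_{\beta_1}, \ldots, X_{\beta_n}$, a witness lies in $Y_\alpha \cap \{t : f(t) \geq k\}$ for any $k$ exceeding the finitely many $|u|+1$ values appearing in the constraints, and this is a $\de$-set since $f$ is nonstandard.

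The main obstacle I anticipate is the encoding of the external condition ``$X_\alpha \in \de$'' as a first-order statement amenable to $\ord$-saturation. I plan to handle this by writing $X_\alpha = Y_\alpha \cap A_\alpha$ with $A_\alpha$ an internal subset of $I$ represented inside some $X_\ma$ for suitable $\ma \in \ord(\cs)$, translating ``$X_\alpha$ is in $\de$'' into a statement about $A_\alpha$ via the pseudofinite internal structure of the regularizing family (using that by regularity each $t$ meets only finitely many $Y_\beta$). The hypothesis $\lcf(\aleph_0, \de) \geq \lambda^+$ enters precisely to coordinate the $\omega$-indexed coherence of the distributions $\{d(u) : u \in [\lambda]^{<\omega}\}$ as $|u|$ varies, ensuring that the countable family of ``$\{t : f(t) \geq k\}$'' constraints is compatible with the pseudofinite internal data at every stage $\alpha < \lambda$.
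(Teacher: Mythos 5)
Your reduction of flexibility to a multiplicative refinement of the map $u \mapsto \{t : f(t) \geq |u|\}$ is fine, but the construction you propose for that refinement has a genuine gap at its central step. At stage $\alpha$ you need a single set $X_\alpha \in \de$ satisfying $|\alpha|$-many constraints of the form $X_\alpha \cap \bigcap_{\beta \in u} X_\beta \subseteq \{t : f(t) \geq |u|+1\}$, and you justify its existence by ``realizing a partial type via $\lambda^+$-$\ord$-saturation.'' But Claim \ref{x16} produces \emph{elements of the ultrapower} $M^+_1$ realizing finitely satisfiable conditions; it says nothing about producing \emph{subsets of the index set} lying in $\de$. The sets $X_\beta$ and the desired $X_\alpha$ are external objects (members of $\de \subseteq \mcp(\lambda)$), not parameters or realizations of formulas in $M^+_1$, and there is no compactness or saturation principle for the external algebra $\mcp(\lambda)$ that upgrades your finite-satisfiability check to a simultaneous solution of $\lambda$ many constraints: a regular ultrafilter is maximally incomplete, so an intersection of $\lambda$ many $\de$-sets need not even be nonempty. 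Your proposed fix --- writing $X_\alpha = Y_\alpha \cap A_\alpha$ with ``$A_\alpha$ an internal subset of $I$'' --- is not meaningful: internal sets in the sense of Definition \ref{d:internal} are subsets of the ultrapower induced coordinatewise, not subsets of the index set, and the ultrapower cannot speak about which indices belong to which $X_\beta$. In effect the step you leave open is exactly a goodness-type statement, which is why the paper remarks that \ref{tree-flex} follows from Theorem \ref{maximal-uf}; since you explicitly decline that route, the argument as given is circular/unsupported. The role you assign to $\lcf(\aleph_0,\de) \geq \lambda^+$ (``coordinating $\omega$-indexed coherence'') is also never made precise.

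For contrast, the paper's direct proof avoids all of this: given a nonstandard $n_*$, the hypothesis $\lcf(\aleph_0,\de) \geq \lambda^+$ yields a strictly decreasing $\lambda$-indexed sequence $\langle b_i : i < \lambda \rangle$ of nonstandard elements below $n_*$; then the treetops consequences Claim \ref{bijection} and Corollary \ref{true} (an internal bijection with the diagonal copy of $\lambda$, i.e.\ a distribution which is good for equality) give a distribution $d$ under which distinct $b_i$ have distinct projections at every relevant index. Setting $X_i = d(b_i)$, at each index $t$ the elements $\{ b_i[t] : t \in X_i \}$ are pairwise distinct and below $n_*[t]$, so $|\{ i : t \in X_i \}| \leq n_*[t]$, i.e.\ the regularizing family sits below $n_*$. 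The external condition ``$X_i \in \de$'' is automatic there because the $X_i$ arise as a distribution of a type, via \Los's theorem --- precisely the mechanism missing from your construction. To repair your write-up you would either need to reorganize the induction so that every set placed in $\de$ is obtained from internal data about elements of the ultrapower (which essentially leads you back to the paper's bijection argument), or simply invoke Theorem \ref{maximal-uf} and the fact that good ultrafilters are flexible.
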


\begin{proof}
Let $M = (\lambda, <)$ and let $N = (\lambda, <)^I/\de$.
Let some $\de$-nonstandard integer $n_*$ be given. We would like to show that there is a regularizing family below $n_*$.
By hypothesis, $\lcf(\aleph_0,\de) \geq \lambda^+$ so there is $B \subseteq N \setminus \mathbb{N}$, $B = \langle b_i : i < \lambda \rangle$
such that $i<j<\lambda$, $m<\omega$ implies $m < b_j < b_i < n_*$.
By Corollary \ref{bijection} and Corollary \ref{true}, there is a distribution $d$ of $B$ which is good for equality,
that is, for all $b, b^\prime \in B$, and all $t \in I$,
\[ t \in d(b) \cap d(b^\prime) \mbox{ implies } \left( M \models b[t] = b^\prime[t] \iff N \models b = b^\prime \right) \]
Let $\{ X_b : b \in B \} \subseteq \de$ be given by $X_b = d(b)$.
By choice of $B$ and goodness for equality, $\{ X_b : b \in B \}$ is a regularizing family and by choice of $d$, it is below $n_*$, which completes
the proof.
\end{proof}

\begin{concl} \label{flex-not-simple} \label{concl-flex}
Suppose $\de$ is a regular ultrafilter on $\lambda$ and $\de$ saturates some non-low or non-simple theory. Then $\de$ is flexible.
\end{concl}

\begin{proof}
Suppose $\de$ saturates some theory $T$ which is not low or not simple. If $T$ is not low, apply Fact \ref{tp2-flex}. So we may assume $T$ is not simple. 
We know from \cite{Sh:a} Theorem III.7.11 that any non-simple theory will have either $TP_2$ or $SOP_2$. 
In the case where $T$ has $TP_2$, Fact \ref{tp2-flex} applies again. Suppose then that $T$ has $SOP_2$.  
By Corollary \ref{sop2-treetops}, $\de$ has $\lambda^+$-treetops, so by Theorem \ref{maximal-x} we know that $\lcf(\aleph_0, \de) \geq \lambda^+$. 
Then the hypotheses of Claim \ref{tree-flex} are satisfied, and $\de$ is flexible. 
\end{proof}

\section{There is a minimum non-simple class in Keisler's order} \label{s:min-non-simple} \label{s:tfeq}
The main result of this section is:

\begin{theorem} \label{t:tfeq}
There is minimum class among the non-simple theories in Keisler's order, which
contains the theory ~$\tfeq$~ of a parametrized family of independent equivalence relations.
\end{theorem}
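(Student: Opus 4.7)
The plan is to assemble three ingredients, two imported and one proved above. First, by Shelah's trichotomy for non-simple theories (\cite{Sh:a} Theorem III.7.11, already noted in the introduction and at the opening of this section), every non-simple countable complete first-order theory $T$ has either $TP_2$ or $SOP_2$ (possibly both). Second, by Malliaris \cite{mm4}-\cite{mm5} (invoked in the paper as Theorem \ref{tfeq-m}), the theory $\tfeq$ is the Keisler-minimum among theories with $TP_2$, and in particular $\tfeq$ is itself non-simple since it exhibits $TP_2$. Third, by the maximality result Theorem \ref{concl:sop2-max} proved above, any theory with $SOP_2$ sits in the Keisler-maximum class.

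Given these three inputs the argument is immediate. Let $T$ be an arbitrary non-simple countable complete theory. If $T$ has $TP_2$, then $\tfeq \trianglelefteq T$ by Malliaris's minimality result. If instead $T$ has $SOP_2$, then $T$ is Keisler-maximal by Theorem \ref{concl:sop2-max}, so \emph{a fortiori} $\tfeq \trianglelefteq T$ (every theory is below the maximum). In either case $\tfeq \trianglelefteq T$, and since $\tfeq$ is itself non-simple this shows that the Keisler class of $\tfeq$ is the minimum class among non-simple theories.

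The only genuinely new input is the $SOP_2$ case, so the main work has already been done: it is packaged into Theorem \ref{concl:sop2-max}, which in turn rests on the cofinality-spectrum machinery culminating in Theorem \ref{no-cuts} together with the characterization of $\lambda^+$-treetops in terms of realization of $SOP_2$-types (Conclusion \ref{sop2-concl}). The $TP_2$ half is imported wholesale from \cite{mm4}-\cite{mm5}; no additional combinatorics is required. The one small point worth spelling out in the write-up is that Shelah's trichotomy genuinely covers both cases disjointly enough for the above case-split: a theory may have both $TP_2$ and $SOP_2$, but since the conclusion $\tfeq \trianglelefteq T$ is obtained in each case separately, overlap causes no difficulty.
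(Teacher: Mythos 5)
Your proof is correct, and it is in fact the first of the two routes the paper itself sketches in Discussion \ref{d:24}: collapse the $SOP_2$ case by maximality. The proof the paper actually writes out takes the other route: in the $SOP_2$ case it never invokes Theorem \ref{concl:sop2-max}, but argues directly that the ultrafilter saturates $\tfeq$ --- since $T_*$ is non-simple, hence unstable, $\de$ saturates the random graph (Fact \ref{rg-good}) and so has $2$-separation; since $T_*$ has $SOP_2$, $\de$ has $\lambda^+$-treetops (Conclusion \ref{sop2-concl}); then Corollary \ref{c:tfeq-c}, resting on the ``crucial claim'' \ref{c:tfeq} (a $\lambda$-complete-ideal argument that manufactures an internal bijection matching two given $\lambda$-sequences, using Reduction \ref{z10} to reduce to the case $\xp_\cs = \kappa^+ = \lambda < \xt_\cs$), gives saturation of $\tfeq$ via Malliaris's ``good for equality'' characterization, Theorem \ref{tfeq-m}(3). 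Your route is shorter, but it leans on the full strength of Theorem \ref{concl:sop2-max} (hence the entire cut analysis culminating in Theorem \ref{no-cuts}) just to conclude $\tfeq \trianglelefteq T$ in the $SOP_2$ case; the paper's direct argument isolates the weaker hypotheses that actually suffice ($2$-separation plus $\lambda^+$-treetops already force saturation of $\tfeq$), records what was historically the turning point of the whole project (Reduction \ref{z10}), and, as the paper notes, extends to cofinality spectrum problems with endless $X_\ma$ beyond the regular-ultrapower setting. One small point in your write-up is fine as stated: in the $SOP_2$ case, maximality of $T$ means any $\de$ saturating it is $\lambda^+$-good, and good regular ultrafilters saturate every countable theory, so indeed $\tfeq \trianglelefteq T$; and the possible overlap of $TP_2$ and $SOP_2$ is harmless in both versions, exactly as you observe.
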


We will build on the following theorems.
(The reader may take the property $TP_2$, the tree property of the second kind, to be a black box in the following results.)

\begin{thm-lit} \emph{(Shelah \cite{Sh:a} III.7.11, in our language)} \label{tp2-sop2}
Every non-simple theory has either $TP_2$ or $SOP_2$ $($equivalently, $TP_1$$)$.
\end{thm-lit}

\begin{thm-lit} \emph{(Malliaris \cite{mm4} Theorems 6.9--6.10, and Malliaris \cite{mm5} Theorem 5.21)} \label{tfeq-m}
There is minimum class among the theories with $TP_2$ in Keisler's order, which
contains the theory $\tfeq$ of a parametrized family of independent equivalence relations.

Moreover, for a regular ultrafilter $\de$
on $\lambda$, the following are equivalent:
\begin{enumerate}
\item $\de$ saturates $\tfeq$ 
\item $\de$ is good for equality, Definition $\ref{good-for-equality}$ above
\item for any $N = M^\lambda/\de$ and any two disjoint sets $\{ a_i : i < \lambda \}$, $\{ b_i : i < \lambda \}$
of elements of $N$, listed without repetition, there exists an internal partial injection $f$ such that
for all $i< \lambda$, $f(a_i) = b_i$.
\end{enumerate}
\end{thm-lit}

The tools of the present paper will combine to give two distinct proofs for Theorem \ref{t:tfeq}. 
First, in light of \ref{concl:sop2-max}, i.e. the maximality of $SOP_2$ in Keisler's order,
Theorem \ref{t:tfeq} follows from Theorem \ref{tfeq-m} and Theorem \ref{tp2-sop2} by virtue of collapsing the $SOP_2$ case.

However, it is possible to give a direct and more illuminating proof of this theorem, as we do below. 
The direct proof is not specific to ultrapowers, but holds of any cofinality spectrum problem
allowing an endless $X_\ma$, i.e. one in which there is no bound $d_\ma$ $($which extends the context of regular ultrapowers$)$. 
The proof itself is deferred until after Claim \ref{c:tfeq-c}.  It supposes a
reduction \ref{z10}, an example of the phenomenon of reduction to few asymmetric cuts.
Historically, this reduction was the turning point of our argument. Since
the reduction is now trivially true by the umbrella Theorem \ref{last-cut}, we do not give a separate proof of \ref{z10}.

\begin{red} \label{z10}
If $\cs$ is a cofinality spectrum problem, to show that $\mc(\cs, \xt_\cs) \neq \emptyset$,
it suffices to rule out the case of a $(\kappa, \kappa^+)$-cut where $\kappa^+ = \xp_\cs$. 
$[$See Discussion \ref{big-picture}.$]$ 
\end{red}

\begin{cor} \label{m-cor} \emph{(of Theorem \ref{tfeq-m})}
If we consider the regular ultrapower $M_1 = M^\lambda/\de$ as a cofinality spectrum problem
$(M, M_1, \dots)$ then to show that $\de$ saturates $\tfeq$ it suffices to verify
Theorem \ref{tfeq-m}(3) in the case where the sets are taken from $X_\ma$ for
\emph{some} pseudofinite $\ma \in \ord(\cs)$.
\end{cor}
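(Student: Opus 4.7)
The plan is to reduce the general case of Theorem \ref{tfeq-m}(3) to the pseudofinite case asserted by the hypothesis. Given arbitrary disjoint enumerations $A = \{a_i : i < \lambda\}$ and $B = \{b_i : i < \lambda\}$ in $N = M^\lambda/\de$ without repetition, I would use regularity of $\de$ to internally transfer both sets into a pseudofinite $X_\ma$, apply the hypothesis there to obtain an internal bijection $g$, and then pull $g$ back to $N$.

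Concretely, first fix by regularity a distribution $d : A \cup B \to \de$ whose image is a $\lambda$-regularizing family; then for $\de$-almost every index $t$ the set $Z_t := \{z[t] : z \in A \cup B,~ t \in d(z)\}$ is finite. Next, following the Sub-claim of Claim \ref{up-csp2} together with Observation \ref{approx-omega}, choose a pseudofinite $\ma \in \ord(\cs)$ with $X_\ma = \prod_t [0,n_t)/\de$ where $n_t > |Z_t|$ on a $\de$-large set, and at each such $t$ define an internal function $f_t : M \to [0,n_t)$ whose restriction to $Z_t$ is injective. Setting $f = \prod_t f_t/\de$, for distinct $z,w \in A \cup B$ the set $\{t \in d(z) \cap d(w) : z[t] \neq w[t]\}$ belongs to $\de$, and on it $f_t$ separates $z[t]$ from $w[t]$; hence $\{f(a_i) : i < \lambda\}$ and $\{f(b_i) : i < \lambda\}$ are disjoint subsets of $X_\ma$ enumerated without repetition. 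Applying the hypothesis to these pseudofinite sets yields an internal bijection $g$ with $g(f(a_i)) = f(b_i)$ for all $i$. Finally, at each index $t$ extend the well-defined finite partial bijection $f_t^{-1} \circ g_t \circ f_t$ on $Z_t$ to a bijection $h_t : M \to M$; the ultraproduct $h := \prod_t h_t/\de$ is an internal bijection of $N$, and on the $\de$-large set where $t \in d(a_i) \cap d(b_i)$ one has $h_t(a_i[t]) = f_t^{-1}(f_t(b_i[t])) = b_i[t]$, so $h(a_i) = b_i$, which verifies Theorem \ref{tfeq-m}(3).

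The main obstacle will be ensuring that this $f$ — defined index-by-index by functions injective only on the finite sets $Z_t$ — truly restricts to an injection on $A \cup B$ at the level of $N$. This is the step where the choice of distribution has to interact precisely with regularity: distinct elements of $A \cup B$ must differ on a $\de$-large subset of the intersection of their distribution sets, so that the index-level injectivity of $f_t$ on $Z_t$ produces global injectivity in $N$. Once this is established, the remaining verifications — internality and bijectivity of $h$, and the identity $h(a_i) = b_i$ — are routine index-model checks, all of the combinatorial content having been absorbed into the pseudofinite hypothesis.
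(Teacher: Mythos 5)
Your argument is correct, but it is organized differently from the paper's. The paper disposes of Corollary \ref{m-cor} in two lines: since $\de$ is regular and ultrapowers commute with reducts, saturation of $\tfeq$ (equivalently, condition (3) of Theorem \ref{tfeq-m}) need only be checked for one convenient model, say $M=(\mathbb{N},<)$; and in $(\mathbb{N},<)^\lambda/\de$ every set of $\leq\lambda$ elements is bounded (take a distribution and the index-wise maximum), hence already lies inside a pseudofinite interval $X_\ma$, so the restricted verification is literally the hypothesis needed to invoke Theorem \ref{tfeq-m}. You instead keep $M$ arbitrary and carry out the model-transfer step explicitly: compress $A\cup B$ index-by-index into a pseudofinite $X_\ma$ via a distribution and injections $f_t$ on the finite traces $Z_t$, apply the pseudofinite hypothesis there, and pull the bijection back by extending the finite partial injections $f_t^{-1}\circ g_t\circ f_t$ to bijections of the index model. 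This is the same device as Observation \ref{approx-omega} and the Sub-claim inside Claim \ref{up-csp2}, which you cite, and all the Łoś-theorem checks you list do go through (injectivity of $f$ on $A\cup B$ from the fact that distinct elements of the ultrapower differ $\de$-almost everywhere on the intersection of their distribution sets; $\de$-a.e. injectivity of $g_t$ so the pulled-back partial maps are partial bijections). What your route buys is self-containedness — you never need to argue separately that condition (3) transfers between models — at the cost of being longer than the paper's citation-style proof.

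One caveat worth noting: your construction uses the hypothesis for an $\ma$ chosen after seeing the sets (you need $n_t>|Z_t|$), so what you actually use is the bijection property for sets lying in pseudofinite orders, with $\ma$ allowed to depend on the sets. That is exactly how the corollary is applied later (Claim \ref{c:tfeq} supplies the bijections uniformly for the relevant $\ma$, and Corollary \ref{c:tfeq-c} invokes it that way), so this matches the intended reading of ``\emph{some} pseudofinite $\ma$''; but if one insisted on a single $\ma$ fixed in advance, squeezing arbitrary $Z_t$ below the fixed $n_t$ would require something like flexibility of $\de$, and neither your argument nor the paper's addresses that stricter reading.
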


\begin{proof}
In Theorem \ref{tfeq-m}, by regularity of $\de$ and the fact that ultrapowers commute with reducts there
is no harm in assuming $M = (\mathbb{N}, <)$, so $N = M^\lambda/\de$, in which case
this is precisely what is proved.
\end{proof}

Recall also that by Fact \ref{rg-good}, the theory $\trg$ of the random graph is minimum in Keisler's order
among the unstable theories, and $\de$ saturates $\trg$ if and only if it has so-called 2-separation
(disjoint sets of size $\leq \lambda$ in the ultrapower can be separated by an internal set, see \ref{rg-good}).
For an arbitrary c.s.p. this means:

\begin{defn} \emph{($2$-separation for $\cs$)} \label{2-sep}
Let $\cs$ be a cofinality spectrum problem. We say that
\emph{$\cs$ has $2$-separation} if for any $\ma \in \ord(\cs)$ and any two disjoint sets
$A, B \subseteq X_\ma$ with $|A| + |B| \leq \xp_\cs$, there is a definable $X \in M_1$ such that
$A \subseteq X$ and $B \cap X = \emptyset$.
\end{defn}

\begin{lemma} 
\label{c:tfeq}
Let $\cs$ be a cofinality spectrum problem and suppose that:
\begin{enumerate}
\item $\cs$ has $2$-separation
\item $\kappa^+ = \lambda = \xp_\cs < \xt_\cs$
\item $\ma \in \ord(\cs)$ is endless, i.e. there is no bound $d_\ma$  
\item $\{ d_\epsilon : \epsilon < \lambda \}$, $\{ e_\epsilon : \epsilon < \lambda \}$ are
disjoint subsets of $X_\ma$, listed without repetition
\end{enumerate}
Then there is in $M_1$ a bijection $f: X_\ma \rightarrow X_\ma$ such that
$f(d_\epsilon) = e_\epsilon$ for all $\epsilon < \lambda$.
\end{lemma}

\begin{proof} 
We begin with $\ma \in \ord(\cs)$, $\{ d_\epsilon : \epsilon < \lambda \}$,
$\{ e_\epsilon : \epsilon < \lambda \}$ be as given. We define
\begin{align*}
\mci = \{ S \subseteq \lambda ~&: ~\mbox{there is $f$ such that $M_1 \models$ ``$f$ is a 1-1 function from $X_\ma$ to $X_\ma$''} \\
& \mbox{and} ~ \epsilon \in S_1 \iff M_1 \models f(d_\epsilon) = e_\epsilon \}  
\end{align*}

First, we verify that if $M_1 \models$ ``$f$ is a 1-1 function with $\dom(f) \subseteq X_\ma$ and $\rn(f) \subseteq X_\ma$'' then
\[ \{ \epsilon < \lambda : f(d_\epsilon) = e_\epsilon \} \in \mci \]
Equivalently, this holds if $f \in M_1$ is a 1-1 function on some $M_1$-definable subset of $X_\ma$.

We can define from $f$ a total function $g$ as follows. Necessarily the domain and range of $f$ are definable sets. Let
$g: X_\ma \rightarrow X_\ma$ be defined by:

\[ g(a) =
	 \begin{cases}
	 f(a) & \text{if } ~a \in \dom(f) \\
	f^{-1}(a)	& \text{if } ~a \in \rn(f) \setminus \dom(f)\\
	a& \text{if } ~a \in X_\ma \setminus \dom(f) \setminus \rn(f) \\
		 \end{cases}\]

Second, we prove that $\mci$ is an ideal on $\lambda$, by checking the conditions for an ideal. In light of the above it suffices to specify bijections
on definable subsets of $X_\ma$.

(a) $\mci \subseteq \mcp(\lambda)$ by definition.

(b) $\emptyset \in \mci$ trivially.

(c) Suppose $S_1 \subseteq S_2 \subseteq \lambda$ and $S_2 \in I$, exemplified by $f_2$.
By $2$-separation, there is a set $X \in M_1$ such that $M_1 \models X \subseteq X_\ma$ and
for all $\epsilon < \lambda$,
\[ \left( M_1 \models d_\epsilon \in X \right) \iff \left( \epsilon \in S \right) \]
Let $f_1 = f_2 \rstr_Y$ where $Y = \dom(f_2) \cap X$. Then $f_1$ witnesses that $S_1 \in \mci$.
So $\mci$ is downward closed.

(d) If $S_1, S_2 \in \mci$ then by (c) the set $S^\prime_2 = S_2 \setminus S_1$ belongs to $\mci$.
Let $f_1, f^\prime_2$ witness that $S_1, S^\prime_2$ belong to $\mci$, respectively.
Define $f_2 \in M_1$ by: $\dom(f) = \dom(f_1) \cup \dom(f^\prime_2)$, and
\[ f_2(a) =
\begin{cases}
		f_1(a) & \text{if } ~a \in \dom(f_1) \\
		f^\prime_2(a)	& \text{if } ~a \in \dom(f_2) \setminus \dom(f_1)
\end{cases}\]
Iterating, we have that $\mci$ is closed under finite union.

It follows from (a)-(d) that $\mci$ is an ideal on $\lambda$.

Third, we prove closure for $\cf(\delta) < \kappa$. 
Let $\mb \in \ord(\cs)$ be such that $X_\mb = X_\ma \times X_\ma$.
Let $(\mct, \tlf)$ be the subset of $\mct_\mb$ consisting of elements $c$ whose range
is the graph of a partial injection from $X_\ma$ to $X_\ma$, i.e.

\begin{itemize}
\item for each $n < \lgn(c)$, $\tc(n) \in X_\ma \times X_\ma$; as before, we denote these values by
$\tc(n,0)$ and $\tc(n,1)$ respectively
\item the set $\{ \tc(n) : n \leq \maxdom(c)) \}$ is the graph of a partial injection from $X_\ma$ to $X_\ma$
\item if $\tc_1 \tlf \tc_2$ then $\tc_2$ extends $\tc_1$, considered as a function
\end{itemize}

\br
\noindent \emph{Note that 
for elements $\tc \in \mct$, we will refer extensively to the
function whose graph is $\{ (\tc(n,0), \tc(n,1)) : n \leq \maxdom(\tc)) \}$. For purposes of clarity, we denote this
function by $\fun(\tc)$ and will say e.g. that $d_\epsilon \in \fun(\tc)$ or that $\fun(\tc)(d_\epsilon) = e_\epsilon$.}

\begin{scl} Suppose that $\delta$ is an ordinal with $\cf(\delta) < \kappa$ and we are given $\tc_\alpha, n_\alpha, S_\alpha$
($\alpha < \delta$) and a set $S$, such that:
\begin{itemize}
\item for each $\alpha < \delta$, $M_1 \models \tc_\alpha \in \mct$
\item $M_1 \models \tc_\beta \tlf \tc_\alpha$ for each $\beta < \alpha < \delta$
\item for each $\alpha < \delta$, $n_\alpha = \lgn(\tc_\alpha) - 1$
\item for each $\alpha < \delta$, $S_\alpha = \{ \epsilon < \lambda : d_\epsilon \in \dom(\fun(\tc_\alpha)) \}$, i.e.
$ = \{ \epsilon < \lambda : (\exists n \leq n_\alpha) (\tc_\alpha(n,0) = d_\epsilon) \}$
\item for each $\alpha < \delta$, $\epsilon \in S_\alpha$ implies $\fun(\tc_\alpha)(d_\epsilon) = e_\epsilon$
\item $S = \bigcup \{ S_\alpha : \alpha < \delta \}$
\end{itemize}
\emph{Then} there is $\tc_*$ such that:
\begin{enumerate}
\item $M_1 \models \tc_* \in \mct$
\item $M_1 \models \tc_\alpha \tlf \tc_*$ for $\alpha < \delta$
\item for $\epsilon < \lambda$,
\[d_\epsilon \in \dom( \fun(\tc_*) ) \iff\epsilon \in S \iff \fun(\tc_*)(d_\epsilon) = e_\epsilon\]
\end{enumerate}
In particular, $\fun(\tc_*)$ witnesses that $S \in \mci$.
\end{scl}

\begin{proof}
Since $\cf(\delta) < \kappa < \xt_\cs$, by Treetops there is $\tc \in \mct$ satisfying (1)-(2).
Now the sequence $\langle n_\alpha : \alpha < \delta \rangle$ represents the left half of some cut $(C_1, C_2)$ of $\dom(\tc)$.

Let $\langle m_\beta : \beta < \theta \rangle$ represent the right half of this cut.

By Reduction \ref{z10}, $\cf(\delta) < \kappa < \kappa^+ = \lambda$ implies $\lcf(\delta) > \lambda$,
as otherwise there would be a corresponding cut in $\tc(\de)$. Thus, $\theta > \lambda$.

Now for each $\epsilon \in \lambda \setminus S$ there is $\beta(\epsilon) < \theta$ such that $d_\epsilon \notin \dom(\tc(m_\beta)$.
Let $\beta = \sup \{ \beta(\epsilon) : \epsilon \in \lambda \setminus S \}$. Since $\theta > \lambda$, $\beta > \alpha$ for all
$\alpha < \delta$. Then $\tc\rstr m_\beta$ is the desired $\tc_*$.
\end{proof}
This completes the proof of closure. 

Fourth, we prove that the ideal $\mci$ is $\lambda$-complete, and contains each $\{ \alpha \}$ for $\alpha \in \lambda$. 
The claim about containing the singletons is trivial since we know it is an ideal on $\lambda$. So let us show that $\mci$ is $\lambda$-complete.

Let $\langle S_\alpha : \alpha < \delta \rangle$ be an increasing sequence of elements of $\mci$, with $\delta < \lambda$.
Without loss of generality $\delta = \cf(\delta)$; call it $\theta$. So $\theta \leq \kappa$.
By induction on $\alpha < \kappa$ we choose a sequence $\langle \tc_\alpha : \alpha < \kappa \rangle$ just as before.
The subclaim proved in above says precisely that we can continue the induction for all $\alpha < \kappa$, and we now
address the case of $\kappa$, i.e. $\theta$.

That is, having chosen $\langle \tc_\alpha : \alpha < \theta \rangle$, as $\kappa < \lambda \leq \xt_\cs$ we have $\kappa^+$-treetops
so may choose an upper bound $\tc$. Let $n = \maxdom(\tc))$, and by definition $\fun(\tc)$ is a $1$-$1$ function.

Thus by definition $\fun(\tc)$ witnesses that $S \in \mci$ where $S = \{ \epsilon : \fun(\tc)(d_\epsilon) = e_\epsilon \}$.
By choice of $\tc$ as an upper bound, $\bigcup \{ S_\alpha : \alpha < \theta \} \subseteq S \in \mci$,
and thus necessarily $\bigcup \{ S_\alpha : \alpha < \theta \} \in \mci$ by (c) above. This finishes the proof that the ideal is $\lambda$-complete.

Fifth, we prove that $\kappa^+ = \lambda$ is the union of $\kappa$ sets from $I$. 
By induction on $\alpha < \lambda$ we choose $(\tc_\alpha, n_\alpha)$ such that:

\begin{itemize}
\item $\tc_\alpha \in \mct$, i.e. it represents an increasing pseudofinite sequence of 1-1 functions
from $X_\ma$ to $X_\ma$
\item $n_\alpha = \lgn(\tc) - 1$
\item $\beta < \alpha$ implies $\tc_\beta \tlf \tc_\alpha$
\item if $\epsilon < \beta < \alpha$ and $n \in (n_\epsilon, n_\beta]$ then $d_\epsilon \in \dom(\fun({\tc_\alpha\rstr_n})$ and
$\fun({\tc_\alpha\rstr_n})(d_\epsilon) = e_\epsilon$
\end{itemize}

The induction is straightforward. For $\alpha=0$, let $\tc_\alpha = \emptyset$.

For $\alpha = \beta + 1$, let $n_{\alpha + 1} = n_\beta + 1$, and $\tc_\alpha$ is determined by asking that
$\dom(\fun(\tc_\alpha)) = \dom(\fun(\tc_\beta)) \cup \{ d_\beta \}$, that $d \in \dom(\fun(\tc_\beta))$ implies
$\fun(c_\alpha)(d) = \fun(c_\beta)(d)$, and that $\fun(c_\alpha)(d_\beta) = e_\beta$.

For $\alpha = \delta < \lambda$ limit, let $\tc$ be a $\tlf$-upper bound given by treetops. Now for each
$\epsilon < \alpha$ we define
\[ k^\alpha_\epsilon = \max \{ n: n_{\epsilon + 1} \leq n \leq \dom(\tc) ~ \mbox{and}~ \fun(\tc\rstr_n)(d_\epsilon)=e_\epsilon \} \]
Thus for all $\epsilon < \alpha$ and all $\beta < \alpha$, $k^\alpha_\epsilon > n_\beta$, while for fixed $\alpha$
and increasing $\epsilon < \alpha$ the $k^\alpha_\epsilon$ form a descending sequence. In other words,
$( \langle n_\beta : \beta < \alpha \rangle, \langle k^\alpha_\epsilon : \epsilon < \alpha \rangle)$ represent a pre-cut
in $X_\mb$. However, $\lcf(\cf(\alpha), \cs) \geq \lambda$ so necessarily it is a pre-cut and not a cut; we may fin
$k_\epsilon$ such that for all $\epsilon < \alpha$ and all $\beta < \alpha$, $n_\beta < k_\epsilon < k^\alpha_\epsilon$.
Let $\tc_\alpha = \tc \rstr_{k_\epsilon}$. This completes the inductive construction of the sequence.

\br

Thus $\langle (\tc_\alpha, n_\alpha) : \alpha < \lambda \rangle$ is well defined. As we assumed $\xt_\cs > \lambda$, we have
$\lambda^+$-treetops so we may choose $c$ to be an upper bound in $\mct$ for $\langle \tc_\alpha : \alpha < \lambda$.
For each $\epsilon < \lambda$, let $k_\epsilon$ be as given in the previous paragraph.

By definition $\langle n_\alpha : \alpha < \lambda \rangle$ is an increasing sequence in $X_\mb$.
By assumption in the statement of the Claim, we are in the case
where $\xp_\cs < \xt_\cs$ \emph{thus} by
Reduction \ref{z10} $\lcf(\lambda) = \kappa = \lambda^{-}$. Thus for some
$\langle n^*_i : i < \kappa \rangle$ we have that $(\langle n_\alpha : \alpha < \lambda \rangle, \langle n^*_i : i < \kappa \rangle)$
represents a cut in $X_\mb$. For each $i < \kappa$, let
\[ Y_i = \{ \epsilon < \lambda : n^*_i < k_\epsilon\} \]
Now, each $Y_i \in \mci$, since this is witnessed by $\tc\rstr_{n^*_i}$ by what we have shown.  
Moreover, $\{ Y_i : i < \kappa \}$ is an increasing sequence of subsets of $\lambda$ whose union is $\lambda$.
So we have presented $\lambda$ as the union of $\kappa$ elements of $\mci$. 

Having proved each of these properties, we may therefore assume $\lambda$ is the union of $\kappa$ sets from $I$, and that
the ideal $\mci$ is $\lambda$-complete. Thus $\lambda \in \mci$. As $\lambda \in \mci$ shows the existence of the desired bijection, this completes the proof.
\end{proof}

\begin{cor} \label{c:tfeq-c}
Let $\de$ be a regular ultrafilter on $\lambda$ which $\lambda^+$-saturates the theory of the random graph.
If $\de$ has $\lambda^+$-treetops then $\de$ $\lambda^+$-saturates $\tfeq$.
\end{cor}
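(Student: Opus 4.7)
The plan is to reduce the claim to producing internal bijections between disjoint $\lambda$-sized sets and then to invoke Claim \ref{c:tfeq} (or essentially its proof) in the cofinality spectrum problem $\cs$ naturally associated to $\de$ via Claim \ref{up-csp}. By Theorem \ref{tfeq-m}, it suffices to show that $\de$ is good for equality, i.e. that whenever $N = M^\lambda/\de$ and $\{a_i : i < \lambda\}$, $\{b_i : i < \lambda\}$ are disjoint sets listed without repetition, there is an internal bijection $f$ with $f(a_i) = b_i$ for all $i$. By Corollary \ref{m-cor}, we may assume both sets lie inside $X_\ma$ for some pseudofinite, endless $\ma \in \ord(\cs)$, namely the $\ma$ given by \ref{up-csp}(2).

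Next I would verify the hypotheses of Claim \ref{c:tfeq} (or the relevant alternative). 2-separation of $\cs$ in the sense of Definition \ref{2-sep} holds because $\de$ $\lambda^+$-saturates the random graph, by Fact \ref{rg-good} (translated into the c.s.p. setting exactly as in the proof of Conclusion \ref{up-concl}). The treetops assumption yields $\xt_\cs \geq \lambda^+$ via Claim \ref{up-csp3}. The endlessness of $\ma$ comes for free from the construction in \ref{up-csp}. The only literal hypothesis of \ref{c:tfeq} that is not immediate is $\kappa^+ = \lambda = \xp_\cs$; but by Theorem \ref{no-cuts}, $\mc(\cs, \xt_\cs) = \emptyset$, so in fact $\xp_\cs \geq \xt_\cs \geq \lambda^+$, and the limit-stage trimming in the proof of \ref{c:tfeq} becomes \emph{easier} in our regime, not harder, since there are no obstructive cuts of cofinality $\leq \lambda$ to accommodate.

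Concretely, I would run the construction from Step 5 of \ref{c:tfeq}: work in the definable subtree $\mct$ of $\mct_\mb$ (for $\mb$ with $X_\mb = X_\ma \times X_\ma$) whose nodes code pseudofinite partial injections $X_\ma \to X_\ma$, and by induction on $\alpha < \lambda$ choose $(\tc_\alpha, n_\alpha)$ so that $\tc_\alpha \in \mct$, $\beta < \alpha \implies \tc_\beta \tlf \tc_\alpha$, $\tc_\alpha$ is below the ceiling, and $\fun(\tc_\alpha)(d_\epsilon) = e_\epsilon$ for all $\epsilon < \alpha$. Successor stages just append the pair $(d_\beta, e_\beta)$. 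At limits $\delta < \lambda$, $\lambda^+$-treetops (via Observation \ref{t-def}) yields an upper bound $\tc$, and the trimming to $\tc\rstr_{k_*}$ that preserves $\fun(\tc_\alpha\rstr_{k_*})(d_\epsilon) = e_\epsilon$ for $\epsilon < \delta$ requires filling a pre-cut of cofinalities $(\cf(\delta), \theta)$ with $\cf(\delta), \theta \leq \lambda$; since $\xp_\cs > \lambda$ by \ref{no-cuts}, this pre-cut is never a cut and trimming succeeds. A final application of $\lambda^+$-treetops to $\langle \tc_\alpha : \alpha < \lambda \rangle$ produces $\tc_* \in \mct$ whose induced function restricts to a bijection sending $d_\epsilon \mapsto e_\epsilon$ for every $\epsilon < \lambda$, after a last trim by 2-separation to identify $\{\epsilon : d_\epsilon \in \dom(\fun(\tc_*))\}$ with $\lambda$.

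The main potential obstacle is the interaction between the ideal-theoretic Steps 2--4 of \ref{c:tfeq} and the limit construction: one must ensure that at each limit stage the trimmed upper bound still respects all previously committed pairs $(d_\epsilon, e_\epsilon)$. Under the treetops hypothesis alone this is delicate because one has no direct bound on $\xp_\cs$, but invoking Theorem \ref{no-cuts} collapses all the cut-existence anxieties at once. An even cleaner alternative, as indicated in Discussion \ref{d:24}, is simply to apply Theorem \ref{maximal-uf}: $\lambda^+$-treetops implies $\de$ is $\lambda^+$-good, whence by Theorem \ref{good-max} $\de$ $\lambda^+$-saturates every countable complete theory, in particular $\tfeq$. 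I would present the direct argument first (as it is the historical and technically informative route) and note the one-line deduction from \ref{maximal-uf} as a remark.
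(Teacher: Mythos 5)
Your proposal does prove the statement, but by a genuinely different route than the paper, and the difference matters. The paper's proof of Corollary \ref{c:tfeq-c} deliberately avoids the wholesale collapse through Theorem \ref{no-cuts}: it splits into two cases. If $\xt_\cs = \xp_\cs > \lambda$, then by the definition of $\xp_\cs$ and Corollary \ref{empty-good} the ultrafilter is already $\lambda^+$-good, hence saturates $\tfeq$. Otherwise $\xp_\cs < \xt_\cs$, and Reduction \ref{z10} (the reduction to a single asymmetric cut type) forces $\lambda = \kappa^+ = \xp_\cs < \xt_\cs$ --- which is exactly the hypothesis of Claim \ref{c:tfeq}; 2-separation comes from the random-graph hypothesis via Fact \ref{rg-good} and Definition \ref{2-sep}, Claim \ref{c:tfeq} then supplies the internal bijections, and Theorem \ref{tfeq-m} (or Corollary \ref{m-cor}) finishes. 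As Discussion \ref{d:24} explains, the point of presenting this ``direct'' argument is that it uses only the reduction to few asymmetric cuts rather than the full no-cuts theorem, and it isolates the historically decisive case. Your main argument instead invokes Theorem \ref{no-cuts} at the outset; but once $\mc(\cs,\xt_\cs)=\emptyset$ and $\xt_\cs>\lambda$ are in hand, Claims \ref{up-csp2}--\ref{up-csp3} and Corollary \ref{empty-good} already yield that $\de$ is $\lambda^+$-good, so the subsequent rerun of the Step-5 construction of \ref{c:tfeq} --- and indeed the random-graph/2-separation hypothesis itself --- is superfluous: your construction operates only in the regime $\xp_\cs>\lambda$, which is precisely the regime in which Claim \ref{c:tfeq} is not needed, and it never engages the hard case $\xp_\cs=\lambda=\kappa^+$ that the claim was built to handle. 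Your closing observation, that the corollary follows in one line from Theorem \ref{maximal-uf} together with Keisler's characterization \ref{good-max}, is correct and is exactly the ``first'' proof acknowledged in Discussion \ref{d:24}; what the paper's route buys in exchange for the extra work is independence from the full strength of Theorem \ref{no-cuts} and applicability in the more general cofinality-spectrum setting, so if you take the short route you should present it as the proof rather than appending it to a construction it makes redundant.
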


\begin{proof}
As $\de$ is a regular ultrafilter, we may choose any $M$ with enough set theory for trees and
consider the ultrapower as a cofinality spectrum problem $(M, M^\lambda/\de, \dots)$.
By assumption, $\de$ has $\lambda^+$-treetops \emph{thus} $\xt_\cs > \lambda$. There
are two cases. If $\xt_\cs = \xp_\cs > \lambda$, then by definition of $\xp_\cs$
(\ref{cst:card} above) and Corollary \ref{empty-good}, $\de$ is $\lambda^+$-good.
Thus necessarily $\de$ saturates $\tfeq$.

Otherwise, $\xt_\cs > \xp_\cs$. It will suffice by Corollary \ref{m-cor} to show that
bijections exist.
By Reduction \ref{z10}, the case $\xt_\cs > \xp_\cs$ necessarily entails that $\lambda$ is the successor
of a regular cardinal $\kappa$ and that $\xt_\cs > \xp_\cs = \lambda = \kappa^+$.
Thus, for any $\ma \in \ord(\cs)$ and any suitably chosen sequences
$\{ d_\epsilon : \epsilon < \lambda \}$, $\{ e_\epsilon : \epsilon < \lambda \}$ of
elements of $X_\ma$, we may apply Lemma \ref{c:tfeq} to obtain a suitable bijection.
We conclude by Theorem \ref{tfeq-m} or just Corollary \ref{m-cor} that $\de$ saturates $\tfeq$, as desired.
\end{proof}

We now have the ingredients to prove Theorem \ref{t:tfeq}.

\begin{proof}[Proof of Theorem \ref{t:tfeq}.]

Let $\lambda \geq \aleph_0$ and let $\de$ be a regular ultrafilter on $\lambda$.
Suppose that $\de$ saturates some non-simple theory $T_*$. It suffices to show that
$\de$ necessarily also saturates $\tfeq$.
There are two cases which, by Fact \ref{tp2-sop2}, cover all possibilities.

\step{Case 1}.
$T_*$ has $TP_2$. Then $\de$ saturates $\tfeq$ by Theorem \ref{tfeq-m}.

\step{Case 2}.
$T_*$ has $SOP_2$. By Fact \ref{rg-good}, $\de$ saturates $\trg$. 
By Conclusion \ref{sop2-concl}, $\de$ has $\lambda^+$-treetops.
So we may apply Claim \ref{c:tfeq-c} to conclude $\de$ saturates $\tfeq$.

\br \noindent This completes the proof of Theorem \ref{t:tfeq}.
\end{proof}

\br

\section{$\xp=\xt$} \label{s:p-t}

In this section we apply the main theorem of Section \ref{s:main-thm} to prove Theorem \ref{theorem:p-t}.

\begin{defn} \emph{(see e.g. van Douwen \cite{douwen}, Vaughan \cite{vaughan}, Blass \cite{blass})}
We define several properties which may hold of a family $D \subseteq [\mathbb{N}]^{\aleph_0}$. 
Let $A \subseteq^* B$ mean that $\{ x : x \in A, ~x \notin B \}$ is finite.

\begin{itemize}
\item $D$ has a \emph{pseudo-intersection} if there is an infinite $A \subseteq \mathbb{N}$ such that
for all $B \in D$, $A \subseteq^* B$.
\item $D$ has the s.f.i.p. (strong finite intersection property)
if every nonempty finite subfamily has infinite intersection.
\item $D$ is
called a tower if it is well ordered by ${\supseteq^*}$ and has no infinite pseudo-intersection.
\item $D$ is called open if it is closed under almost subsets, and dense if every $A \in [\mathbb{N}]^{\aleph_0}$
has a subset in $D$.
\end{itemize}

We then define:
\begin{align*}
\xp = & \min \{ |\eff| ~ : ~ \eff \subseteq [\mathbb{N}]^{\aleph_0}~ \mbox{has the s.f.i.p. but has no infinite pseudo-intersection} \} \\
\xt = & \min \{ |\mct| ~ : ~ \mct \subseteq [\mathbb{N}]^{\aleph_0} ~\mbox{is a tower} \} \\
\xb = & \min \{ |B| ~ : ~ B \subseteq {^\omega \omega} ~\mbox{is unbounded in $({^\omega \omega}, \leq_*)$} \} \\
\xh = & \mbox{ the smallest number of dense open families with empty intersection}  
\end{align*}
\end{defn}

\vspace{1mm}

\begin{fact} \label{x:fact} $\xp$ and $\xt$ are regular. It is known that
$\xp \leq \xt \leq \xh \leq \xb$.
\end{fact}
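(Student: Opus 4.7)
The plan is to verify each clause in turn; all four are classical results collected here as background for Section \ref{s:p-t}. For $\xp \leq \xt$: any tower $\mct$ is well-ordered under $\supseteq^*$, so every finite subfamily $\{B_0, \ldots, B_n\} \subseteq \mct$ has a $\supseteq^*$-minimum element, say $B_0$, and then $B_0 \cap B_1 \cap \cdots \cap B_n$ equals $B_0$ minus a finite set (since $B_0 \subseteq^* B_i$ for each $i$), hence is infinite. Thus $\mct$ has the s.f.i.p.\ and by hypothesis no infinite pseudo-intersection, so $\mct$ witnesses $\xp \leq |\mct|$; minimising over towers gives $\xp \leq \xt$. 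For regularity of $\xt$: a minimum-size tower $\mct$ may be thinned to a $\supseteq^*$-cofinal subfamily of order type $\cf(\mct, \supseteq^*)$, which remains a tower (any pseudo-intersection of a cofinal subfamily of $\mct$ is a pseudo-intersection of $\mct$ itself). By minimality this cofinality equals $|\mct| = \xt$, so $\xt$ is regular; since $\xt \geq \xp \geq \aleph_1$ by Hausdorff's theorem quoted in the introduction, the cofinality is infinite.

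For the regularity of $\xp$, the classical argument of Szyma\'nski proceeds by contradiction. Suppose $\eff$ witnesses $\xp$ and $\kappa := \cf(\xp) < \xp$. Partition $\eff = \bigcup_{\alpha<\kappa} \eff_\alpha$ with each $|\eff_\alpha| < \xp$. Each $\eff_\alpha$ has the s.f.i.p.\ (as a subfamily of $\eff$), so by minimality of $\xp$ it admits an infinite pseudo-intersection $A_\alpha$. The family $\{A_\alpha : \alpha < \kappa\}$ is again finitely-intersecting: any finite intersection $A_{\alpha_1} \cap \cdots \cap A_{\alpha_k}$ almost-contains the intersection of any finitely many members of $\eff_{\alpha_1} \cup \cdots \cup \eff_{\alpha_k}$, which is infinite. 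Since $\kappa < \xp$, this smaller family has an infinite pseudo-intersection $A$; but then $A \subseteq^* A_\alpha \subseteq^* B$ for every $\alpha < \kappa$ and every $B \in \eff_\alpha$, so $A$ would be a pseudo-intersection of all of $\eff$, contradicting the choice of $\eff$.

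For $\xt \leq \xh$ and $\xh \leq \xb$: both are standard results in the cardinal-invariants literature (see \cite{douwen}, \cite{vaughan}, \cite{blass}). The cleanest proof of $\xh \leq \xb$ assigns to each member $f_\alpha$ of an unbounded family the dense open family $\mathcal{D}_\alpha := \{B \in [\mathbb{N}]^{\aleph_0} : \text{the increasing enumeration of } B \text{ eventually dominates } f_\alpha\}$; if these $\xb$ families had a common member $B$, its enumeration would eventually dominate every $f_\alpha$, contradicting unboundedness. The inequality $\xt \leq \xh$ is Balcar--Pelant--Simon type: one extracts from any family of fewer than $\xt$ dense open sets a tower-like refinement yielding a pseudo-intersection, and then argues that $\xh$ realizations of this scheme produce a tower of length at most $\xh$.

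The only genuinely non-trivial item is Szyma\'nski's theorem on the regularity of $\xp$; the remaining assertions follow either directly from the definitions (as with $\xp \leq \xt$ and regularity of $\xt$) or by standard dense-open-set constructions. Since the authors cite this fact as ``known,'' the proof amounts simply to recording these classical arguments; the real work begins with Theorem \ref{t:885} and the translation of the $\xp < \xt$ problem into a cofinality spectrum problem as sketched in \S\ref{ss:4}.
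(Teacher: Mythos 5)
Note first that the paper does not reprove these facts: it records regularity of $\xt$ and $\xp \leq \xt$ as clear from the definitions and otherwise cites the literature (van Douwen for Szyma\'nski's regularity of $\xp$, Rothberger and Balcar--Pelant--Simon, via Blass, for $\xt \leq \xh \leq \xb$). Your arguments for $\xp \leq \xt$ and for the regularity of $\xt$ are correct and match the standard ones.

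The genuine gap is in your proof of the regularity of $\xp$. You assert that the family $\{A_\alpha : \alpha < \kappa\}$ of pseudo-intersections has the s.f.i.p.\ because a finite intersection $A_{\alpha_1} \cap \cdots \cap A_{\alpha_k}$ ``almost-contains'' the intersection of finitely many members of $\eff_{\alpha_1} \cup \cdots \cup \eff_{\alpha_k}$. The almost-inclusions go the other way: being a pseudo-intersection means $A_\alpha \subseteq^* B$ for each $B \in \eff_\alpha$, so $A_{\alpha_1} \cap \cdots \cap A_{\alpha_k}$ is almost \emph{contained in} such intersections, which gives no lower bound on its size. Indeed two pseudo-intersections of the same family can be disjoint (split one infinite pseudo-intersection into two infinite halves), so $\{A_\alpha : \alpha < \kappa\}$ need not have the s.f.i.p.\ at all; and the obvious repair---choosing $A_\alpha$ recursively as a pseudo-intersection of $\eff_\alpha \cup \{A_\beta : \beta < \alpha\}$---stalls for the same reason, since nothing guarantees that $A_\beta$ meets members of $\eff \setminus \eff_\beta$ in an infinite set. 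This is precisely why Szyma\'nski's theorem is not a routine partition-and-diagonalize argument and why the paper cites \cite{douwen} Theorem 3.1(e) rather than proving it. Two smaller points: your dense families for $\xh \leq \xb$ are not open in the sense used here, since adjoining finitely many small elements to $B$ can destroy ``the increasing enumeration of $B$ eventually dominates $f_\alpha$''; the standard fix is to use instead the sets of those $B$ all but finitely many of whose elements $b$ satisfy $f_\alpha(b) < \min\bigl(B \setminus (b+1)\bigr)$, which are closed under almost-subsets. And the usual proof of $\xt \leq \xh$ is more direct than your sketch: given fewer than $\xt$ dense open families, build a $\subseteq^*$-decreasing sequence meeting them one at a time (possible since a decreasing sequence of length $< \xt$ always has a pseudo-intersection), and note that a pseudo-intersection of the whole sequence lies in every family by openness.
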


\begin{proof} 
Regularity of $\xt$ and the first inequality are clear from the definitions. 
For regularity of $\xp$, due to Szyma\'nski, see e.g. van Douwen \cite{douwen} Theorem 3.1(e). 
The result $\xt \leq \xb$ is due to Rothberger \cite{roth-b}, attributed in \cite{douwen} p. 123, 
and the result $\xt \leq \xh \leq \xb$ is due to Balcar, Pelant and Simon \cite{b-p-s}, attributed in \cite{vaughan} p. 200. 
For completeness: for the second inequality, see e.g. Blass \cite{blass} Prop. 6.8; for the third, \cite{blass} Theorem 6.9. 
\end{proof}

To begin, we look for a relevant cofinality spectrum problem. 

\begin{defn} \label{d:forcing} We fix the following for the remainder of this section.
\begin{enumerate}
\item Let $M = (\mch(\aleph_1), \in )$.
\item Let $\bq = ( [\mathbb{N}]^{\aleph_0}, \supseteq^* )$ be our forcing notion, and $\vv$ a transitive model of ZFC. 
\item Let $\name{\mg}$ be the canonical name of a generic subset of $\bq$ 
$($which is forced to be an ultrafilter on the Boolean algebra $\mcp(\mathbb{N})^{\vv}$$)$. 
\item Let $\mg$ be a generic subset of $\bq$ over $\vv$, which we fix for this section. $($Often, however, 
we will simply work in $\vv$ using the name $\name{\mg}$.$)$
\item For $f \in \vv$, let $\name{f}$ denote the $\bq$-name for $f$.
\end{enumerate}

Define the \emph{generic ultrapower} in the forcing extension $\vv[\mg]$ as follows: 

\begin{enumerate}[resume] 
\item Given $M, \bq$ and $\mg$, by the generic ultrapower $M^\omega/\mg$ in $\vv[\mg]$ we will mean the model $\cn \in \vv[\mg]$ 
with universe $\{ f/\mg : f \in (^\omega M)^\vv \}$, such that 
\begin{itemize} 
\item $\cn \models$ ``$f_1/\mg = f_2/\mg$'' iff $\{ n : f_1(n) = f_2(n) \} \in \mg$ $($this set is necessarily from $\vv$$)$
\item $\cn \models$ ``$(f_1/\mg) \in (f_2/\mg)$'' iff $\{ n : f_1(n) \in f_2(n) \} \in \mg$.
\end{itemize}
We denote by $\jj = \jj_\mg : M \rightarrow \cn$ the map given by $\jj(a) = \langle \dots a \dots \rangle/\mg$.
\end{enumerate}

It will also be useful to refer to these objects in $\vv$. 
\begin{enumerate}[resume]
\item In $\vv$, 
let $\name{\cn}$ be the $\bq$-name of the generic ultrapower $M^\omega/\name{\mg}$, i.e. the model with
\begin{enumerate}
\item universe $\{ {f}/\name{\mg} : f \in (^\omega M)^\vv \}$ such that:
\item $\Vdash_\bq$ ``$(\name{\cn} \models {f_1}/\name{\mg} = f_2/\mg)$ iff 
$\{ n : f_1(n) = f_2(n) \} \in \name{\mg}$''  \\ $($as noted, this set is necessarily from $\vv$$)$
\item $\Vdash_\bq$ ``$(\name{\cn} \models$ $(f_1/\name{\mg}) \in (f_2/\name{\mg}))$ 
iff $\{ n : f_1(n) \in f_2(n) \} \in \name{\mg}$''
\end{enumerate}

\item Note that $\cn = \name{\cn}[\mg]$. 
\end{enumerate}
\end{defn}

By definition of $\xt$, $\bq$ is $\xt$-complete. Thus, 
forcing with $\bq$ adds no new bounded subsets of $\xt$ $($where new means ``$\notin \vv$''$)$ 
and no new sequences of length $<\xt$ of members of $\vv$.
Moreover, by the $\xt$-completeness of $\bq$, and since $\xp \leq \xt$, moving from $\vv$ to $\vv[\mg]$ will not affect whether 
$\xp < \xt$.

We now build a cofinality spectrum problem. We will let $M = M^+ = (\mch(\aleph_1), \in )$ as above and 
$M_1 = M^+_1 = \cn$. For generic ultrapowers, the parallel of \lost theorem holds, so $\jj$ is an elementary 
embedding of $M$ into $\cn$.

\begin{defn} \label{d:psf}
Working in $\vv[\mg]$, let $M, \cn$ be as in $\ref{d:forcing}$. Let 
$\Delta_{\mathrm{psf}}$ be the set of all first-order formulas  
$\vp(x,y,\bar{z})$ in the vocabulary of $M$, i.e. $\{ \in, = \}$, such that if 
$\bar{c} \in {^{\ell(\bar{z})}M}$ then $\vp(x,y,\bar{c})$ is a linear order on the finite set 
$A_{\vp,\bar{c}} = \{ a : M \models \vp(a,a,\bar{c}) \}$, denoted by $\leq_{\vp,\bar{c}}$.  
We require $\ell(x) = \ell(y)$ but do not require $\ell(x) = 1$.
\end{defn}

\begin{obs} \label{o:x}
If $M, \cn$ are as above and $\vp \in \Delta_{\mathrm{psf}}$, then: 
\begin{enumerate}
\item[(a)] for each $\overline{c} \in {^{\ell(\overline{y})}\cn}$,
$\vp(x,y;\overline{c})$ is a discrete linear order on the set
\[ \{ a : \cn \models \vp(a,a;\overline{c}) \} \]
\item[(b)] each nonempty $\cn$-definable subset of $A_{\vp, \bar{c}}$ has a first and last element.
\item[(c)] we may in $\cn$ identify $(A_{\vp, \bar{c}}, \leq_{\vp, \bar{c}})$ with a definable subset of some 
\[ \langle (X_n, \leq_n) : n < \omega \rangle /\mg \]
where each $X_n$ is finite and linearly ordered by $\leq_n$.
\end{enumerate}
\end{obs}

\begin{proof}
\lost theorem.
\end{proof}

\begin{claim} \label{q-cor}
Working in $\vv[\mg]$, $(M, \cn, Th(M), \Delta_{\mathrm{psf}})$ is a cofinality spectrum problem which, for the remainder of this section, we call $\cs$.
\end{claim}

\begin{proof}
We check Definition \ref{d:estt}. Conditions (1)-(2) are immediate. For (3), let $\Delta = \Delta_{\mathrm{psf}}$, so the only other 
data we need to specify is in each instance to set $d_\ma$ to be the maximum element. (4) is by construction, see \ref{o:x}.    
Suppose we are given $\ma$, $\mb$ with $X_\ma = \{ a : \cn \models \vp_1(a,a,\bar{c_1})\}$ and $X_\mb = \{ a: \cn \models \vp_2(a,a,\bar{c_2}) \}$, 
where $\vp_1, \vp_2$ are from \ref{d:psf}.
Let $\theta(x_1 y_1 x_2 y_2, \bar{c_1}, \bar{c_2})$ be the formula which implies $(x_1, y_1), (x_2, y_2) \in X_\ma \times X_\mb$ and 
gives the G\"odel pairing function, i.e we order $(x_1, y_1)$, $(x_2, y_2)$
first by maximum, then by first coordinate, then by second coordinate. Clearly $\theta(x_1y_1, x_2y_2, z_1z_2) \in \Delta_{psf}$.
This gives (5) and (6). It remains to check we have trees. 
Let $\ma$ be given, and suppose $\vp_\ma = \vp(x,y,z)$. Let $\psi(w,z)$ be such that for each $\bar{c} \in {^{\ell(\bar{z})}M}$ 
we have that $\mct_{\vp, \bar{c}} = \{ \eta : M \models \psi(\eta, \bar{c}) \}$ is the set of finite sequences of members of $A_{\vp, \bar{c}} = \{ a : M \models \vp(a,a,\bar{c}) \}$ 
of length $< \max A_{\vp, \bar{c}}$. Let $\tlf = \{ (\eta, \nu) : \eta, \nu \in \mct_{\vp, \bar{c}}$ and $\eta$ is an initial segment of $\nu$ $\}$. 
We can likewise define the functions $\lgn$ and $\xr$. Clearly by \lost theorem these objects will behave as desired in $\cn$. 
This completes the proof of (7), and (8) is immediate, so we are done. 
\end{proof}

\br
\noindent It follows from the definitions that $\xp \leq \xt$. If $\xp = \xt$ then Theorem \ref{theorem:p-t} is immediately true. So
we shall assume, towards a contradiction, that $\xp < \xt$ in $\vv$.

\begin{claim} \label{c:x1} Working in $\vv[\mg]$, 
let $\cs$ be the cofinality spectrum problem from $\ref{q-cor}$. Then $\xt \leq \xt_\cs$.

That is, let $\ma \in \ord(\cs)$ be given, so $\cn \models$ ``$(\mct_\ma, \tlf_\ma)$ is a tree of finite sequences of
$(X_{\ma}, \leq_{\ma})$''. Then any increasing sequence of cofinality $\kappa < \xt$ in $(\mct_\ma, \tlf_\ma)^\cn$ has an
upper bound.
\end{claim}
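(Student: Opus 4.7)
The plan is to exploit the $\xt$-closedness of $\bq$ (Fact \ref{q-fact}(3)--(4)) to reduce the problem to a question about the generic ultrafilter $\mg$, and then to invoke the genericity of $\mg$ to build the required upper bound. Let $\kappa = \cf(\kappa) < \xt$ and fix a strictly $\tlf_\ma$-increasing sequence $\langle c_\alpha : \alpha < \kappa \rangle$ in $\mct_\ma^\cn$. Since $\bq$ adds no new $<\xt$-sequences of ground-model elements, I may pick representatives $f_\alpha \in ({}^\omega M)^\vv$ with $c_\alpha = f_\alpha/\mg$ so that the entire sequence $\langle f_\alpha : \alpha < \kappa \rangle$ lies in $\vv$. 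For $\alpha < \beta < \kappa$ define $A_{\alpha,\beta} = \{n < \omega : f_\alpha(n) \trianglelefteq f_\beta(n) \text{ in } \mct_n\}$; each $A_{\alpha,\beta}$ is a ground-model set belonging to $\mg$, since $c_\alpha \tlf_\ma c_\beta$ in $\cn$. Note that each $\mct_n$ is literally finite (as $\mct_\ma$ is pseudofinite in $\cn$).

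The first real step is a lemma that $\mg$ behaves as a $\xt$-complete filter on ground-model subsets of $\omega$: for every $\gamma < \xt$ and every $\langle X_\xi : \xi < \gamma \rangle \in \vv$ with each $X_\xi \in \mg$, there is $Y \in \mg$ with $Y \subseteq^* X_\xi$ for all $\xi < \gamma$. One proves this by a nested transfinite induction: after reducing (by the same technique) to a $\subseteq^*$-decreasing sequence $\langle Y_\xi \rangle$ in $\mg$, at each limit stage $\eta < \gamma$ observe that in $\vv$ the set of conditions that are pseudo-intersections of $\langle Y_\xi : \xi < \eta \rangle$, together with those having finite intersection with some $Y_{\xi_0}$, is dense in $\bq$; the first alternative is available by $\xt$-closedness, and genericity forces $\mg$ to meet it rather than the second (which would contradict $Y_{\xi_0} \in \mg$). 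Applying this lemma iteratively, I would build in $\vv$ a $\subseteq^*$-decreasing sequence $\langle B_\alpha : \alpha < \kappa \rangle$ of elements of $\mg$ with $B_\alpha \subseteq^* A_{\gamma,\delta}$ whenever $\gamma < \delta \leq \alpha$, and then extract a single $B^* \in \mg$ refining all $B_\alpha$ in $\subseteq^*$.

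To define the upper bound, I would let $g(n) \in \mct_n$ be the deepest node $t$ such that $\{\alpha < \kappa : t \trianglelefteq f_\alpha(n)\}$ contains a final segment of $\kappa$; this node exists and is unique because the root trivially qualifies and at any fixed depth two distinct qualifying nodes would force a single $f_\alpha(n)$, for $\alpha$ above both thresholds, to have two distinct ancestors at that depth. The main obstacle, and what I expect to require the most care, is to verify $\{n : f_\beta(n) \trianglelefteq g(n)\} \in \mg$ for each $\beta < \kappa$. The natural sufficient condition is ``$\beta$-stability'' at $n$, namely that $n \in \bigcup_{\alpha_0 < \kappa} \bigcap_{\alpha > \alpha_0} A_{\beta,\alpha}$, whence the canonical descent passes through $f_\beta(n)$; however, $\beta$-stability is a $\bigcup\bigcap$-statement across a $\kappa$-family of $\mg$-sets and is not directly delivered by the lemma. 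I expect to resolve this by leveraging the coherent sequence $\langle B_\alpha \rangle$: on $B^*$, the cofinality of the set of $\alpha$ for which $n \in B_\alpha$ is controlled, and by combining the regularity of $\kappa$ with one more density-plus-genericity argument (ruling out, via $\bq$-closedness, that $\mg$ concentrates on the co-stable set for any fixed $\beta$) one shows that $\beta$-stability, or a sufficient weakening thereof, holds on a $\mg$-large subset, yielding $g/\mg \in \mct_\ma^\cn$ as the desired upper bound.
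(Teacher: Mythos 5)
The first half of your plan is sound: the ``mod-finite $\xt$-completeness'' of $\mg$ over ground-model-coded families of $\mg$-sets is provable exactly as you sketch (density of the set of conditions that are either pseudo-intersections or almost disjoint from some member, plus $\xt$-closedness of $\bq$ to keep the recursively built $\subseteq^*$-decreasing sequence in $\vv$), and it is in the spirit of what the paper does. The genuine gap is at the last step, and it is not just an unproved lemma: the statement you need --- that $\{n : f_\beta(n)\trianglelefteq g(n)\}\in\mg$, equivalently that the ``$\beta$-stable'' set $S_\beta=\bigcup_{\alpha_0<\kappa}\bigcap_{\alpha_0<\alpha<\kappa}A_{\beta,\alpha}$ is $\mg$-large --- is false in general, so no further density-plus-genericity argument can deliver it. The reason is that $B^*\subseteq^* A_{\beta,\alpha}$ only controls a finite exceptional set \emph{per pair}, and as $\alpha$ ranges over $\kappa$ these finite sets can cover every single index $n$ cofinally often. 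Concretely: given any ground-model representatives $f_\alpha$ of a strictly increasing sequence, partition $\kappa$ into countably many cofinal pieces $\langle U_n : n<\omega\rangle$ and, for $\alpha\in U_n$, alter $f_\alpha$ at the single coordinate $n$, replacing $f_\alpha(n)$ by a node of $\mct_n$ incomparable with $f_\beta(n)$ (possible for all but finitely many $n$ for any $\beta$ with $f_\beta/\mg$ not the root). Each altered function differs from the original at one point, so you obtain ground-model representatives of the \emph{same} increasing sequence, and every $A_{\gamma,\delta}$ changes by at most two points, so all hypotheses are untouched; but now at every $n$ cofinally many $\alpha$ satisfy $f_\beta(n)\not\trianglelefteq f_\alpha(n)$, so any eventually-comparable node at $n$ lies below the deviant value and hence $f_\beta(n)\not\trianglelefteq g(n)$ for every such $n$. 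Thus your $g/\mg$ is not an upper bound, and the construction is not even invariant under the choice of representatives, which the hypothesis is.

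What is missing is precisely the paper's device for avoiding this exchange of quantifiers: it does not take a pointwise limit at all. After a reduction like yours (in the paper's version to $({^{\omega >} \omega},\trianglelefteq)^\cn$, where an extra step using $\kappa<\xb$ traps all values $f_\alpha(n)$ inside uniformly finite subtrees $s_n$; in your finite-$\mct_n$ presentation this trapping is automatic), one encodes the \emph{cones} $Y_\alpha=\{(n,\nu) : \nu\in s_n,\ f_\alpha(n)\trianglelefteq\nu,\ n\in B\}$ as a $\subseteq^*$-decreasing $\kappa$-sequence of infinite subsets of the fixed countable set $\bigcup_n\{n\}\times s_n$, uses $\kappa<\xt$ to obtain an infinite pseudo-intersection $Z$, and defines the upper bound by picking, for each $n$ in the (infinite) projection of $Z$, one node $\nu_n$ with $(n,\nu_n)\in Z$. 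For each fixed $\alpha$ one then only needs ``$f_\alpha(n)\trianglelefteq\nu_n$ for all but finitely many relevant $n$,'' which is exactly what $Z\subseteq^* Y_\alpha$ provides --- a per-$\alpha$ almost-everywhere statement, not the per-$n$ eventual stabilization in $\alpha$ that your $g$ requires. Your completeness lemma applied to the sets $A_{\beta,\alpha}\subseteq\omega$ cannot recover this, because it forgets which node witnesses comparability at each index; to repair your argument you should run the pseudo-intersection (tower-number) argument on the node-indexed sets $Y_\alpha$ rather than on the $A_{\beta,\alpha}$, after which the conclusion follows as in the paper.
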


\begin{rmk}
In fact $\xt = \xt_\cs$, but this will not be needed.
\end{rmk}

\begin{proof}[Proof of Claim \ref{c:x1}.]

First, a reduction: Recalling the definition of $\cs$ in $\ref{q-cor}$ and \ref{o:x}(c), we may assume 
$(X_\ma, <_\ma, \mct_\ma, \tlf_\ma) = \langle (X_{\ma_n}, <_{\ma_n}, \mct_n, \tlf_{\mct_n}) : n < \omega \rangle /\mg$.
So without loss of generality for each $n < \omega$ $(X_{\ma_n}, <_{\ma_n}, \mct_n, \tlf_{\mct_n})$ is standard, i.e.
$X_{\ma_n}$ is finite and $\mct_{n}$ is the set of finite sequences of elements of $X_{\ma_n}$, partially ordered by inclusion.

For each $n<\omega$, there is an isomorphism $h_n : (X_{\ma_n}, <_{\ma_n}) \rightarrow (k_n, <_{k_n})$ where $k_n \in \omega$, 
$<_k$ is the usual order on $\omega$ restricted to $k$, and $\lim_\mg \langle k_n : n < \omega \rangle$ is infinite. 
Then in $\cn$, $h = \langle h_n : n < \omega \rangle/\mg$ gives an isomorphism between
$(\mct_\ma, \tlf_\ma)$ and a definable downward closed subset of $({^{\omega >} \omega}, \tlf)^\cn$. So for the remainder of the proof,
without loss of generality, we work in the tree $({^{\omega >} \omega}, \tlf)^\cn$. This completes the reduction. 

We work now in $\vv$. 
Let $\theta = \cf(\theta) < \xt$ be given and let $B \in \bq$ ($B \in \mg$) be such that:
\[ B \Vdash_{\bq} \mbox{``} \langle \name{f}_\alpha/\name{\mg} : \alpha < \theta \rangle ~
\mbox{is $\tlf^{\name{\cn}}$-increasing in } ({^{\omega >} \omega}, \tlf)^{\name{\cn}} \mbox{''} \]
where without loss of generality, $B \Vdash_\bq$ ``$\name{f}_\alpha = f_\alpha$'' for $\alpha < \theta$ since 
forcing with $\bq$ adds no new sequences of length $<\xt$.

By assumption, $\theta < \xt$ thus $\theta < \xb$, the bounding number. So we may choose some increasing function
$g: \mathbb{N} \rightarrow \mathbb{N} \setminus \{0\}$
such that for each $\alpha < \theta$ there is $n_\alpha$ satisfying:
\[ \mbox{if $n \geq n_\alpha$ then } g(n) > \lg(f_\alpha(n)) + \Sigma \{ f_\alpha(n)(j) : j < \lg(f_\alpha)(n) \} \]
Informally, for each $\alpha < \theta$, for all but finitely many $n$, $g(n)$ dominates the sum of all values in the range of $f_\alpha$
when the domain is restricted to $n$.

Now let $\overline{s} = \langle s_n : n < \omega \rangle$ be given by
\[ s_n = {^{g(n) \geq} g(n)} = \{ \eta : \eta ~\mbox{a sequence of length $\leq g(n)$ of numbers $< g(n)$} \} \]

Then
\begin{enumerate}
\item each $s_n$ is a finite nonempty subtree of ${^{\omega > } \omega}$
\item if $\alpha < \theta$ then $(\forall^{\infty} n) (f_\alpha(n) \in s_n)$
\end{enumerate}

We use the following notation:
\[ \left( {^{\omega >} \omega} \right)^{[\nu]} =\{ \eta \in {^{\omega >} \omega} : \nu \trianglelefteq \eta \} \]

Then for each $\alpha < \theta$, we define $Y_\alpha$ as follows: 
\[ Y_\alpha = \bigcup \{ \{ n \} \times \left( s_n \cap \left( {^{\omega >} \omega} \right)^{[f_\alpha(n)]} \right) : n \in B \} \]

Finally, let
\[ Y_* = \bigcup \{ \{n \} \times s_n : n \in B \} \]

Then for each $\alpha < \theta$, we have:
\begin{enumerate}
\item $Y_\alpha \subseteq B \times {^{\omega >} \omega}$
\item $Y_\alpha \cap ( \{n\} \times {^{\omega >} \omega} )$ is finite, and $\subseteq \{ n\} \times s_n$ for every $n$
\item $Y_\alpha$ is infinite
\item $Y_\alpha \subseteq Y_*$
\end{enumerate}

Moreover, if $\alpha < \beta$ then $Y_\beta \subseteq^* Y_\alpha$. 
Why? If $\alpha < \beta$ then $\{ n \in B : f_\alpha(n) \not\trianglelefteq f_\beta(n) \}$ is finite,
as otherwise there is $B^\prime \geq_\bq B$ contradicting 
\[ B \Vdash_\bq \left(\name{\cn} \models ~\mbox{``} f_\alpha/\name{\mg} \tlf f_\beta/\name{\mg} \mbox{''} \right) \]

So as $\xt = \lambda > \theta$, there is an infinite $Z \subseteq Y_*$ such that
$\alpha < \theta$ implies $Z \subseteq^* Y_\alpha$.  Let $B_1 = \{ n \in B : Z \cap ( \{n\} \times s_n ) \neq \emptyset \}$.
For each $n \in B_1$ choose $\nu_n \in s_n$ such that $(n, \nu_n) \in Z \cap ( \{ n \} \times s_n )$.
Choose $\nu_n = \langle 0 \rangle$ for $n \in \mathbb{N} \setminus B$.
Then
 \[ B_1 \Vdash_\bq \text{``}\langle \nu_n : n < \omega \rangle/\name{\mg} 
 \text{ is an upper bound for $\langle f_\alpha/\name{\mg} : \alpha < \theta \rangle$ in }({^{\omega >} \omega}, \tlf)^{\name{\cn}}\text{''} \]
This completes the proof.
\end{proof}

\begin{concl} \label{c:x5} 
Working in $\vv[\mg]$, let $\cs$ be the cofinality spectrum problem from \ref{q-cor}. Then $\mc(\cs, \xt) = \emptyset$.
\end{concl}

\begin{proof}
By Theorem \ref{no-cuts}, $\mc(\cs, \xt_\cs) = \emptyset$, and by Claim \ref{c:x1}, $\xt \leq \xt_\cs$.
\end{proof}

\begin{disc}
These results do not contradict the existence of Hausdorff gaps, see e.g. Definition $2.26$ of \cite{todorcevic}. 
This is because to obtain treetops 
and the transfer of the peculiar cut below, we restrict ourselves to some infinite subset of $\omega$. 
\end{disc}

Aiming for a contradiction, we will leverage Conclusion \ref{c:x5} against a cut existence result from Shelah \cite{Sh:885}, which we prove can
be translated to our context.

\begin{defn} \emph{(Peculiar cut, \cite{Sh:885} Definition 1.10)} \label{p-cut}
Let $\kappa_1, \kappa_2$ be infinite regular cardinals. A $(\kappa_1, \kappa_2)$-peculiar cut in ${^\omega \omega}$ is a pair
$(\langle g_i : i < \kappa_2 \rangle, \langle f_i : i < \kappa_1 \rangle)$ of sequences of functions in ${^\omega \omega}$
such that:
\begin{enumerate}
\item $(\forall i < j < \kappa_2) (g_i \lls g_j)$
\item $(\forall i < j < \kappa_1) (f_j \lls f_i)$
\item $(\forall i < \kappa_1)(\forall j < \kappa_2) (g_j \lls f_i)$
\item if $f: \omega \rightarrow \omega$ is such that
$(\forall i < \kappa_1)(f \lls f_i)$, then $f \lls g_j$ for some
$j < \kappa_2$
\item if $f: \omega \rightarrow \omega$ is such that
$(\forall j < \kappa_2)(g_j \lls f)$, then $f_i \lls f$ for some
$i < \kappa_1$
\end{enumerate}
\end{defn}

\begin{thm-lit} \label{t:885} \emph{(Shelah \cite{Sh:885} Theorem 1.12)}
Assume $\xp < \xt$. Then for some regular cardinal $\kappa$ there exists a $(\kappa, \xp)$-peculiar cut in
${^\omega \omega}$, where $\aleph_1 \leq \kappa < \xp$.
\end{thm-lit}

We include here a definition which we plan to investigate in a future paper (it is not used in the main line of our proof here).

\begin{disc} \label{disc:lcf}
We note here that for a $D$ a filter on $\mathbb{N}$ $($if $D$ is the cofinite filter we may omit it$)$, 
one may also define  
\begin{enumerate} 
\item for $\delta_1, \delta_2$ limit ordinals, normally regular cardinals, 
we say $(\overline{f}^1, \overline{f}^2)$ is a $D$-weakly peculiar $(\delta_1, \delta_2)$-cut when:
\begin{enumerate}
\item $\overline{f}^1 = \langle f^1_\alpha : \alpha < \delta_1 \rangle$ is $<_D$-increasing
\item $\overline{f}^2 = \langle f^2_\beta : \beta < \delta_2 \rangle$ is $<_D$-decreasing
\item $f^1_\alpha <_D f^2_\beta$ if $\alpha < \delta_1, \beta < \delta_2$
\item for no $A \in D$ and $f \in {^\omega \omega}$ do we have 
\[ \alpha < \delta_1 \land \beta < \delta_2 \mbox{ implies } f^1_\alpha <_\de f <_\de f^2_\beta \]
\end{enumerate}
\item we define
\[ \mc(D, {^\omega \omega} ) = \{ (\kappa_1, \kappa_2) : ~\mbox{there is a $D$-weakly peculiar $(\kappa_1, \kappa_2)$-cut, 
$\kappa_1, \kappa_2$ regular} \} \]
\end{enumerate}

In a work in preparation we will show that the lower cofinality exists for $\kappa < \xp$ in e.g. $\mc({^\omega \omega})$.
\end{disc}

We now connect the cut existence from Theorem \ref{t:885} with existence of a cut in the generic ultrapower $\cn$.

\begin{claim} \label{c:x0}
Working in $\vv[\mg]$, suppose $\xp < \xt$ and let $\cn, \cs$ be as above. 
Then for some regular $\kappa_1$ with $\aleph_1 \leq \kappa_1 < \xp$, we have that
$(\kappa_1, \xp) \in \mc(\cs, \xt)$. 
\end{claim}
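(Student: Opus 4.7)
The plan is to extract the required cut in some pseudofinite linear order of $\cn$ from the peculiar cut in $(({^\omega \omega})^\vv, \lls)$ via the map $h \mapsto h/\mg$. Working in $\vv$, Theorem \ref{t:885} (using $\xp < \xt$) yields $\aleph_1 \leq \kappa < \xp$, with $\kappa$ regular, and a $(\kappa, \xp)$-peculiar cut $(\langle g_j : j < \xp \rangle, \langle f_i : i < \kappa \rangle)$ in $({^\omega \omega}, \lls)$, with $\overline{g}$ increasing and $\overline{f}$ decreasing; after a finite modification, assume $g_0(n) \geq 1$ for all $n$, so $g_0 \lls f_i$ for every $i$. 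Now pass to $\vv[\mg]$: setting $a := f_0/\mg$, the set $X_\ma := \{ x \in \jj(\omega)^\cn : x \leq a \}$ is realized in $\cn$ as $\prod_n \{0,\dots,f_0(n)\}/\mg$ and therefore corresponds to a nontrivial $\ma \in \ord(\cs)$ in the sense of Definition \ref{d:psf}. Since $\mg$ extends the Fr\'echet filter, each $\lls$-comparison among $({^\omega \omega})^\vv$ transfers to a strict $<$-comparison of the corresponding $\mg$-classes in $\jj(\omega)^\cn$; hence $(\langle g_j/\mg : j < \xp \rangle, \langle f_i/\mg : i < \kappa \rangle)$ is a strictly monotonic pre-cut in $X_\ma$, with cofinality $\xp$ on the left and coinitiality $\kappa$ on the right by regularity of both cardinals.

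The main step is to show no element of $X_\ma$ fills this pre-cut. Because $\bq$ is $\xt$-closed and $\omega < \xt$ (Fact \ref{q-fact}(4)), every function $\omega \to M$ in $\vv[\mg]$ already lies in $\vv$; thus every potential filler has the form $\xi/\mg$ with $\xi \in ({^\omega \omega})^\vv$. Fix such $\xi$; I claim the set
\[ D_\xi := \{ B \in \bq : B \Vdash_\bq \text{``}\xi/\name{\mg} \text{ does not satisfy } g_j/\name{\mg} < \xi/\name{\mg} < f_i/\name{\mg} \text{ for all } i,j\text{''} \} \]
is dense in $\bq$. Given $B \in \bq$, split into two cases. \emph{Case A}: there is $i < \kappa$ with $B^\prime := \{ n \in B : \xi(n) \geq f_i(n) \}$ infinite; then $B^\prime$ refines $B$ in $\bq$ and $B^\prime \Vdash \xi/\name{\mg} \geq f_i/\name{\mg}$, so $\xi/\mg$ violates the upper bound. \emph{Case B}: for every $i < \kappa$, $\xi(n) < f_i(n)$ for all but finitely many $n \in B$. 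Define $\xi^\prime \in ({^\omega \omega})^\vv$ by $\xi^\prime\rstr B = \xi\rstr B$ and $\xi^\prime\rstr(\omega \setminus B) = g_0\rstr(\omega \setminus B)$. Then $\xi^\prime \lls f_i$ for every $i < \kappa$: on $B$ by the Case B hypothesis, off $B$ since $g_0 \lls f_i$. Property (4) of Definition \ref{p-cut} yields $j_0 < \xp$ with $\xi^\prime \lls g_{j_0}$, so $\{ n \in B : \xi(n) < g_{j_0}(n) \}$ differs from $B$ by a finite set; letting $B^\prime$ be this set, $B^\prime$ refines $B$ and $B^\prime \Vdash \xi/\name{\mg} < g_{j_0}/\name{\mg}$, so $\xi/\mg$ violates the lower bound. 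Either way $B^\prime \in D_\xi$, establishing density; by genericity $\mg \cap D_\xi \neq \emptyset$ for each $\xi \in ({^\omega \omega})^\vv$, ruling out any filler.

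We conclude that the pair above represents a $(\xp, \kappa)$-cut in $X_\ma \in \ord(\cs)$; since $\xp + \kappa = \xp < \xt$, this gives $(\xp, \kappa) \in \mc(\cs, \xt)$, and by Claim \ref{m2} (symmetry, applied at $\kappa$, which lies below $\xp < \xt \leq \xt_\cs$) also $(\kappa, \xp) \in \mc(\cs, \xt)$, yielding the conclusion with $\kappa_1 := \kappa$. The heart of the proof, and the step I expect to require the most care, is Case B of the density argument: one must extend $\xi\rstr B$ to a total function on $\omega$ lying $\lls$-below every $f_i$ so that the peculiar-cut axiom (4), a statement about total functions in $\vv$, can actually be invoked, and setting $\xi^\prime = g_0$ off $B$ is precisely what achieves this.
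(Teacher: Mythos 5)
Your proposal is correct and takes essentially the same route as the paper's proof: obtain the $(\kappa,\xp)$-peculiar cut from Theorem \ref{t:885}, transfer it into the pseudofinite order $\prod_{n}[0,f_0(n)]/\mg$ of $\cn$, and use a forcing/genericity argument over conditions $B\in\bq$ to show that no $\xi/\mg$ with $\xi\in({^\omega\omega})^{\vv}$ fills the resulting pre-cut. Your Case B patching of $\xi$ by $g_0$ off $B$ just makes explicit the step the paper leaves implicit when it asserts that the mod-finite inequalities on $B$ ``contradict Definition \ref{p-cut}'', and your final reordering of the pair via Claim \ref{m2} is a cosmetic difference (the paper treats the pair $(\kappa_1,\xp)$ loosely, and all that is used downstream is $\mc(\cs,\xt_\cs)\neq\emptyset$).
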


\begin{rmk}
We use Claim \ref{c:x0} in the case where $\kappa_1 < \xp = \kappa_2$, but it holds generally.
\end{rmk}

\begin{proof}[Proof of Claim \ref{c:x0}.] 
Let $\cn$ be as above. 
We will prove that $\cn$ has a $(\kappa_1, \kappa_2)$-cut for some regular $\kappa_1, \kappa_2$ with $\aleph_1 \leq \kappa_1 < \kappa_2 =\xp$. 
The construction will show that this cut is in a pseudofinite linear order of $\cn$, in the sense of \ref{d:psf} above.  
As we have assumed 
that $\xp < \xt$, this will suffice to prove that $(\kappa_1, \xp) \in \mc(\cs, \xt)$.  

Let $(\langle g_i : i < \kappa_2 \rangle, \langle f_i : i < \kappa_1 \rangle)$ be as in Theorem \ref{t:885}, i.e.
a $(\kappa_1, \kappa_2)$-peculiar cut in ${^\omega \omega}$.  
Working in $\vv[\mg]$, consider in $\cn$ the set
\[ I = \prod_{n < \omega} [0, f_0(n)]/\mg \]
with the usual linear order, i.e. the order $< = <_I$ induced on the generic ultrapower by the factors. 
Note that the product is in $\vv$, though $\mg$ is not. Clearly $I$ is pseudofinite in $\cn$. (In other words, recalling \ref{d:psf} and \ref{q-cor}, 
there is a nontrivial $\ma = (I, <_I, \dots) = (X_\ma, <_\ma, \dots) \in \ord(\cs)$.) Moreover,
\begin{itemize}
\item$i < j < \kappa_1$ implies $g_i/\mg, g_j/\mg \in I = X_\ma$ and $g_i/\mg <_\ma g_j/\mg$
\item$i < j < \kappa_2$ implies $f_i/\mg, f_j/\mg \in I = X_\ma$ and $f_i/\mg <_\ma f_j/\mg$
\item$i < \kappa_2$, $j < \kappa_1$ implies $g_i/\mg <_\ma g_j/\mg$
\end{itemize}

So $(\langle g_i/\mg : i < \kappa_2 \rangle, \langle f_i/\mg : i < \kappa_1 \rangle)$ represents a pre-cut in $X_\ma$
and it will suffice to show that it represents a cut.

We carry out the remainder of the proof in $\vv$. Assume for a contradiction that the conclusion fails, i.e. our pre-cut 
is not a cut. Then this failure is forced by some $B \in \bq$, $B \in \mg$. That is, for some 
$h \in (^\omega \omega)^\vv$, $B \Vdash_\bq$ ``$g_i/\name{\mg} < h/\name{\mg} < f_j/\name{\mg}$'' for $i<\kappa_2, j<\kappa_1$.
Then $B$ is infinite, and $i < \kappa_2$ implies that $\{ n : g_i(n) < h(n) \} \supseteq^* B$, as otherwise (recalling the definition
of $\bq$) there is $B_1 \geq_\bq B$, $B_1 \Vdash g_i/\name{\mg} \geq h/\name{\mg}$.
Likewise, $j < \kappa_1$ implies that $\{ n : h(n) < f_j(n) \} \supseteq^* B$. 
This contradicts Definition \ref{p-cut} and so completes the proof.
\end{proof}

\begin{concl} \label{cor:x1} In Claim \ref{c:x0} we have shown that if 
$\xp < \xt$ $($in $\vv$$)$ then:
\begin{enumerate}
\item $\Vdash_\bq$ ``$\name{\cn}$ has a $(\kappa_1, \kappa_2)$-cut for some $\kappa_1 < \kappa_2 = \xp$''.
\item In $\vv[\mg]$ for some regular $\kappa_1, \kappa_2$ with $\aleph_1 \leq \kappa_1 < \kappa_2 = \xp$, 
$(\kappa_1, \kappa_2) \in \mc(\cs, \xt)$, \emph{thus} $(\kappa_1, \kappa_2) \in \mc(\cs, \xt_\cs)$.  
In particular, $\mc(\cs, \xt_\cs) \neq \emptyset$. 
\end{enumerate}
\end{concl}

\begin{proof}
So there is no confusion about the assumption, recall by $\ref{q-cor}$ that 
$\xp^\vv < \xt^\vv$ iff $\xp^{\vv[\mg]} < \xt^{\vv[\mg]}$.  Then both (1) and (2) are immediate from \ref{c:x0}, 
noting in the case of (2) that $\xt \leq \xt_\cs$ by \ref{c:x1}. 
\end{proof}

\noindent We now prove Theorem \ref{theorem:p-t}.

\begin{theorem} \label{theorem:p-t}
$\xp = \xt$. 
\end{theorem}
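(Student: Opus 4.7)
The plan is to derive a contradiction from the hypothesis $\xp < \xt$, using the framework developed in the preceding sections. First I would recall that by Fact \ref{x:fact} the inequality $\xp \leq \xt$ is already known, so it suffices to rule out the strict inequality. Assume for contradiction that $\xp < \xt$ in the ground model $\vv$; by Corollary \ref{q-cor}(2), since $\bq$ is $\xt$-complete, the same inequality holds in $\vv[\mg]$, and moreover the cardinals $\xp, \xt$ are absolute between $\vv$ and $\vv[\mg]$ for the relevant range (no new short sequences are added).

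Next I would pass to the generic ultrapower setting: work in $\vv[\mg]$ with $M = (\mch(\aleph_1), \in)$, $\cn = M^\omega/\mg$, and the cofinality spectrum problem $\cs = (M, \cn, Th(M), \Delta_{\mathrm{psf}})$ from Corollary \ref{q-cor}(1). The two key inputs are already proved in the excerpt. On the one hand, Claim \ref{c:x1} gives $\xt \leq \xt_\cs$: this is the treetops-transfer argument, which uses crucially that every $\theta < \xt \leq \xb$ sequence of functions is dominated by some $g$, allowing one to restrict the generic tree to a pseudofinite subtree $s_n$ at each coordinate and extract a bound via the tower property. On the other hand, Claim \ref{c:x0} (formalized in Conclusion \ref{cor:x1}) shows that under the hypothesis $\xp < \xt$, the peculiar cut of Shelah \cite{Sh:885} (Theorem \ref{t:885}) in $({^\omega \omega}, \lls)$ transfers along the diagonal into a genuine $(\kappa_1, \xp)$-cut in the pseudofinite linear order $\prod_n [0, f_0(n)]/\mg$ inside $\cn$; the no-interpolation clauses (4)--(5) of Definition \ref{p-cut} are precisely what prevents any potential interpolant $h$ forced by some $B \in \mg$ from existing. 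Combined with $\xp < \xt \leq \xt_\cs$, this yields $(\kappa_1, \xp) \in \mc(\cs, \xt_\cs)$, so in particular $\mc(\cs, \xt_\cs) \neq \emptyset$.

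Finally I would invoke the Main Theorem \ref{no-cuts} of the paper, applied to the cofinality spectrum problem $\cs$: it asserts $\mc(\cs, \xt_\cs) = \emptyset$, which directly contradicts the previous paragraph. Hence the assumption $\xp < \xt$ is untenable, and $\xp = \xt$.

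The main obstacle, which is entirely absorbed into the preceding machinery, is of course the Main Theorem \ref{no-cuts} itself; beyond that, the only nontrivial ingredient specific to this section is Claim \ref{c:x1}, whose subtlety lies in using the bounding number inequality $\xt \leq \xb$ to produce the finite ``fan'' $s_n = {^{g(n) \geq} g(n)}$ at each coordinate, so that the forced increasing sequence of names can be realized inside a pseudofinite subtree of ${^{\omega >}\omega}$ and thereby admit a pointwise upper bound chosen from a genuine pseudo-intersection (here exploiting $\theta < \xt$). Given these two claims together with Theorem \ref{no-cuts}, the deduction of $\xp = \xt$ is immediate.
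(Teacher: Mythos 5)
Your proposal is correct and follows essentially the same route as the paper: assume $\xp<\xt$, pass to the generic ultrapower $\cn=M^\omega/\mg$ viewed as the cofinality spectrum problem $\cs$ of Corollary \ref{q-cor}, use Claim \ref{c:x1} to get $\xt\leq\xt_\cs$ and Claim \ref{c:x0} (via the peculiar cut of Theorem \ref{t:885}) to get $\mc(\cs,\xt_\cs)\neq\emptyset$, contradicting Theorem \ref{no-cuts}. This is exactly the argument given in \S\ref{s:p-t}, with the same key lemmas in the same roles.
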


\begin{proof} 
It follows from the definitions that $\xp \leq \xt$. 
Suppose, in $\vv$, that $\xp < \xt$.  
Working now in $\vv[\mg]$, let $\cs$ be the cofinality spectrum problem from \ref{q-cor}.
By Conclusion \ref{c:x5}, which does not assume $\xp < \xt$, $\mc(\cs, \xt_\cs) = \emptyset$. 
By Conclusion \ref{cor:x1}(2), which does assume $\xp < \xt$, $\mc(\cs, \xt_\cs) \neq \emptyset$, a contradiction. 
So necessarily $\xp = \xt$, which completes the proof.
\end{proof}



\end{document}